\numberwithin{equation}{section}
\newtheorem{theo}{Theorem}
\newtheorem{lem}{Lemma}[section]
\newtheorem{prop}{Proposition}[section]
\newtheorem{rmk}{Remark}[section]
\newcommand{\R}{\mathbb{R}}
\date{}
\begin{document}
\title{Monostable pulled fronts and logarithmic drifts}
\author{Thomas Giletti \footnote{IECL, University of Lorraine, UMR 7502, B.P. 70239, 54506 Vandoeuvre-l\`{e}s-Nancy Cedex, France}}

\maketitle

\abstract{In this work we investigate the issue of logarithmic drifts in the position of the level sets of solutions of monostable reaction-diffusion equations, with respect to the traveling front with minimal speed. On the one hand, it is a celebrated result of Bramson that such a logarithmic drift occurs when the reaction is of the KPP (or sublinear) type. On the other hand, it is also known that this drift phenomenon disappears when the minimal front speed is nonlinearly determined. However, some monostable reaction-diffusion equations fall in neither of those cases and our aim is to fill that gap. We prove that a logarithmic drift always occurs when the speed is linearly determined, but surprisingly we find that the factor in front of the logarithmic term may be different from the KPP case.}

\section{Introduction}

In this paper, we consider a scalar reaction-diffusion equation
\begin{equation}\label{eq:main}
\partial_t u = \partial_{xx} u + f(u), \quad t >0, \ x \in \R ,
\end{equation}
where $f \in C^2$ is of the monostable type, i.e.
\begin{equation}\label{eq:monostable}
f (0 ) = f(1) = 0 \ , \quad f'(0) > 0 > f'(1), \quad  \mbox{ and } \quad f (s) >0 \ \mbox{ for $s \in (0,1)$. }
\end{equation}
Equation~\eqref{eq:main} will be supplemented together with the initial condition
\begin{equation}\label{eq:ini0}
u (t= 0, \cdot) \equiv u_0 \in C^0 (\R ; [0,1]),
\end{equation}
where $u_0$ satisfies that there exists $X_0 >0$ such that
\begin{equation}\label{eq:ini}
\inf_{x \leq - X_0} u_0 (x) >0 = \sup_{x \geq X_0 } u_0 (x) .
\end{equation}
Notice that, by a comparison principle, there holds that $u(t,x) \in (0,1)$ for all $t >0$ and $x \in \R$, so that it can be assumed without loss of generality that
$$\| f' \|_\infty  + \| f'' \|_\infty < + \infty.$$

Such an equation is well-known to admit a family of traveling waves~\cite{AW75}, whose minimal speed $c^*$ satisfies
$$c^* \geq 2 \sqrt{f'(0)},$$
and is also the spreading speed of solutions of \eqref{eq:main} with initial data~\eqref{eq:ini0}-\eqref{eq:ini}; we will recall below what is meant by spreading speed. In particular, equation~\eqref{eq:main} is often used as a model for invasion phenomena in physics, ecology and population dynamics.

The constant $2 \sqrt{f'(0)}$, or linear speed, arises from replacing $f(u)$ in \eqref{eq:main} by $f'(0) u$ its linearization at 0. Under the additional assumption that
\begin{equation}\label{eq:kpp}
u \mapsto \frac{f (u)}{u} \ \mbox{ is decreasing for } \ u >0, 
\end{equation}
which is usually refered to as the Fisher or KPP case, then it is actually known that $c^* = 2 \sqrt{f '(0)}$~\cite{AW75}. However, the converse is not true and, without assumption~\eqref{eq:kpp}, this equality may or may not hold.

A typical example is
\begin{equation}\label{eq:example}f_a (u) = 	u (1-u) (1 + au),\end{equation}
where $a \geq 0$. Notice that the KPP assumption~\eqref{eq:kpp} is satisfied if $a \leq 1$. On the other hand, the minimal wave speed is given by the following formula~\cite{HR75}:
$$ c_a^* 
= 
\left\{
\begin{array}{ll}
2  & \mbox{ if $0 \leq a \leq 2$}, \vspace{3pt}\\
\sqrt{ \frac{2}{a} } + \sqrt{\frac{a}{2} } & \mbox{ if a > 2.}
\end{array}
\right.
$$
In particular, in the interval $a \in (1,2]$, the minimal wave speed $c^*$ is equal to the linear speed $2 \sqrt{f'(0)}$ even though condition~\eqref{eq:kpp} does not hold. We highlight the fact that this interval is nontrivial, which suggests that this situation is not merely theoretical but may indeed occur in the applications. \\

Furthermore, as mentioned above, it is well-known~\cite{AW75} that $c^*$ is also the spreading speed of solutions of \eqref{eq:main} under initial condition~\eqref{eq:ini0}-\eqref{eq:ini}. More precisely, any such solution satisfies that:
$$\forall c < c^*, \quad \lim_{t \to +\infty} \sup_{x \leq ct } |u (t,x) - 1 | = 0,$$
$$\forall c > c^*, \quad \lim_{t \to +\infty} \sup_{x \geq ct } |u (t,x) | = 0.$$
In particular, any level set between 0 and 1 must be located around $c^* t$, up to some~$o(t)$ as $t \to +\infty$. 

Yet one may want to describe the large time behavior of solutions more precisely, whether by estimating more precisely the position of level sets or by investigating the convergence of the profile of the solution to that of a traveling wave. It turns out that the picture differs depending on whether $c^* > 2 \sqrt{ f'(0)}$ (i.e. \textit{`pushed'} case) or~$c^* = 2 \sqrt{ f'(0)}$ (i.e. \textit{`pulled'} case). We refer to~\cite{Stokes} for a background on the pushed/pulled terminology.

In the pushed case when $c^* > 2 \sqrt{ f'(0)}$, the solution converges to a single shift of the traveling wave $U_*$ with minimal speed~\cite{Rothe81,Uchiyama}, i.e. there exists some $X_\infty \in \R$ (depending on the initial data) such that
$$ u (t,x) - U_*(x - c^* t + X_\infty) \to 0,$$
as $t \to +\infty$, where the convergence is uniform with respect to $x \in \R$. This simultaneously answers both questions of level set position and profile convergence.

However, in the case when $c^* = 2 \sqrt{f'(0)}$ and \eqref{eq:kpp} is satisfied, then some logarithmic drift occurs and the solution no longer persists in the moving frame with speed $c^*$. More precisely, 
$$ u(t,x) - U_*\left( x-c^* t + \frac{3}{2 \sqrt{f'(0)} } \ln t + X_\infty \right) \to 0,$$
as $t \to +\infty$, for some $X_\infty \in \R$. This has first been proved by a probabilistic approach in~\cite{Bramson}, and more recently with PDE tools in~\cite{HNRR}. Let us also mention recent developments in the periodic heterogeneous~\cite{HNRR_per} and nonlocal~\cite{bouin,Graham} cases, as well as for the Burgers-Fisher equation~\cite{JingAn,LeachHanac}.

While the above two results are now rather well-known, they leave aside the case when $c^* = 2 \sqrt{ f'(0)}$ yet \eqref{eq:kpp} does not hold. As we pointed out in the particular case when $f$ is given by \eqref{eq:example}, the set of reaction terms leading to this situation is not trivial and therefore it should also occur in the applications. It had only been shown~\cite{Rothe78,Uchiyama} that, for some special class of `steep enough' initial data, 
$$ u(t,x) - U_*\left( x-c^* t + m(t) \right) \to 0,$$
as $t \to +\infty$, where $m(t) = o(t)$ as $t \to +\infty$, and the logarithmic drift was (formally) studied in~\cite{Ebert,LeachNeedham}. More recently, local convergence and drift results were obtained in~\cite{AveryScheel}, under some spectral stability assumption on the traveling wave which is related to what we will call the slow decay case in the next sections.

Therefore, the goal of this paper is to investigate the drift phenomenon in the general pulled case $c^* = 2 \sqrt{f'(0)}$, i.e. without making the KPP assumption. We will see that a logarithmic drift still appears, but may involve a different factor.

\section{Main results}

Let us first recall that a traveling wave solution of \eqref{eq:main} is an entire in time solution of the form
$$u(t,x) = U (x-ct),$$
where $c \in \R$ and $U$ satisfies
\begin{equation}\label{TW_ineq}
U (-\infty) = 1> U (\cdot) > U (+\infty) = 0.
\end{equation}
As we outlined in the Introduction, such a traveling wave solution exists if and only if $c \geq c^*$, for some $c^* \geq 2 \sqrt{f'(0)}$. Moreover, for each $c \geq c^*$, the traveling wave solution is unique up to shift and the profile function $U$ is decreasing~\cite{AW75}. When $c= c^*$ we denote it by $U_*$.

Since the pushed case when $c^* > 2 \sqrt{f'(0)}$ is well-understood, throughout this work we will place ourselves in the pulled case where
\begin{equation}\label{eq:pulled}
c^* = 2 \sqrt{f'(0)} .
\end{equation}
We also recall that $U_* (z)$ satisfies the following asymptotics as $z \to +\infty$:
\begin{equation}\label{eq:asymp}
U_* (z) =   (B z + A) e^{- \sqrt{f'(0)} z} + O(e^{-(2-\eta) \sqrt{f'(0)} z}),
\end{equation}
%\begin{equation}\label{eq:asymp_deriv}
%U_* ' (z) =  B e^{-\sqrt{f'(0)}z}- \sqrt{f'(0)}  (B z + A) e^{- \sqrt{f'(0)} z} + O(e^{-(2-\eta) \sqrt{f'(0)} z}),
%\end{equation}
where $\eta >0$ can be chosen arbitrarily small thanks to the $C^2$-regularity of~$f$~\cite{CoddingtonLevinson,Pazy}. 
We point out that in our arguments we will fully use~\eqref{eq:asymp} and in particular the order of the remainder. Still we expect that our main results should hold true if we only assume~$f$ to be $C^{1,\alpha}$, up to some appropriate changes in our proofs.
% for an ODE it can be more or less computed by writing a solution of
%$$p' = Ap + \eta$$
%as
%$$p_0 (x) + e^{Ax} \int_0^x \eta (s) e^{-As} ds.$$

Moreover, we have that either $B>0$, or $B=0$ and $A >0$. We refer to the former case as \textit{`slow decay'} and to the latter as \textit{`fast decay'}. As predicted in~\cite{Ebert,LeachNeedham}, the logarithmic drift will be different in both cases. We also mention the recent work~\cite{BouinHenderson}, where Bouin and Henderson investigated a situation where the reaction term is singular, which results in different asymptotics of the minimal traveling wave and the logarithmic drift is then increased compared to the classical KPP case.\\

We are now in a position to state our main result, where the above asymptotics play a crucial role:
\begin{theo}\label{th:main}
Assume that $u$ solves \eqref{eq:main} with initial condition~\eqref{eq:ini0}-\eqref{eq:ini}, $f$ satisfies \eqref{eq:monostable} and~\eqref{eq:pulled} holds. Define also $B$ from \eqref{eq:asymp}, $\lambda \in (0,1)$, and the level set
$$E_\lambda (t) := \{ x \, | \ u(t,x) = \lambda \}.$$
\begin{itemize}
%\item[$(i)$] If $B >0$, then for any $\varepsilon >0$, there exist $X >0$ and $T>0$ such that
%$$E_\lambda (t) \subset \left[  c^* t - \frac{3}{2 \sqrt{f'(0)} } \ln t - X , c^* t - \frac{3-\varepsilon}{2 \sqrt{f'(0)} } \ln t \right],$$
%for all $t > T$.
\item[$(i)$] If $B >0$, then there exist $X >0$ and $T>0$ such that
$$E_\lambda (t) \subset \left[  c^* t - \frac{3}{2 \sqrt{f'(0)} } \ln t - X , c^* t - \frac{3}{2 \sqrt{f'(0)}  } \ln t + X \right],$$
for all $t > T$.
\item[$(ii)$] If $B = 0$, then for any $\varepsilon >0$, there exists $T>0$ such that
$$E_\lambda (t) \subset \left[  c^* t - \frac{1+ \varepsilon}{2 \sqrt{f'(0)} } \ln t  , c^* t - \frac{1-\varepsilon}{2 \sqrt{f'(0)} } \ln t \right],$$
for all $t > T$.
\end{itemize}
\end{theo}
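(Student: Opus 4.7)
The natural approach is to pass into the critical moving frame by setting $z = x - c^* t$ and $v(t,z) := e^{\phi z}\, u(t, z + c^* t)$ with $\phi := \sqrt{f'(0)}$, so that $v$ solves a heat-type equation whose linearization at $v = 0$ is $\partial_t v = \partial_{zz} v - 2\phi \partial_z v$. In these variables the critical wave becomes $V_*(z) := e^{\phi z} U_*(z) \sim Bz + A$ as $z \to +\infty$ by \eqref{eq:asymp}: linearly unbounded in the slow decay case and bounded in the fast decay case. This dichotomy is what should drive the two distinct logarithmic constants, as it dictates whether the tail of the front must match the Bramson half-line heat kernel profile $z \, e^{-z^2/(4t)}/t^{3/2}$ (yielding the coefficient $\frac{3}{2\phi}$) or the less singular profile $e^{-z^2/(4t)}/t^{1/2}$ (yielding $\frac{1}{2\phi}$).

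For the upper bound on $E_\lambda(t)$ I would construct a supersolution of the form $\bar u(t,x) = U_*(x - c^* t + m(t) + X) + \psi(t,x)$, where $m(t)$ is the candidate logarithmic drift and $\psi$ is a small non-negative correction engineered to absorb the residual $-\dot{m}(t)\, U_*'$ that appears upon differentiating a shifted wave. Writing out $\partial_t \bar u - \partial_{xx}\bar u - f(\bar u)$ and using the asymptotics \eqref{eq:asymp}, the balance between $-\dot{m}(t)\, U_*'$ and the nonlinear correction forces $m(t) = \frac{3}{2\phi} \ln t + O(1)$ in the slow decay case and $m(t) = \frac{1+\varepsilon}{2\phi}\ln t$ in the fast decay case, the full strength of the $e^{-(2-\eta)\phi z}$ remainder in~\eqref{eq:asymp} being needed to close the differential inequality. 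For the lower bound one runs the analogous computation with a subsolution of the same shape (minus a correction); the initial data for comparison is provided by the standard fact that on regions moving at speed strictly less than $c^*$ the solution converges uniformly to $1$, so a compactly supported bump at level $1-\delta$ can be used to initiate the subsolution comparison.

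The main difficulty I anticipate is the fast decay case. Because $V_*$ there tends to a positive constant rather than growing linearly, the large-time dynamics of $v$ no longer match the generic $Bz + A$ heat-equation attractor: the critical wave lies in the degenerate $B=0$ subspace of the linearization at $z = +\infty$. Consequently the matching between the wave tail and the heat kernel, which pins the precise constant $\frac{3}{2\phi}$ in the KPP-like situation, only controls the drift up to an $o(\ln t)$ term, and identifying a sharp constant would seem to require finer information on $f$ than $C^2$ regularity. This is precisely why statement $(ii)$ settles for an $\varepsilon$-window rather than a sharp $O(1)$ bound, and it is the technical reason the two cases must be treated by parallel but genuinely distinct constructions.
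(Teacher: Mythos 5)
Your high-level reading of the problem is accurate: the dichotomy $B>0$ versus $B=0$ is encoded in whether $V_*(z) := e^{\phi z}U_*(z)$ grows linearly or tends to a constant, and the coefficient in the logarithmic drift is determined by matching the front tail against a heat-kernel profile ($z\,e^{-z^2/4t}/t^{3/2}$ versus $e^{-z^2/4t}/t^{1/2}$). This is the same mechanism the paper exploits, via the Dirichlet first eigenfunction $y e^{-y^2/8}$ in the slow-decay case and the Neumann first eigenfunction $e^{-y^2/8}$ in the fast-decay case of the self-similar operator $\mathcal{L}w = \partial_{yy}w - (y^2/16 - 3/4)w$.

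However, the proposed supersolution $\bar u(t,x) = U_*(x - c^*t + m(t) + X) + \psi(t,x)$ with $\psi$ a ``small non-negative correction'' conceals the central technical obstruction and would not close. Since $m'(t)>0$ and $U_*'<0$, the shifted wave alone satisfies $\partial_t - \partial_{xx} - f = m'(t)U_*' < 0$, i.e. it is a \emph{sub}solution, and the sign deficit must be compensated. An additive correction $\psi \geq 0$ cannot do this: on the near-1 side it must fight against the linear instability of $U_*$ generated by translation invariance, and on the tail side the forcing $m'(t)U_*'(z) \sim -\tfrac{c}{t}\,z\,e^{-z}$ decays only like $1/t$ over the whole range $z \lesssim \sqrt{t}$, while at the same time the supersolution must be brought down to heat-kernel decay $e^{-z}e^{-z^2/4t}/\sqrt{t}$ for large $z$ so the comparison with compactly supported (in the right tail) data can start. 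The paper resolves this by a qualitatively different two-piece construction: on the left, $U_*$ is replaced by the solution $U_r(\cdot;t)$ of the \emph{subcritical-speed} ODE $U_r'' + (2 - 2r/t)U_r' + f(U_r) = 0$, which absorbs the $m'(t)U_*'$ term by design but then crosses zero at a finite $Z_0(t)$; this is controlled via a quantitative ODE stability lemma (Lemma~\ref{lem:approx}) giving $|U_r - U_*| \leq (\varepsilon e^{-z})^{1-\eta}$. On the right it is glued to a genuinely self-similar heat-kernel supersolution built from Hermite eigenfunctions, and the drift coefficient is pinned by the ordering inequalities at the gluing interface near $2t + \ln t$, not by a local balance inside the front region. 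Your sketch also underestimates the subsolution side: one cannot initiate the comparison from a compactly supported bump because the subsolution $\underline{u}$ is positive on all of $\R$; the paper instead extracts a Gaussian lower bound on $u(t_0,\cdot)$ from the heat kernel and matches it against the explicit decay of $\underline{u}(T,\cdot)$ recorded in Proposition~\ref{fast_subsol}$(ii)$. Without the piecewise structure, the perturbed ODE, and the explicit matching, the argument as written has a genuine gap.
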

Statement~$(i)$ includes the KPP case and we recover the same logarithmic drift; we again cite the parallel work~\cite{AveryScheel} where a similar result was proved by a spectral approach, which unfortunately fails in the fast decay case. This is the most typical situation in the sense that it is stable up to small $C^1$-perturbations of the reaction term. For instance, in the example of $f_a (u) = u (1-u) (1+au)$, there holds $B>0$ for any $a < 2$ (recall that the KPP assumption is satisfied only when $a \leq1$); this can be checked by phase plane analysis. In the special case when $B=0$, we find some new phenomenon where the logarithmic drift still appears but is less than in the KPP case. It is straightforward to check that this new situation does occur, for instance by taking $a=2$ and the reaction term $f_2 (u) = u (1-u) (1+2u)$, and noticing that the traveling wave with minimal speed is then given by $U_* (z) = (1+e^z)^{-1}$ whose decay is fast. However, in this example there is no other value of the parameter $a$ which falls into this fast decay case. Recalling that the speed is nonlinearly determined for $a >2$, this suggests that the fast decay case appears only as a sharp intermediate situation between a KPP-like logarithmic drift and no drift.

Let us briefly point out that, in the KPP case, the correct logarithmic drift is obtained by a truncation procedure in which the Cauchy problem \eqref{eq:main} is approached by the linearized problem but with a moving Dirichlet boundary condition. More precisely, it is approximated by the equation
$$\partial_t u = \partial_{xx} u + f' (0) u,$$
together with
$$u (t, 2t - r \ln t ) = 0.$$
The resulting solution remains bounded from both above and below in large time if and only if $r = \frac{3}{2}$, which turns out to be the correct logarithmic drift in this case. The Dirichlet boundary condition basically comes from the fact that it is satisfied by (a shift of) the function $(B z + A) e^{- \sqrt{f'(0)} z }$ (when $B >0$), which is also the asymptotic profile of the traveling wave with minimal speed. In the case when $B= 0$ and the front has the faster decay $Ae^{-\sqrt{f'(0)} z}$ as $z \to -\infty$, a different boundary condition should be imposed and this is, in short, the reason why a different logarithmic drift occurs.

In the slow decay case, our estimate on the position of level sets is sharp, and proceeding as outlined in~\cite{HNRR}, one may recover the large time convergence of the solution to a family of shifted traveling waves with minimal speed. Unfortunately, in the fast decay case we were only able to locate the level sets up to the order $o (\ln t)$. Very roughly, this seems to be related to the fact that we have one less parameter to play with in the asymptotic profile $A e^{-\sqrt{f'(0)} z}$ of the traveling wave, when matching it with the approximated problem sketched above. Still it is expected that our estimate should be valid up to a~$O(1)$, and at least the distance between level sets should remain at a bounded distance from each other as $t \to +\infty$.

\paragraph{Plan of the paper.} In order to exhibit the new phenomenon and give some of the key ideas, we start in Section~\ref{sec:linear} with a special case when the front has fast decay under the additional assumption that $f$ is linear around 0, i.e. $f(u) = f'(0) u$ in an open neighborhood of 0. In this situation, a short argument provides a lower estimate on the position of level sets, and shows that the drift cannot be the same as in the KPP case.

In the last two sections, we turn to the actual proof of our main Theorem~\ref{th:main} in the general pulled case. First, in Section~\ref{sec:subsuper}, we construct some sub and supersolutions in both the fast decay and the slow decay cases, and whose level sets satisfy the wanted asymptotics. Then, in Section~\ref{sec:comparison} and using these sub and supersolutions, we conclude the proof of Theorem~\ref{th:main}.

Before we proceed, we point out that, in order to make the computations simpler and without loss of generality (up to some rescaling), all the proofs will be performed under the additional assumption that
$$f '(0) = 1,$$
so that also $c^* = 2$ and
$$U_* '' + 2 U_* ' + f(U_*)=0.$$

\section{A rough argument in a simple case}\label{sec:linear}

For the sake of illustrating the quite intricate argument used in the sequel, we briefly discuss a special case when, for some $\delta >0$,
\begin{equation}\label{eq:linear}
\forall s \in [0, \delta], \quad f(s) = f'(0) s = s . %\ \mbox{ and } \ f(1-s) = - f'(1) s = s.
\end{equation}
Our point is also to provide an example where the lower bound on the level set in statement~$(ii)$ of Theorem~\ref{th:main} can be verified, in a more straightforward way which avoids the technicalities of the following sections.

%Here we fixed not only $f'(0) = 1$ but also $f'(1) = -1$ in order to make our computations simpler. However, the important assumption is the fact that $f$ is affine on some neighborhoods of 0 and 1, and any other positive $f'(0)$ and negative $f'(1)$ could be considered.

Here we place ourselves in the fast decay case, which under condition~\eqref{eq:linear} implies that
$$U_* (z) = A e^{-z},$$
for some $A >0$ on a right half-line. Up to some shift, we can assume without loss of generality that
\begin{equation}\label{TW_shift_norm}
U_*(z) = \delta e^{-z}, \quad \forall z \geq 0 .
\end{equation}
We will sketch a short argument to show that in this case the logarithmic drift is at most $\frac{1}{2} \ln t$. This already highlights a difference between the general monostable case and the particular KPP case studied in~\cite{HNRR}. 

More precisely, we present a short construction of a subsolution. This construction is made easier by the linear assumption and the resulting exact asymptotics for $U_*$, which allow us to compare the steepness of various solutions. A lower estimate on the position of the level sets follows from a comparison principle argument; here we omit this part of the proof, since it is the same as in the general case tackled in later sections.\\

Let us start with a formal argument. When the solution is less than $\delta$ from~\eqref{eq:linear}, it satisfies the linear equation
$$\partial_t u= \partial_{xx} u + u,$$
whose fundamental solution writes
$$U_0 (t,x) :=  \delta \frac{e^t}{\sqrt{t}} \times e^{-\frac{x^2}{4t}}.$$
Here we added the factor $\delta$ for convenience. Notice indeed that $U_0 (t,x) \leq \delta$ if and only if
$$x \geq 2t -h (t),$$
where
$$h(t) = 2 t - \sqrt{ 4 t^2 - 2 t \ln (t)}.$$
In particular, as $t \to +\infty$,
$$h(t) \sim \frac{1}{2} \ln (t) + o(1).$$
On the left of the point $2 t - h(t)$, the function~$U_0$ is larger than $\delta$ and we instead expect the solution to approach a shift of the traveling front with minimal speed $U_*$. In order to ensure continuity at the point $x=  2t - h(t)$ and due to \eqref{TW_shift_norm}, more precisely we expect the solution to approach
$$U_* (x - 2t + h(t)),$$
so that $U_* (0) = U_0 (t, 2t -h(t))$. From the above formula for $h(t)$ this provides the expected logarithmic drift on the position of level sets.

To make it more rigorous, we may check that the function
$$\underline{U} (t,x) := \left\{ 
\begin{array}{ll}
U_*(x - 2t + h(t)) & \mbox{ if } x \leq 2t - h(t) , \vspace{3pt}\\
U_0 (t,x) & \mbox{ if } x > 2 t - h(t).
\end{array}
\right.
$$
is a (generalized) subsolution of~\eqref{eq:main} for positive times.

Indeed, by construction, $U_0 (t,x)$ is a solution of \eqref{eq:main} for $t >0$ and $x > 2t - h(t)$, where it is less than $\delta$. The fact that $U_* (x -2t + h(t))$ is a subsolution of \eqref{eq:main} easily follows from the fact that $U_* ' < 0$ and  $h ' (t) > 0$ for all $t >1$. Therefore, the function~$\underline{U}$ is a generalized subsolution for $t >1$ if
$$U_*'  (0) < \partial_x U_0 (t, 2t -h(t)).$$
The left-hand term is equal to $-\delta$, and the right-hand term is given by
$$\delta \frac{e^{t}}{\sqrt{t}} \times \frac{-(2t -h(t))}{2t} e^{-\frac{(2t - h(t))^2}{4t} }= - \delta + \frac{ h(t)}{2t} \delta > - \delta, $$
where the last inequality holds for $t>1$. 

It follows that, as announced, $\underline{U}$ is a subsolution. Although it is positive on the whole real line, it has the same decay as solutions of \eqref{eq:main} with initial data satisfying~\eqref{eq:ini}; hence it is natural to expect that a comparison principle can be applied. Furthermore, the level sets of $\underline{U}$ are located around $2t - h(t)$, which is precisely the expected logarithmic drift in the fast decay case~$B=0$.

\section{The general monostable case: sub and supersolutions}\label{sec:subsuper}

We now turn to the construction of sub and supersolutions in the general case when $f$ satisfies~\eqref{eq:monostable}.

\subsection{The fast decay case: a subsolution}\label{sec:fast_sub}

In this section we assume that $U_*$ has fast decay in the sense that
$$U_* (z) = e^{-z} + O (e^{-(2-\eta) z}),$$
as $z \to +\infty$, for any $\eta >0$. Notice that we made the constant $A$ equal to 1, which is possible up to some spatial shift.

Here we will let
$$r > \frac{1}{2},$$
to be chosen arbitrarily close to $\frac{1}{2}$. 
Our goal is to prove the following:
\begin{prop}\label{fast_subsol}
For any $r \in \left( \frac{1}{2}, 1\right)$, there exist $T>0$ and $\underline{u} (t,x)$ a subsolution of~\eqref{eq:main} on $[T,+\infty) \times \R$ such that:
\begin{itemize}
\item[$(i)$] for any $\lambda \in (0,1)$, there exists $X_1 >0$ such that, for any $t$ large enough,
$$\forall x < 2 t - r \ln t - X_1, \quad \underline{u} (t,x) > \lambda,$$
$$\forall x > 2 t - r \ln t + X_1, \quad \underline{u} (t,x) < \lambda ;$$
%the level set $\underline{E}_\lambda (t)$ of $\underline{u}$ is located around $2t - r \ln t$, in the sense that there exists $X_1 >0$ such that
%$$ \underline{E}_\lambda (t) \subset \left[ 2t - r \ln t - X_1, 2t - r \ln t + X_1 \right], $$
%for any $t$ large enough; 
\item[$(ii)$] it satisfies the following properties at time $T$:
$$\sup_{x \in \mathbb{R}} \underline{u} (T,x) < 1 , $$
$$\exists X_2 >0, \quad \limsup_{x \to +\infty} \frac{(x- X_2) \underline{u} (T,x)}{e^{-(x- X_2)^2 /4T}} =0.$$
\end{itemize}
\end{prop}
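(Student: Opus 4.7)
The plan is to extend the gluing construction sketched in Section~\ref{sec:linear} to the general nonlinear setting. I would set $\xi(t) := 2 t - r \ln t$ and fix a large parameter $Y_0 > 0$, chosen so that $U_*(Y_0)$ is small. On $\{x \leq \xi(t)\}$ I would take $\underline{u}(t,x) := U_*(x - \xi(t) + Y_0)$, and on $\{x \geq \xi(t)\}$ I would paste a Gaussian-modulated exponential modeled on the fundamental solution of the linearized equation used in Section~\ref{sec:linear}. The two pieces should match at the interface $x = \xi(t)$, and the resulting $\underline u$ should decay like a Gaussian as $x \to +\infty$ --- the crucial new feature compared with the left piece alone, whose decay is merely exponential. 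The strict inequality $r > 1/2$ (rather than equality) provides the small $1/t$ slack needed to absorb the quadratic correction $f(s) - s = O(s^2)$ near zero, which was absent in Section~\ref{sec:linear}.

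Verifying that the left piece is a classical subsolution is a one-line computation: since $\xi'(t) = 2 - r/t$ and $U_*'' + 2 U_*' + f(U_*) = 0$, one finds
$$ \partial_t \underline u - \partial_{xx} \underline u - f(\underline u) = \frac{r}{t}\, U_*'(x - \xi(t) + Y_0) \;<\; 0.$$
For the right piece I would try, writing $y = x - \xi(t)$,
$$ V(t, x) = U_*(Y_0)\, e^{-y}\, e^{-y^2/(4 t)},$$
possibly multiplied by a mild time-dependent correction. A direct computation gives
$$ \partial_t V - \partial_{xx} V - V = V \left[\tfrac{1 - 2 r}{2 t} - \tfrac{r\, y}{2 t^2}\right],$$
which is strictly negative for $r > 1/2$, $y \geq 0$ and $t$ large. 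The main difficulty is that $V(t, \xi(t)) = U_*(Y_0)$ remains of fixed positive order as $t \to +\infty$, so the nonlinear error $f(V) - V = O(V^2)$ is not automatically absorbed by the $1/t$ margin near the interface. I would resolve this by first choosing $Y_0$ large enough that $|f(s) - s| / s \leq \sigma$ on $[0, U_*(Y_0)]$ for any prescribed $\sigma > 0$, and then multiplying $V$ by a slowly decaying factor (e.g., of the form $1 - c_1 / t^\beta$ for some $\beta \in (0,1)$), whose logarithmic derivative supplies the additional negative contribution required uniformly in $y \geq 0$.

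Once both pieces are subsolutions of the nonlinear equation, the only remaining point is the generalized-subsolution property at the interface, which reduces to the one-sided inequality $U_*'(Y_0) \leq \partial_x V(t, \xi(t))$. Both sides equal $-U_*(Y_0)$ at leading order by~\eqref{eq:asymp}, and the residual $O(e^{-(2-\eta) Y_0})$ discrepancy in $U_*'(Y_0)$ can be absorbed by the same $t$-dependent correction used above, for $Y_0$ and $t$ sufficiently large. Items~$(i)$ and~$(ii)$ then follow from the explicit form of $\underline u$: its level sets at any $\lambda \in (0,1)$ sit in $\xi(t) + [-X_1, X_1]$ for some $X_1 > 0$, and at $t = T$ the function $\underline u(T, \cdot)$ is bounded strictly below $1$ everywhere and decays at least like $e^{-(x - \xi(T))^2/(4 T)}$ as $x \to +\infty$, which is strictly faster than what is required in~$(ii)$.
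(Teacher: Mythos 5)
Your plan glues the shifted front $U_*(x-\xi(t)+Y_0)$ on the left to a Gaussian-modulated exponential $V$ on the right \emph{at the level-set location} $x=\xi(t)$. That matching point is the crux of the difficulty, and it is precisely what the paper avoids: in the paper's construction (Lemmas~\ref{lem:ansatz1}--\ref{lem:ansatz2}), the two pieces $\underline u_1$ and $\underline u_2$ are glued far to the right, in a neighborhood of $x=2t+\ln t$, where both pieces are already of size $O(t^{-3/2-\varepsilon})$. Because there the solution is algebraically small, the nonlinear error $f(\underline u_2)-\underline u_2=O(\underline u_2^2)$ is negligible compared to the $O(t^{-3/2})\underline u_2$ margin that the factor $1+C/\sqrt t$ provides. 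In your construction, by contrast, $V(t,\xi(t))=U_*(Y_0)$ is a fixed positive constant, so $f(V)-V=O(U_*(Y_0)^2)$ is of order $\Theta(V)$ at the interface, \emph{uniformly in time}. Your linear margin $\frac{2r-1}{2t}V$ vanishes, so no matter how small you make $\sigma=K\,U_*(Y_0)$ by enlarging $Y_0$, the linear margin drops below it for $t$ large.

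Your proposed rescue --- multiplying by $1-c_1/t^\beta$ --- does not repair this. That factor is \emph{increasing} in $t$, so its logarithmic derivative $c_1\beta t^{-\beta-1}/(1-c_1 t^{-\beta})$ is positive, i.e.\ it adds a contribution of the \emph{wrong} sign to the subsolution inequality. Even if you replace it by a decreasing factor $1+c_1/t^\beta$ (the paper's choice), its logarithmic derivative is $O(t^{-\beta-1})\to0$, which still cannot dominate the constant-order nonlinear error at the interface. Moreover, a fixed extra steepening $e^{-\mu y}$ (which \emph{would} give a constant-order margin $-\mu^2 V$) is ruled out by the kink condition: the generalized-subsolution inequality requires $U_*'(Y_0)\leq\partial_x V(t,\xi(t))$, i.e.\ $V$ must be \emph{no steeper} than $U_*$ at the matching point, whereas absorbing the quadratic error pushes you to make $V$ \emph{steeper}. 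Nothing in your proposal resolves this tension; the paper sidesteps it entirely by matching in the far tail, and by building $\underline u_2$ from a Dirichlet $+$ Neumann combination $(z/t+t^{-\varepsilon})e^{-z}e^{-z^2/4t}$ in self-similar variables, whose slope at the matching point can be tuned against the shift $h(t)$.

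Finally, a smaller but real gap: with your left piece $U_*(x-\xi(T)+Y_0)$ alone, $\sup_x\underline u(T,x)=1$ (attained in the limit $x\to-\infty$), not $<1$ as required in statement~$(ii)$. The paper handles this by subtracting the exponentially decaying cutoff term $\chi(\cdot)e^{f'(1)t/2}$ from the front (Lemma~\ref{lem:ansatz1}), so that $\underline u_1\to 1-e^{f'(1)t/2}<1$ at $-\infty$. You would need an analogous correction.
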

The construction of this subsolution will consist in gluing together two different functions, roughly dealing respectively with the nonlinear part (away from 0) and the linear part (close to 0) of \eqref{eq:main}. First, our subsolution will consist on a left half-line of a (perturbation of a) shift of the traveling front with minimal speed, so that its level sets are located around $2t -  r \ln t$. Indeed, one may check that
$$U_*(x - 2t + r \ln t - h(t) ) ,$$
is a subsolution of \eqref{eq:main} at large times if 
\begin{equation}\label{fast_subsol1}
h(t) =O(1), \quad  h'(t) = o \left( \frac{1}{t} \right) \ \mbox{ as } \ t \to +\infty,
\end{equation}
thanks to the fact that $U_* ' < 0$ and $U_* '' + 2 U_* ' + f(U_*) =0$.

Because $u(t,x)$ the solution of the Cauchy problem \eqref{eq:main} with initial data~\eqref{eq:ini0}-\eqref{eq:ini} has a faster decay than $U_*$ as $x \to +\infty$, it is not possible to use a comparison principle. This is why another subsolution will be necessary on the far right. Another issue is the fact that $U_*$ goes to 1 as $x \to - \infty$, which may not be the case of the solution $u(t,x)$. The latter difficulty can be solved thanks to the linear stability of 1 and by using a similar construction as in~\cite{FifeMcLeod}. This is the purpose of the following lemma.
\begin{lem}\label{lem:ansatz1}
Assume that $h(t) $ satisfies~\eqref{fast_subsol1}.

Then there exist a nontrivial and nonincreasing function~$\chi$, whose support is included in $(-\infty,0)$ and which satisfies $\chi (-\infty) =1$, and $T>0$ such that
$$\underline{u}_1 (t,x) := U_* (x - 2t + r \ln t - h(t) ) - \chi (x- 2t + r \ln t - h(t)) e^{ \frac{f'(1)}{2} t}, $$
is a subsolution of \eqref{eq:main} on $[T,+\infty) \times \R$.
\end{lem}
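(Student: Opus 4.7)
The plan is to substitute $\underline{u}_1$ into the parabolic operator $\mathcal{L} v := \partial_t v - \partial_{xx} v - f(v)$ working in the moving frame $z := x - 2t + r\ln t - h(t)$, to use the traveling-wave identity $U_*'' + 2U_*' + f(U_*) = 0$, and to Taylor-expand $f(\underline u_1)$ around $U_*$. Writing $\alpha := f'(1)/2 < 0$, a direct calculation organizes the result as
\begin{equation*}
\mathcal{L} \underline{u}_1 \;=\; e^{\alpha t} \Bigl\{ \chi'' + 2\chi' + \bigl(f'(U_*(z)) - \alpha\bigr)\, \chi - \chi'\bigl(\tfrac{r}{t} - h'(t)\bigr) \Bigr\} \;+\; U_*'(z) \bigl(\tfrac{r}{t} - h'(t)\bigr) \;+\; O\!\bigl(e^{2\alpha t}\bigr),
\end{equation*}
where the Taylor remainder is uniform in $z$ thanks to $\|f''\|_\infty < +\infty$. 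Two structural features then govern the argument. First, since $U_*' < 0$, $r > 0$, and $h'(t) = o(1/t)$ by \eqref{fast_subsol1}, the ``help'' term $U_*'(z)(r/t - h'(t))$ is nonpositive and of size $O(1/t)$ for $t$ large. Second, the bracketed expression is multiplied by the exponentially small factor $e^{\alpha t}$.

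I would then choose $\chi$ piecewise: pick a large $M > 0$, set $\chi \equiv 1$ on $(-\infty, -M]$ and $\chi \equiv 0$ on $[0, +\infty)$, and interpolate on $[-M, 0]$ by a fixed smooth nonincreasing function with $\chi(0) = 0$. On $(-\infty, -M]$ the bracket simplifies to $f'(U_*(z)) - \alpha$; since $U_*(z) \to 1$ as $z \to -\infty$ and $f'$ is continuous, taking $M$ large enough forces this quantity to be $\leq \alpha/2 < 0$, whence $\mathcal L \underline u_1 \leq 0$ in this region for all $t$ large. On the compact slab $[-M, 0]$ the bracket is merely bounded, so the first summand is $O(e^{\alpha t})$, whereas $|U_*'|$ admits a positive lower bound $m > 0$ on $[-M,0]$; hence the help term is at most $-mr/(2t)$ for $t$ large and dominates the exponentially small contribution. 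On $[0, +\infty)$ we have $\underline u_1 = U_*(z)$, and $\mathcal L \underline u_1$ reduces to the nonpositive help term.

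At the interface $z = 0$ the function $\underline{u}_1(t, \cdot)$ is continuous, and since $\chi$ is nonincreasing its one-sided spatial derivatives satisfy
\begin{equation*}
\partial_x \underline{u}_1\big|_{z = 0^-} = U_*'(0) - \chi'(0^-)\, e^{\alpha t} \;\geq\; U_*'(0) = \partial_x \underline{u}_1\big|_{z = 0^+},
\end{equation*}
which is precisely the kink condition ensuring that gluing two classical subsolutions with matching values yields a generalized subsolution. Choosing $T$ large enough to absorb all the ``large $t$'' requirements above then completes the construction.

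The main obstacle, in my view, is the compact transition slab $[-M, 0]$: there the good negative contribution from the leading order term, coming from $\chi \equiv 1$ and $U_* \to 1$ near $-\infty$, is lost, while $\chi'$ and $\chi''$ need not be small. The rescue is a separation of scales, namely that the exponentially small factor $e^{\alpha t}$ is beaten by the $O(1/t)$ help term $U_*'(z)(r/t - h'(t))$; this comparison crucially relies on the strict linear stability $f'(1) < 0$ of the equilibrium $u = 1$ built into the monostable structure \eqref{eq:monostable}.
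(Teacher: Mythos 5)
Your computation and the resulting three-region case analysis are essentially the paper's own argument (the paper uses a mean-value bound for $f(U_*)-f(\underline{u}_1)$ in place of your Taylor expansion, but the logic is identical: linear stability near $u=1$ kills the far-left region, the drift term $(r/t - h'(t))U_*'$ alone handles the far-right region, and a lower bound $|U_*'|\geq m>0$ on the compact transition slab lets the $O(1/t)$ drift beat the $O(e^{\alpha t})$ residue).

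However, your interface discussion at $z=0$ contains a sign error. You assert that $\partial_x\underline{u}_1|_{z=0^-}\geq\partial_x\underline{u}_1|_{z=0^+}$ is the correct matching condition for a generalized \emph{sub}solution, but it is the opposite: a function with a derivative jump is a distributional subsolution precisely when the left derivative is $\leq$ the right derivative (a convex ``V'' kink, so that $-\partial_{xx}$ produces a nonpositive Dirac mass), whereas your inequality corresponds to a concave ``$\Lambda$'' kink and would be the condition for a \emph{super}solution. As written, a $\chi$ with $\chi'(0^-)<0$ would therefore \emph{not} yield a subsolution. Fortunately the issue is entirely avoidable and indeed the paper avoids it: take $\chi$ to be $C^2$-smooth so that $\chi'$ vanishes at the endpoints of the interpolation interval; then $\underline{u}_1$ is a classical $C^2$ subsolution and no kink argument is needed at all. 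Relatedly, with your interpolation ending at $z=0$, the support of $\chi$ is $(-\infty,0]$ rather than being contained in $(-\infty,0)$ as required by the lemma; shifting the interpolation to $[-M-1,-1]$ (as the paper does on $[-Z-1,-Z]$) fixes this. With those two cosmetic corrections your proof matches the paper's.
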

We point out that this lemma actually holds true whatever the choice of $r>0$. The choice of $r > \frac{1}{2}$ will only be needed in the later stages of the proof of Proposition~\ref{fast_subsol}. 
\begin{proof}
Consider $\delta >0$ small enough so that
\begin{equation}\label{delta_around1}
f ' (u) \leq \frac{f'(1)}{2} < 0,
\end{equation}
for any $u \in [1-\delta, 1]$. Now take $Z >0$ such that
$$U_* (z) \geq 1 - \frac{\delta}{2},$$
for all $z \leq -Z$, and $\chi$ a smooth and nonincreasing function which is identical to 1 on $(-\infty, -Z-1)$ and identical to 0 on $(-Z, +\infty)$. Define also $T >0$ such that, for all $t \geq T$,
\begin{equation}\label{fast_subsol1_eq1}
e^{\frac{f'(1)}{2} t} \leq \frac{\delta}{2}\ , \quad  1 \geq  \frac{r}{t} - h'( t) \geq \frac{r}{2t}.
\end{equation}
It is then straightforward to compute that
\begin{eqnarray*}
& & \partial_t \underline{u}_1 - \partial_{xx} \underline{u}_1 - f (\underline{u}_1) \\
& = &  \left( \frac{r}{t} - h' (t) \right) U_* ' - \left( U_* '' + 2 U_* '+ f(U_*) \right) + f(U_*) - f( \underline{u}_1)   \\
& & \quad +  \left(- \frac{f'(1)}{2} \chi +  \left(2 - \frac{r}{t} + h'(t) \right) \chi ' + \chi ''   \right) e^{ \frac{f'(1)}{2} t} \\
& \leq &  \left( \frac{r}{t} - h' (t) \right) U_* '  + f(U_*) - f( \underline{u}_1)   +  \left( -\frac{f'(1)}{2} \chi +  \chi ' + \chi ''   \right) e^{\frac{f'(1)}{2} t} ,
\end{eqnarray*}
where $U_*$ and $\chi$ are evaluated at $x - 2t + r \ln t - h(t) $. Here we used the facts that $\chi ' \leq 0$, and $2 - \frac{r}{t} + h'(t) \geq 1$ by \eqref{fast_subsol1_eq1}.

Recall that $U_*$ is decreasing. Moreover, for any $t \geq T $ and $x < 2 t - r \ln t + h(t) - Z-1$, we have that $\chi = 1$, $\chi ' = \chi '' = 0$. From our choice of $Z$ and $T$ it follows that $$U_* (x - 2t + r \ln t - h(t)) \geq \underline{u}_1 (t,x) \geq U_* (-Z -1) - e^{\frac{f'(1)}{2} t} \geq  1 -\delta .$$
Thus, using also~\eqref{delta_around1},
\begin{eqnarray*}
& & \partial_t \underline{u}_1 - \partial_{xx} \underline{u}_1 - f (\underline{u}_1) \\
& \leq &  f(U_*) - f( \underline{u}_1)    -   \frac{f'(1)}{2} e^{\frac{f'(1)}{2} t} \\
& \leq & \frac{f'(1)}{2} e^{\frac{f'(1)}{2} t} -   \frac{f'(1)}{2} e^{ \frac{f'(1)}{2} t} \\
& \leq & 0.
\end{eqnarray*}
On the other hand, for $t \geq T$ and $x > 2 t - r \ln t + h(t) - Z$, we have that $\chi = \chi ' = \chi '' =0$, hence
\begin{equation*}
 \partial_t \underline{u}_1 - \partial_{xx} \underline{u}_1 - f (\underline{u}_1)  \leq   \left( \frac{r}{t} - h' (t) \right) U_* '  \leq 0 .
\end{equation*}
Finally, for $t \geq T$ and $x - 2 t + r \ln t - h(t) \in [ - Z- 1, -Z]$, we have from~\eqref{fast_subsol1_eq1} that
\begin{eqnarray*}
& & \partial_t \underline{u}_1 - \partial_{xx} \underline{u}_1 - f (\underline{u}_1) \\
& \leq &  \frac{r}{2t} U_* '  +    \left( \| f' \|_\infty \| \chi\|_\infty - \frac{f'(1)}{2} \| \chi \|_\infty +  \| \chi '  \|_\infty+ \| \chi ''  \|_\infty \right) e^{\frac{f'(1)}{2} t} .
\end{eqnarray*}
Since $\max_{[-Z-1 , -Z]} U_* ' < 0$, we get that this is nonpositive up to enlarging $T$. The lemma is proved.
\end{proof}

Let us now turn to the construction of the second subsolution, which shares some similarities with the arguments in~\cite{HNRR}. Here we will look at the equation \eqref{eq:main} in a moving frame, by letting $z = x  - 2t + \frac{1}{2} \ln t$, that is
\begin{equation}\label{eq:main_moving}
\partial_t u = \partial_{zz}  u + \left( 2 - \frac{1}{2t} \right)  \partial_z u + f(u).
\end{equation}
For now we neglect the nonlinear part and look at the resulting linearized equation
\begin{equation}\label{eq:main_moving_linear}
\partial_t u = \partial_{zz}  u + \left( 2 - \frac{1}{2t} \right)  \partial_z u + u.
\end{equation}
We will pick a particular subsolution on a right half-line $\{ z \geq 0\}$. To do so, we again rewrite the equation. We let $u = e^{-z} v$ and find
$$\partial_t v= \partial_{zz} v + \frac{1}{2t} ( v  - \partial_z v),$$
and in the self-similar variables $\tau = \ln t$ and $y = \frac{z}{\sqrt{t}}$,
$$\partial_\tau v = \partial_{yy} v +  \frac{y}{2} \partial_y v + \frac{1}{2} v - \frac{1}{2} e^{- \frac{\tau}{2}} \partial_y v .$$
As in~\cite{HNRR}, we then let
$$v = w \times e^{-\frac{y^2}{8}}  e^{- \frac{\tau}{2}} ,$$
and get
\begin{equation}\label{eq:nonautonomous}
\partial_\tau w = \partial_{yy} w - \left(\frac{y^2}{16} -\frac{3}{4} \right)w  - \frac{1}{2} e^{-\frac{\tau}{2}} \left(\partial_y w - \frac{y}{4} w\right).
\end{equation}
Notice that the autonomous part,
\begin{equation*}%\label{eq:autonomous1}
\partial_\tau w = \partial_{yy} w - \left(\frac{y^2}{16} -\frac{3}{4} \right)w ,
\end{equation*}
admits a family of particular solutions
$$ C_1 ye^{- \frac{y^2}{8}} + C_2 e^{-\frac{y^2}{8}} e^{\frac{\tau}{2}},$$
with $C_1, C_2 \in \R$. This comes from a combination of the Dirichlet and Neumann first eigenfunctions, respectively $ye^{-y^2/8}$ and $e^{-y^2/8}$. More precisely these are positive eigenfunctions of the elliptic operator $\mathcal{L}w := \partial_{yy}w - \left(\frac{y^2}{16} - \frac{3}{4} \right) w$ on the positive half-line $(0,+\infty)$ with either Dirichlet or Neumann boundary conditions at 0. Notice that the Neumann eigenfunction plays the dominant role here as $\tau \to +\infty$, while in~\cite{HNRR} it was the Dirichlet eigenfunction. This choice is of course consistent with the fast decay of the traveling wave $U_*$ at $+\infty$, and it will provide the correct logarithmic drift in this case. The reason we include the Dirichlet eigenfunction in this combination is that it increases the slope at $y=0$ of the resulting function, which will eventually help us merging this with $\underline{u}_1$ into a subsolution of~\eqref{eq:main}.

Before we proceed, let us replace $C_1$ and $C_2$ by some well-chosen functions of time in order to deal with the nonautonomous part of~\eqref{eq:nonautonomous} (as well as the nonlinear part of~\eqref{eq:main_moving}, as we will compute below) and obtain a subsolution. More precisely, we define
\begin{equation}\label{eq:defw_subsol0}
\underline{w} (\tau, y) := (1+ C e^{-\frac{\tau}{2}}) \times \left[ ye^{-\frac{y^2}{8}}  + e^{-\frac{y^2}{8}} e^{\tau \left(\frac{1}{2}-\varepsilon\right)}\right] .
\end{equation}
with $C>0$ and $\varepsilon \in (0,1/2)$ to be specified later. Notice that the exponential growth in time of the Neumann component is slowed down by the inclusion of this parameter~$\varepsilon$. This is consistent with our aim of constructing a subsolution, but also it makes the Dirichlet component relatively larger. This will be crucial later when merging our supersolutions. For convenience, we also introduce
$$w_0 (\tau, y) := ye^{-\frac{y^2}{8}} + e^{-\frac{y^2}{8}} e^{\tau \left(\frac{1}{2}-\varepsilon\right)} ,$$
so that $\underline{w} (\tau, y) = (1 + C e^{-\frac{\tau}{2}}) w_0 (\tau,y)$ and 
\begin{equation*}%\label{eq:defw_subsol0_rev}
\partial_\tau w_0 = -\varepsilon   e^{-\frac{y^2}{8}} e^{\tau \left(\frac{1}{2}-\varepsilon\right)} +  \partial_{yy} w_0 - \left(\frac{y^2}{16} -\frac{3}{4} \right)w_0 .
\end{equation*}
Then we have that 
$$\partial_\tau \underline{w} = -\varepsilon ( 1 + C^{-\frac{\tau}{2}})   e^{-\frac{y^2}{8}} e^{\tau \left(\frac{1}{2}-\varepsilon\right)}  -\frac{C}{2} e^{-\frac{\tau}{2}} w_0 + \partial_{yy} \underline{w}  -\left(\frac{y^2}{16} -\frac{3}{4} \right)\underline{w}. $$
Thus
\begin{eqnarray*}
& & \partial_\tau \underline{w} - \partial_{yy} \underline{w} + \left(\frac{y^2}{16} -\frac{3}{4} \right)\underline{w}  + \frac{1}{2} e^{-\frac{\tau}{2}} \left(\partial_y \underline{w} - \frac{y}{4} \underline{w}\right) \\
& \leq &  \frac{e^{-\frac{\tau}{2}}}{2} \left( -C w_0 +  \partial_y \underline{w} - \frac{y}{4} \underline{w}   \right)\\
& \leq & \frac{e^{-\frac{\tau}{2}}}{2} e^{-\frac{y^2}{8}} \left( - C y - C e^{\tau \left( \frac{1}{2} - \varepsilon\right)}  +  (1 + C e^{-\frac{\tau}{2}}) \Big(1 -\frac{y^2}{2} - \frac{y}{2} e^{\tau \left( \frac{1}{2} - \varepsilon\right)}\Big)\right) \\
& \leq & \frac{e^{-\frac{\tau}{2}}}{2} e^{-\frac{y^2}{8}} \left( - C e^{\tau \left( \frac{1}{2} - \varepsilon\right)}  +  1 + C e^{-\frac{\tau}{2}} \right) \\
& \leq & 0,
\end{eqnarray*}
for all $y \geq 0$ and $\tau$ large enough. Going back to problem~\eqref{eq:main_moving_linear}, this gives some large time $T>0$ and
\begin{equation}\label{tildeu_new}
\tilde{u} (t,z) :=  \left( 1+ \frac{C}{\sqrt{t}} \right) \times  \left[  \frac{1}{t^{\varepsilon}} +   \frac{z}{t} \right] e^{-z} e^{- \frac{z^2}{4t}}.
\end{equation}
%$$u (t,z) :=  \left( 1+ \frac{C}{\sqrt{t}} \right) \times \left[  \frac{1}{t^\varepsilon}+   \frac{z}{t} \right] e^{-z} e^{- \frac{z^2}{4t}}.$$
which satisfies 
$$\partial_t \tilde{u} \leq \partial_{zz} \tilde{u} + \left( 2 - \frac{1}{2t} \right) \partial_z \tilde{u} + \tilde{u},$$
for all $t \geq T$ and $z \geq 0$. Actually this also provides us with a subsolution of \eqref{eq:main}, as we show in the following lemma:
\begin{lem}\label{lem:ansatz2}
For any $\varepsilon \in \left(0,\frac{1}{2} \right)$, there exist $C>0$ and $T>0$ such that the function
\begin{align}
\underline{u}_2 (t,x) & :=   \left( 1+ \frac{C}{\sqrt{t}} \right) \times  \left[ \frac{1}{t^{\varepsilon}} +   \frac{x - 2t + \frac{1}{2} \ln t}{t} \right] e^{-(x - 2t + \frac{1}{2} \ln t)} e^{- \frac{(x - 2t + \frac{1}{2} \ln t)^2 }{4t}} , \notag
\end{align}
is a subsolution of \eqref{eq:main} in the subdomain $t \geq T$ and $x \geq 2 t + \ln t - 2 $.
\end{lem}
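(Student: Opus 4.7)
The plan is to upgrade the linear subsolution inequality already established in the paragraph preceding the lemma into a subsolution property for the full nonlinear equation. In the moving frame $z = x - 2t + \tfrac{1}{2}\ln t$, equation \eqref{eq:main} becomes \eqref{eq:main_moving}, which differs from the linearization \eqref{eq:main_moving_linear} only through the replacement of $\tilde u$ by $f(\tilde u)$. Since $f\in C^2$ with $f(0)=0$, $f'(0)=1$ and $\|f''\|_\infty<+\infty$, Taylor's theorem gives a constant $K>0$ such that $|f(u)-u|\le K u^2$ for every $u\in[0,1]$. Thus the task reduces to establishing a \emph{quantitative} (strict) linear inequality for $\tilde u$, with a margin large enough to absorb the nonlinear remainder $K\tilde u^2$ on the prescribed subdomain.

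First I would sharpen the chain of estimates for $\underline w$: for $\tau$ large the dominant summand in the last parenthesis before the concluding $\le 0$ is the term $-C e^{\tau(1/2-\varepsilon)}$, so in fact
\begin{equation*}
\partial_\tau \underline{w} - \partial_{yy}\underline{w} + \left(\tfrac{y^2}{16}-\tfrac{3}{4}\right)\underline{w} + \tfrac{1}{2} e^{-\tau/2}\bigl(\partial_y \underline{w} - \tfrac{y}{4}\underline{w}\bigr) \le -\kappa\, e^{-\varepsilon \tau} e^{-y^2/8}
\end{equation*}
for some $\kappa>0$ and all $\tau\ge \tau_0$, $y\ge 0$. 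Unrolling the substitutions $\underline w \mapsto v = \underline w\, e^{-y^2/8}e^{-\tau/2} \mapsto \tilde u = e^{-z}v$ with $\tau=\ln t$, $y=z/\sqrt t$, a direct tracking of multiplicative factors converts this into
\begin{equation*}
\partial_t \tilde u - \partial_{zz}\tilde u - \left(2 - \tfrac{1}{2t}\right)\partial_z \tilde u - \tilde u \le -\kappa'\, t^{-3/2-\varepsilon} e^{-z} e^{-z^2/(4t)}
\end{equation*}
on $\{t\ge T,\, z\ge 0\}$, for some $\kappa'>0$. Adding $\tilde u - f(\tilde u)\le K\tilde u^2$ to both sides, it will be enough to verify
\begin{equation*}
K\tilde u^2 \le \kappa'\, t^{-3/2-\varepsilon} e^{-z}e^{-z^2/(4t)}
\end{equation*}
on the domain $t\ge T$, $z\ge \tfrac{3}{2}\ln t - 2$ (the image of $x\ge 2t+\ln t - 2$).

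Plugging in the explicit formula for $\tilde u$ and dividing by $e^{-z}e^{-z^2/(4t)}$, this amounts to
\begin{equation*}
K\bigl(1+C/\sqrt t\bigr)^2\left[t^{-\varepsilon} + z/t\right]^2 e^{-z}e^{-z^2/(4t)} \le \kappa'\, t^{-3/2-\varepsilon}.
\end{equation*}
The restriction $z\ge \tfrac{3}{2}\ln t - 2$ yields $e^{-z}\le e^{2} t^{-3/2}$, and expanding the square bounds the left-hand side by a constant multiple of $t^{-3/2-2\varepsilon}$ (from the $t^{-\varepsilon}$ cross-term) plus a constant multiple of $t^{-2}$ (from the $(z/t)^2$ term, using the uniform bound $z^2 e^{-z}\le 4e^{-2}$). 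Both are of strictly smaller order than $t^{-3/2-\varepsilon}$ precisely when $\varepsilon<1/2$, so the inequality holds for all $t$ beyond some $T$. This is the main delicate point of the proof: the constraint $x\ge 2t+\ln t - 2$ is tailored to supply exactly the $\tfrac{3}{2}\ln t$ worth of extra exponential decay needed for the quadratic correction $e^{-2z}$ to be absorbed by the single-exponential linear gain $t^{-3/2-\varepsilon} e^{-z}$, while the hypothesis $\varepsilon<1/2$ controls the purely temporal $t^{-2}$ contribution. Together these yield the subsolution inequality for $\tilde u$ in the moving frame, which transfers back to $\underline u_2$ on the subdomain stated in the lemma.
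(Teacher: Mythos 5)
Your overall plan matches the paper's: establish the linear subsolution inequality for $\tilde u$ with an explicit margin, then absorb the nonlinear remainder $K\tilde u^2$ using the extra $t^{-3/2}$ decay supplied by $z\ge\tfrac{3}{2}\ln t-2$. But your execution deviates in a way that leaves a gap. The paper's linear margin is $\tfrac{C}{2t^{3/2}}\bigl(\tfrac{1}{t^\varepsilon}+\tfrac{z}{t}\bigr)e^{-z}e^{-z^2/4t}$; crucially this carries the same prefactor $\bigl(\tfrac{1}{t^\varepsilon}+\tfrac{z}{t}\bigr)$ as $\tilde u$ itself, so the paper cancels one copy of that factor against $K\tilde u^2$ and reduces the absorption step to a \emph{linear} inequality in $\bigl(\tfrac{1}{t^\varepsilon}+\tfrac{z}{t}\bigr)$, disposed of via the uniform bound $\sup_{s\ge0}(1+s)e^{-s^2/4}<\infty$ at $s=z/\sqrt t$ together with the choice \eqref{choice_C} of a large $C$.

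You instead discard the $\tfrac{z}{t}$ contribution to the margin, keeping only $\kappa' t^{-3/2-\varepsilon}e^{-z}e^{-z^2/4t}$, and then expand the quadratic $(t^{-\varepsilon}+z/t)^2$. Your accounting covers the $t^{-2\varepsilon}$ term and the $(z/t)^2$ term but silently drops the cross-term $2t^{-1-\varepsilon}\,z\,e^{-z}e^{-z^2/4t}$; bounding it by $ze^{-z}\le e^{-1}$ in the style of your other estimates yields a contribution of order $t^{-1-\varepsilon}$, which is \emph{larger} than your claimed margin $t^{-3/2-\varepsilon}$, so as written the inequality fails. The cross-term can be rescued — combine $e^{-z}\le e^2 t^{-3/2}$ on the subdomain with $ze^{-z^2/4t}\le\sqrt{2t}\,e^{-1/2}$ to obtain order $t^{-2-\varepsilon}$ — but this step is missing from your proof. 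Had you kept the paper's factored form, the cross-term would never have appeared, and you would also not have needed $\varepsilon<\tfrac{1}{2}$ in the absorption step (your version uses it to make $(z/t)^2 e^{-z}\lesssim t^{-2}$ beat $t^{-3/2-\varepsilon}$, whereas the paper only needs $\varepsilon<\tfrac{1}{2}$ earlier, when the sign of $-Ce^{\tau(1/2-\varepsilon)}+1+Ce^{-\tau/2}$ is established for large $\tau$).
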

\begin{proof} 
Notice that $\overline{u}_2$ and $\tilde{u}$ from~\eqref{tildeu_new} are the same up to the change of variables $z = x - 2t + \frac{1}{2} \ln t $. In particular it is enough to prove that, up to increasing $T$, the function $\tilde{u}(t,z)$ also satisfies
$$\partial_t \tilde{u} \leq \partial_{zz} \tilde{u}  + \left( 2 - \frac{1}{2t} \right) \partial_z \tilde{u} + f (\tilde{u}),$$
for all $t \geq T$ and $z \geq \frac{3}{2} \ln  t -2$.
%It remains to deal with the nonlinear term of~\eqref{eq:main_moving}. It turns out that, up to increasing $T$, the function $\tilde{u}(t,z)$ also satisfies
%$$\partial_t \tilde{u} \leq \partial_{zz} \tilde{u}  + \left( 2 - \frac{1}{2t} \right) \partial_z \tilde{u} + f (\tilde{u}),$$
%for all $t \geq T$ and $z \geq \frac{3}{2} \ln t - 2$. 
Indeed, we have
$$\partial_t \tilde{u} \times e^{z} e^{\frac{z^2}{4t}}= - \frac{C}{2 t^{3/2}} \left( \frac{1}{t^{\varepsilon}} + \frac{z}{t} \right) - \left( \frac{\varepsilon}{t^{1+ \varepsilon }} + \frac{z}{t^{2}} \right)  \left( 1 + \frac{C}{\sqrt{t}} \right) + \frac{z^2}{4t^2} \left( 1+ \frac{C}{\sqrt{t}}  \right)\left( \frac{1}{t^{\varepsilon}} + \frac{z}{t} \right).$$
Moreover,
\begin{eqnarray*}
\partial_z \tilde{u} \times e^{z} e^{\frac{z^2}{4t}} 
& = & \left(1 + \frac{C}{\sqrt{t}}  \right) \frac{1}{t} - \left( 1 + \frac{z}{2t} \right) \left(1 + \frac{C}{\sqrt{t}}  \right) \left( \frac{1}{t^{\varepsilon}}  + \frac{z}{t} \right) ,
\end{eqnarray*}
and
\begin{eqnarray*}
\partial_{zz} \tilde{u} \times e^{z} e^{\frac{z^2}{4t}}  &=& - \left( 1 + \frac{z}{2t} \right) \left(1 + \frac{C}{\sqrt{t}}  \right) \frac{1}{t} - \frac{1}{2t} \left(1 + \frac{C}{\sqrt{t}}  \right) \left( \frac{1}{t^{\varepsilon}}  + \frac{z}{t} \right)  \\
& & -  \left( 1 + \frac{z}{2t} \right) \left(1 + \frac{C}{\sqrt{t}}  \right) \frac{1}{t} +  \left( 1 + \frac{z}{2t} \right)^2 \left(1 + \frac{C}{\sqrt{t}}  \right)\left( \frac{1}{t^{\varepsilon}}  + \frac{z}{t} \right) \\
& = & \left(1 + \frac{C}{\sqrt{t}} \right) \left( - \frac{2}{t} - \frac{z}{t^{2}}  \right) + \left(1 + \frac{C}{\sqrt{t}} \right) \left( \frac{1}{t^{\varepsilon}}  + \frac{z}{t} \right) \left(-\frac{1}{2t} + 1 + \frac{z}{t} + \frac{z^2}{4t^2} \right) .
\end{eqnarray*}
Therefore
\begin{eqnarray*}
& & e^{z} e^{\frac{z^2}{4t}} \times \left[ \partial_t \tilde{u} - \partial_{zz} \tilde{u} - \left( 2 - \frac{1}{2t} \right) \partial_z \tilde{u}  - \tilde{u} \right]\\
& = &  \left(1 + \frac{C}{\sqrt{t}} \right) \left( \frac{1}{t^{\varepsilon}} + \frac{z}{t} \right) \left[  \frac{1}{2t} -2  - \frac{z}{t} + \left( 2 - \frac{1}{2t} \right) \left( 1 + \frac{z}{2t} \right) \right] \\
& & + \left(1 + \frac{C}{\sqrt{t}} \right) \left[  \frac{2}{t} - \frac{\varepsilon}{t^{1 + \varepsilon}} - \left( 2 - \frac{1}{2t} \right) \frac{1}{t}  \right] - \frac{C}{2 t^{3/2}} \left( \frac{1}{t^{\varepsilon}}  + \frac{z}{t} \right) \\
& = &  - \frac{z}{4t^2} \left(1 + \frac{C}{\sqrt{t}} \right) \left( \frac{1}{t^{\varepsilon}} + \frac{z}{t} \right)  + \left(1 + \frac{C}{\sqrt{t}} \right) \left[  \frac{1}{2t^2} - \frac{\varepsilon}{t^{1+\varepsilon}}  \right]   - \frac{C}{2 t^{3/2}} \left( \frac{1}{t^{\varepsilon}} + \frac{z}{t} \right) \\
& \leq & - \frac{C}{2t^{3/2}} \left( \frac{1}{t^\varepsilon} + \frac{z}{t} \right),
\end{eqnarray*}
where the last inequality holds for all $z \geq 0$ and $t \geq T$ (up to increasing $T$). As announced we have a subsolution of~\eqref{eq:main_moving_linear}. Furthermore, letting $K = \| f '' \|_\infty$, we then also have
\begin{eqnarray*}
& & \partial_t \tilde{u} - \partial_{zz} \tilde{u} - \left( 2 - \frac{1}{2t} \right) \partial_z \tilde{u}  - f(\tilde{u}) \\
& \leq  & \partial_t \tilde{u} - \partial_{zz} \tilde{u} - \left( 2 - \frac{1}{2t} \right) \partial_z \tilde{u}  -\tilde{u} + K \tilde{u}^2 \\
& \leq &  e^{-z} e^{-\frac{z^2}{4t}} \left( \frac{1}{t^{\varepsilon}} + \frac{z}{t} \right) \left[ - \frac{C}{2t^{3/2}} +  K \left( 1+ \frac{C}{\sqrt{t}} \right)^2 \left( \frac{1}{t^{\varepsilon}} +  \frac{z}{t} \right) e^{-z} e^{-\frac{z^2}{4t}} \right].
\end{eqnarray*}
Next, we choose $C$ large enough so that
\begin{equation}\label{choice_C}
C \geq 4 K e^2  \max_{ s \geq 0} ( 1 + s ) e^{- \frac{s^2}{4}}.
\end{equation}
and up to increasing~$T$ if needed, we assume that
$$\frac{3}{2} \ln t - 2 \geq 0, \quad 1 + \frac{C}{\sqrt{t}} < \sqrt{2},$$
for all $t \geq T$. In particular~$T>1$. Now, for $t \geq T$ and $z\geq  \frac{3}{2} \ln t -2$, it follows from~\eqref{choice_C} that
\begin{eqnarray*}
& & \partial_t \tilde{u} - \partial_{zz} \tilde{u} - \left( 2 - \frac{1}{2t} \right) \partial_z \tilde{u}  - f(\tilde{u}) \\
& \leq & e^{-z} e^{-\frac{z^2}{4t}} \left( \frac{1}{t^{\varepsilon}} + \frac{z}{t} \right) \left[  - \frac{C}{2t^{3/2}} + \frac{K}{t^{3/2}} \times 2 \left( \frac{1}{t^\varepsilon} +  \frac{z}{t} \right) e^{2-\frac{z^2}{4t}}   \right] \\
& \leq & e^{-z} e^{-\frac{z^2}{2t}} \left( \frac{1}{t^{\varepsilon}} + \frac{z}{t} \right) \frac{1}{2 t^{3/2}}  \left[  - C +  4K  \left( 1 +  \frac{z}{ \sqrt{t}} \right) e^{2-\frac{z^2}{4t}}   \right] \\
& \leq & 0.
\end{eqnarray*}
This ends the proof of Lemma~\ref{lem:ansatz2}.
\end{proof}

We are now in a position to conclude the proof of Proposition~\ref{fast_subsol}.
Let us glue together our two subsolutions $\underline{u}_1$ and $\underline{u}_2$. To do so, we choose
\begin{equation}\label{ht_fast1}
h(t) = \ln \left( 1 + \frac{C}{\sqrt{t}} \right) +  \ln \left( 1 + \frac{3\ln t}{2t^{1-\varepsilon}} \right) - \frac{9 (\ln t)^2}{16t},
\end{equation}
where $C$ comes from Lemma~\ref{lem:ansatz2}. In particular
$$e^{h(t)} = \left( 1 + \frac{C}{\sqrt{t}} \right) \left( 1 +  \frac{ 3 \ln t}{2 t^{1-\varepsilon}} \right)    e^{- \frac{9 (\ln t)^2}{16t}} $$
and also we have that 
$$h(t) \to 0 \ \mbox{ and } \ h'(t) = O (t^{-3/2}),$$
as $t \to +\infty$. In particular it satisfies \eqref{fast_subsol1} and Lemma~\ref{lem:ansatz1} applies. As we will see in the next computations, $h$ is chosen so that $\underline{u}_1 (t,x)$ intersects~$\underline{u}_2 (t ,x)$ around $2t + \ln t$, which according to Lemma~\ref{lem:ansatz2} is precisely where the latter function~$\underline{u}_2$ becomes a subsolution.

First, recall that
$$U_*(z) = e^{-z} + O (e^{-( 2 - \eta) z}) \ \mbox{ as } \ z \to +\infty,$$
with $\eta >0 $ arbitrarily small.

Then, recalling \eqref{fast_subsol1}, the definition of $\underline{u}_1$ in Lemma~\ref{lem:ansatz1} and also that $r > 1/2$, we get as $t \to +\infty$ that
$$\underline{u}_1 (t, 2t + \ln t -1 ) = U_*( (r +1) \ln t - 1 - h(t) ) = \frac{e^{1} e^{h(t)}}{t^{r +1}} + O \left(\frac{1}{t^{3}}\right).$$
On the other hand
\begin{eqnarray*}
\underline{u}_2 (t, 2 t + \ln t -1)& =&  \frac{1}{t^{\varepsilon}} \left( 1 + \frac{C}{\sqrt{t}} \right) \left( 1 +  \frac{ \frac{3}{2} \ln t -1}{t^{1-\varepsilon}} \right) \frac{e^{1}}{t^{3/2}}    e^{- \frac{(\frac{3}{2} \ln t - 1)^2 }{4t}} \\
& = &    \frac{e^{1} e^{h(t)} }{t^{\varepsilon+ 3/2}}    e^{ \frac{3 \ln t}{4t}  - \frac{1}{ 4t}}  -  \left( 1 + \frac{C}{\sqrt{t}} \right) \frac{e^{1}}{t^{5/2}}e^{- \frac{(\frac{3}{2} \ln t - 1)^2 }{4t} }  \\
& < & \frac{e^{1} e^{h(t)}}{t^{\varepsilon + 3/2}}  \left(1 +  \frac{\ln t}{t}  \right) - \frac{ e^{1}}{2 t^{5/2}},\\
& < & \frac{e^{1} e^{h(t)}}{t^{\varepsilon + 3/2}}  - \frac{ e^{1}}{2 t^{5/2}} + \frac{e^{1} e^{h(t)} \ln t}{t^{\varepsilon+ 5/2}} ,\\
& < & \frac{e^{1} e^{h(t)}}{t^{\varepsilon + 3/2}} - \frac{e^{1}}{4 t^{5/2}},
\end{eqnarray*}
where the inequalities hold for $t$ large enough. Notice that the term $-\frac{e^{1}}{2t^{5/2}}$ basically comes from the Dirichlet component in~\eqref{eq:defw_subsol0}. Moreover, it dominates the remainder~$ \frac{e^{1} e^{h(t)} \ln t}{t^{\varepsilon + 5/2}}$ thanks to~$\varepsilon >0$, which originates from our slowing down of the growth of the Neumann component in~\eqref{eq:defw_subsol0}. This is in turn the reason why we need to shift the traveling wave~$U_*$ slightly more than the expected logarithmic drift.

Now we finally take $ r \in \left( \frac{1}{2}, 1\right)$ and
$$\varepsilon = r - 1/2,$$
so that $ r+1 = \varepsilon + 3/2$. It follows that, up to increasing $T>0$ and for all $t \geq T$,
$$\underline{u}_1 (t, 2t + \ln t - 1 ) > \underline{u}_2 (t, 2t +  \ln t -1).$$
In a similar fashion, we have on the one hand
$$\underline{u}_1 (t, 2 t + \ln t + 1) = U_*( (r+1) \ln t + 1 - h(t) ) = \frac{e^{-1} e^{h(t)}}{t^{r+1 }} + O \left(\frac{1}{t^3}\right).$$
And on the other hand,
\begin{eqnarray*}
\underline{u}_2 (t, 2t + \ln t +1)& =&  \frac{1}{t^\varepsilon} \left( 1 + \frac{C}{\sqrt{t}} \right) \left( 1 + \frac{ \frac{3}{2}\ln t + 1}{t^{1-\varepsilon}} \right) \frac{e^{-1}}{t^{3/2}}    e^{- \frac{ ( \frac{3}{2} \ln t + 1)^2 }{ 4t}} \\
& = &    \frac{e^{-1} e^{h(t)} }{t^{\varepsilon + 3/2}}    e^{ -  \frac{3 \ln t}{4t} } e^{ - \frac{1}{ 4t}}  +   \left( 1 + \frac{C}{\sqrt{t}} \right) \frac{e^{-1}}{t^{5/2}}e^{- \frac{( \frac{3}{2}\ln t + 1)^2}{ 4t}}  \\
&  >& \frac{e^{-1} e^{h(t)}}{t^{\varepsilon + 3/2}} -  \frac{e^{-1} e^{h(t)} \ln t }{ t^{\varepsilon + 5/2}} + \frac{e^{-1}}{2 t^{5/2}},\\
& > & \frac{e^{-1} e^{h(t)}}{t^{\varepsilon + 3/2}} + \frac{ e^{-1}}{4 t^{5/2}},
\end{eqnarray*}
for $t$ large enough. Up to increasing $T$ again, it follows that for all $t \geq T$,
$$ \underline{u}_1 (t , 2t + \ln t + 1 ) < \underline{u}_2 (t, 2t + \ln t + 1).$$
Finally, we may define the following (generalized) subsolution:
$$\underline{u} (t,x) = \left\{\begin{array}{ll}
 \underline{u}_1 (t,x) & \mbox{ if } x \leq 2t + \ln t -1  , \vspace{3pt}\\
 \max \{ \underline{u}_1 (t,x)  ,\underline{u}_2 (t,x) \}  & \mbox{ if } 2t +  \ln t -1 < x < 2t + \ln t + 1, \vspace{3pt}\\
\underline{u}_2 (t,x) & \mbox{ if } x \geq 2t + \ln t  +1  .
\end{array}
\right.$$
More precisely, this is a subsolution for all $t \geq T$ and $x \in \R$, for some $T >0$ large enough. We highlight that a maximum (not a minimum) of two subsolutions creates another subsolution, which is why parameters had to be tuned carefully to get the necessary inequalities and hence continuity at $x= 2t + \ln t \pm 1$.

Notice that from its definition in Lemma~\ref{lem:ansatz2}, the function $\underline{u}_2$ converges uniformly to $0$ as $t \to +\infty$ in the subdomain $x \geq 2t + \ln -1$. It is then straightforward from the definition of $\underline{u}_1$ and~\eqref{ht_fast1} that~$\underline{u}$ satisfies statement~$(i)$ of Proposition~\ref{fast_subsol}. One may also check statement~$(ii)$ of Proposition~\ref{fast_subsol}, by noticing that
$$\lim_{x \to -\infty } \underline{u}_1 (T,x) =1 - e^{\frac{f'(1)}{2} T}<1,$$
and
$$\lim_{x \to + \infty } \frac{(x-X_2) \underline{u}_2 (T,x)}{e^{-(x-X_2)^2/4T}} = 0,$$
with $X_2 = 2T - \frac{1}{2} \ln T$. This concludes the proof of Proposition~\ref{fast_subsol}.

\subsection{A useful lemma}

In the KPP case, the nonlinearity $f$ is sublinear and therefore it is enough to consider the linearized equation to construct supersolutions. This is no longer the case for a general monostable reaction term where the nonlinear feature of \eqref{eq:main} cannot be discarded that easily. Therefore our supersolutions will be modeled on the traveling wave with minimal speed $U_*$ of the nonlinear problem. Still, because we must place ourselves in a moving frame with logarithmic drift, the function~$U_*$ cannot be used directly as a supersolution and one must instead approach it by solving an approximate nonlinear ODE. 

This is the purpose of the following lemma:
\begin{lem}\label{lem:approx}
Assume that the functions $U$ and $U_\varepsilon$ solve on the positive half-line, respectively,
$$U''+ 2 U' + f(U) = 0,$$
$$U_\varepsilon '' + (2 - \varepsilon) U_\varepsilon ' + f (U_\varepsilon) = 0,$$
as well as
$$U(0) - U_\varepsilon( 0) = O (\varepsilon), \qquad U '(0) - U_\varepsilon ' (0) = O (\varepsilon),$$
as $\varepsilon \to 0$. We also assume that
\begin{equation}\label{approx_eq1}
U, U' =  O (z e^{-z})
\end{equation}
as $z \to +\infty$.

Then, for any $\eta \in (0,1/2)$, there exists $\varepsilon_\eta$ such that for all $0 < \varepsilon < \varepsilon_\eta$ and $z \geq 0$,
\begin{equation}\label{wanted0}
 \max \{ | U(z) -U_\varepsilon (z) | ; |U ' (z) - U_\varepsilon ' (z) | \}  \leq \left( \varepsilon e^{-z} \right)^{1-\eta}.
\end{equation}
\end{lem}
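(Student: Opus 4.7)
Let $V := U - U_\varepsilon$. Subtracting the two ODEs and introducing $a(z) := \int_0^1 f'(U_\varepsilon(z) + \theta V(z))\, d\theta$, we obtain
\[
V'' + 2 V' + V = (1-a(z)) V - \varepsilon U_\varepsilon',
\]
with $a(z) \to f'(0) = 1$ as $z \to +\infty$ since both $U$ and $U_\varepsilon$ vanish at infinity. The plan is to treat this as a small perturbation of the constant-coefficient operator $V \mapsto V'' + 2V' + V$, whose fundamental solutions $e^{-z}$ and $z e^{-z}$ give the initial-value Green's function $K(z,s) = (z-s) e^{-(z-s)}$. Variation of parameters then yields the integral equation
\[
V(z) = V_0(z) + \int_0^z (z-s) e^{-(z-s)} \bigl[(1-a(s)) V(s) - \varepsilon U_\varepsilon'(s)\bigr] ds,
\]
where $V_0(z) = V(0) e^{-z} + (V(0) + V'(0)) z e^{-z}$ satisfies $|V_0(z)| \leq C \varepsilon(1+z) e^{-z}$ by the hypothesis on the initial data.

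Before invoking a Gr\"onwall-type estimate, I would establish a priori exponential decay for $U_\varepsilon$. Continuous dependence of ODE solutions on parameters gives $|U - U_\varepsilon| + |U' - U_\varepsilon'| = O(\varepsilon)$ uniformly on any fixed bounded interval $[0,Z]$. Choosing $Z$ large enough so that $U(Z)$ is already small drives the trajectory of $U_\varepsilon$ into a neighborhood of the origin in the phase plane, where the linearization $U_\varepsilon'' + (2-\varepsilon) U_\varepsilon' + U_\varepsilon = 0$ has characteristic roots with real part $-(1-\varepsilon/2)$, so standard linear stability yields $|U_\varepsilon(z)| + |U_\varepsilon'(z)| \leq C e^{-(1-\varepsilon/2) z}$ with $C$ independent of $\varepsilon$. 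Imposing $\varepsilon \leq 2\eta$ strengthens this to $|U_\varepsilon| + |U_\varepsilon'| \leq C e^{-(1-\eta) z}$, and combined with the hypothesis $|U(z)| \leq C z e^{-z}$ and $f \in C^2$, one gets $|1 - a(z)| \leq \|f''\|_\infty (|U(z)| + |U_\varepsilon(z)|) \leq C e^{-(1-\eta) z}$.

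Setting $\phi(z) := |V(z)| e^{(1-\eta) z}$ and multiplying the integral equation by $e^{(1-\eta) z}$, I would compute the three resulting kernel integrals via the substitution $u = z - s$. The homogeneous term contributes $C\varepsilon (1+z) e^{-\eta z} \leq C_\eta \varepsilon$, the source contributes at most $C_\eta \varepsilon/\eta^2$ after reducing to $\int_0^\infty u e^{-\eta u} du = 1/\eta^2$, and the linear-in-$V$ term is controlled by a multiple of $\|\phi\|_\infty$ with an explicit $\eta$-dependent coefficient. A Volterra-Gr\"onwall argument then yields $\|\phi\|_\infty \leq C_\eta \varepsilon$, i.e., $|V(z)| \leq C_\eta \varepsilon e^{-(1-\eta) z}$. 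Writing $C_\eta \varepsilon = (C_\eta \varepsilon^\eta) \cdot \varepsilon^{1-\eta}$ and setting $\varepsilon_\eta := C_\eta^{-1/\eta}$ to absorb the constant delivers the claimed bound $|V(z)| \leq \varepsilon^{1-\eta} e^{-(1-\eta) z} = (\varepsilon e^{-z})^{1-\eta}$ whenever $\varepsilon < \varepsilon_\eta$. The analogous estimate on $V'$ follows by differentiating the integral equation in $z$, since $\partial_z K(z,s) = (1 - (z-s)) e^{-(z-s)}$ obeys bounds of the same type.

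The main technical obstacle is the closure of the Gr\"onwall estimate uniformly in $\eta \in (0, 1/2)$: the factor $|1-a(s)|$ inherits only the slow decay $e^{-(1-\eta) s}$ of $U_\varepsilon$, not the faster rate $(1+s) e^{-s}$ enjoyed by $U$, so the kernel integral $\int_0^z (z-s) e^{-(z-s)} |1-a(s)| |V(s)| ds$ produces an operator norm on the weighted space that does not contract for arbitrarily small $\eta$ in a single pass. I expect to resolve this either by bootstrapping (using a preliminary bound on $V$ to refine the estimate on $U_\varepsilon$, and hence on $a$) or by absorbing the $\eta$-dependent constant into the freedom to further restrict $\varepsilon_\eta$. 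This loss is precisely why the conclusion has to be weakened from the naive $\varepsilon e^{-z}$ to the slightly larger $(\varepsilon e^{-z})^{1-\eta}$.
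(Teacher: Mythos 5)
Your approach is correct in broad strokes but genuinely different from the paper's, and it has one real gap plus one non-issue you over-worried about.

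The paper does not work with $V := U - U_\varepsilon$ directly. It sets $W := e^z(U - U_\varepsilon)$ and derives an ODE for $W$ in which the nonlinear remainder appears as a \emph{quadratic} term $\alpha_\varepsilon W^2 e^{-z}$ with $|\alpha_\varepsilon| \le \|f''\|_\infty$. It then runs a pure maximum-principle barrier argument: if $W$ first touches $\pm\overline{W}$ with $\overline{W}(z) := \varepsilon^{1-\eta}e^{\eta z}$, the second-derivative inequality at a touching point gives a contradiction for $\varepsilon$ small. The key structural advantage is that the quadratic term is controlled by the barrier itself — no separate a priori decay estimate on $U_\varepsilon$ is ever required. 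Your route (mean-value coefficient $a(z) = \int_0^1 f'(U_\varepsilon + \theta V)\,d\theta$, Duhamel with kernel $(z-s)e^{-(z-s)}$, weighted Gr\"onwall) trades the quadratic term for a linear one with coefficient $1 - a(z)$, but the price is that $|1-a|$ is controlled by $|U| + |U_\varepsilon|$, so you now need $U_\varepsilon$ to decay with a constant uniform in $\varepsilon$.

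On your self-identified concern about non-contraction: this is not a problem. The dominated kernel $b(s) := \frac{C}{e\eta}e^{-(1-\eta)s}$ is integrable on $(0,\infty)$, so the standard Gr\"onwall-Bellman inequality gives $\|\phi\|_\infty \le G\exp\left(\int_0^\infty b\right) = G\exp\left(\frac{C}{e\eta(1-\eta)}\right)$ with no need for contraction, and your plan of absorbing this $\eta$-dependent constant via $\varepsilon_\eta := C_\eta^{-1/\eta}$ is exactly right. No bootstrapping is needed here.

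The actual gap is the line ``standard linear stability yields $|U_\varepsilon(z)| + |U_\varepsilon'(z)| \le Ce^{-(1-\varepsilon/2)z}$ with $C$ independent of $\varepsilon$.'' This is the delicate point, and it is not standard: as $\varepsilon \to 0^+$ the linearization changes from a degenerate node (Jordan block, producing the $ze^{-z}$ profile of $U$) to a spiral with frequency $\omega \sim \sqrt{\varepsilon}$, and when one writes the trajectory at the matching time $Z$ in the spiral eigenbasis, the ``sine'' coefficient blows up like $e^{-Z}/\omega \sim e^{-Z}/\sqrt{\varepsilon}$. The amplitude bound $\sqrt{A_0^2 + B_0^2}$ is therefore not uniform. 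One can still salvage a uniform estimate — by bounding $|B_0\sin(\omega\tilde z)| \le |B_0\omega|\,\tilde z$ rather than by $|B_0|$, which trades the $1/\sqrt{\varepsilon}$ divergence for an extra polynomial factor $(1+z)$ — and then one must also justify that the nonlinear trajectory stays in the regime where this linearized picture applies, uniformly in $\varepsilon$. All of this can be made to work, but it is precisely the kind of quantitative ODE estimate that the paper's barrier argument on $W$ avoids entirely. If you want to pursue your route, this decay estimate needs to be proved carefully, not invoked; otherwise, switching to the paper's transformed variable $W = e^z(U - U_\varepsilon)$ and the barrier $\varepsilon^{1-\eta}e^{\eta z}$ gives a shorter and cleaner closure.
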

\begin{rmk}\label{new_remark}
By our assumptions, $2$ is the minimal traveling wave speed. In particular, the function $U_\varepsilon$ does not qualify as a traveling wave, and this is due to the fact that it is not positive on the whole real line as required in~\eqref{TW_ineq}. As a matter of fact, by standard ODE technics one may show that for any $\varepsilon >0$, the function~$U_\varepsilon$ oscillates around~$0$ at $+\infty$.
\end{rmk}
\begin{proof}
By the standard stability theory of solutions of an ODE (see for instance~\cite{Gronwall}), we know that for any $Z>0$, there exists $C_Z >0$ such that
\begin{equation}\label{stability}
|U (z) - U_\varepsilon (z)| + |U '(z) - U_\varepsilon ' (z)| \leq C_Z \varepsilon,
\end{equation}
for all small $\varepsilon >0$ and $z \in [0,Z]$. In particular, for any~$Z>0$ and $\eta \in (0,1/2)$, one can choose $\varepsilon$ small enough so that the wanted inequality~\eqref{wanted0} holds on the interval~$[0,Z]$.

Next, we consider the function $V = e^{z} U$, which solves
$$V ''  - V + e^{z} f(Ve^{-z}) = 0.$$
Similarly $V_\varepsilon = e^z U_\varepsilon$ solves
$$V_\varepsilon '' -\varepsilon V_\varepsilon ' - (1-\varepsilon) V_\varepsilon + e^{z} f(V_\varepsilon e^{-z}) = 0.$$
Then we look at the difference $W := V - V_\varepsilon$, which satisfies
$$W '' - W + \varepsilon V_\varepsilon '  - \varepsilon V_\varepsilon + e^{z} f(V e^{-z}) - e^{z} f (V_\varepsilon e^{-z}) = 0.$$
On the other hand,
\begin{eqnarray*}
 f (V_\varepsilon e^{-z})  - f (V e^{-z}) &=& f' ( V e^{-z}) (V_\varepsilon - V) e^{-z} + \alpha_\varepsilon (V_\varepsilon - V)^2 e^{-2z}\\
& = &  \left( f'(0) +  \alpha_0 Ve^{-z} \right) (V_\varepsilon - V) e^{-z} + \alpha_\varepsilon (V_\varepsilon - V)^2 e^{-2z}\\
 & = &  (V_\varepsilon - V) e^{-z}  + \alpha_0  V (V_\varepsilon - V) e^{-2 z} + \alpha_\varepsilon (V_\varepsilon - V)^2 e^{-2z}\\
 & = &  - W e^{-z}  - \alpha_0  V W e^{-2 z} + \alpha_\varepsilon W^2 e^{-2z},
\end{eqnarray*}
where 
\begin{equation}\label{revised_alpha}
|\alpha_0|, | \alpha_\varepsilon | \leq \| f'' \|_\infty .
\end{equation}
Putting this in the equation for $W$ we get
$$W '' =  \varepsilon (V - V' ) + \varepsilon (W' - W)  - \alpha_0 V W e^{-z} + \alpha_\varepsilon W^2 e^{-z} .$$
Now take $\eta \in (0,1/2)$, and let us introduce 
$$\overline{W} (z) := \varepsilon^{1-\eta} e^{\eta z}.$$
We also define $Z_\eta \geq 1$ such that
\begin{equation}\label{eq:lemma_cond1}
\forall z \geq Z_\eta, \quad  C z  e^{-\eta z} \leq  \frac{\eta^2}{4},
\end{equation}
with
\begin{equation}\label{revised_C}
 C:= \left( 1+ \| f'' \|_\infty \right) \times \left( 1 +  \sup_{z \geq 1} \frac{|V (z)| + | V ' (z)|}{z}  \right) . 
\end{equation}
The fact that $C$ is a well-defined positive real number comes from the assumption~\eqref{approx_eq1}.

From~\eqref{stability} and $W (z) = e^{z} ( U (z) - U_\varepsilon (z))$, we can choose $\varepsilon_\eta$ small enough so that, for any $\varepsilon \leq \varepsilon_\eta$ and for all $z \in [0, Z_\eta]$,
\begin{equation}\label{stability_bis}
|W  (z) | + |W ' (z)| \leq   C_{Z_\eta} \varepsilon e^{z} < \eta  \varepsilon^{1-\eta} e^{\eta z }.
\end{equation}
In particular we have
$$| W(Z_\eta) | < \overline{W} (Z_\eta) \, , \quad  |W ' (Z_\eta)| < \overline{W}  ' (Z_\eta).$$
We will now apply a comparison argument to infer that 
$$- \overline{W} (z) < W( z) < \overline{W} (z) $$
for all $z \geq Z_\eta$. We argue by contradiction and assume that $W$ intersects either $- \overline{W}$ or~$\overline{W}$. We denote by $z_1$ the leftmost contact point on $(Z_\eta, +\infty)$. Without loss of generality we consider the case when $W (z_1)= \overline{W} (z_1)$, and due to $W ' (Z_\eta) < \overline{W}' (Z_\eta)$ we get that $\overline{W} - W$ reaches a positive maximum at some $z_0 \in (Z_\eta, z_1)$.
Then
$$\overline{W} '' (z_0 ) \leq W '' (z_0) .$$
Putting this together with the equation satisfied by $W$, we get
$$\overline{W} '' (z_0) \leq  \varepsilon (V - V' ) (z_0) + \varepsilon (W' - W) (z_0)  + \left[ \alpha_\varepsilon W (z_0)^2  - \alpha_0 V (z_0) W (z_0) \right] e^{-z_0} .$$
Moreover, we also have that
$$\overline{W} ' (z_0) = W ' (z_0) \  , \quad - \overline{W} (z_0) \leq W (z_0) \leq \overline{W} (z_0).$$
Thus, using also~\eqref{revised_alpha} and~\eqref{revised_C} we deduce that
\begin{eqnarray*}
\overline{W} '' (z_0) & \leq &  \varepsilon ( |V| + |V'|) (z_0) + \varepsilon ( \overline{W} ' + \overline{W} ) (z_0) + \left[ \alpha_\varepsilon \overline{W} (z_0)^2 + \alpha_0 |V(z_0)| \overline{W} (z_0) \right] e^{-z_0} \\
& \leq & C \varepsilon z_0 + \varepsilon ( \overline{W} ' + \overline{W}) (z_0) + C e^{-z_0} \left[ \overline{W} (z_0)^2 +  z_0 \overline{W} (z_0) \right].
\end{eqnarray*}
Since $\overline{W}(z) = \varepsilon^{1-\eta} e^{\eta z}$, we have that
\begin{eqnarray*}
\varepsilon^{1-\eta} \eta^2 &  \leq & C \varepsilon z_0 e^{- \eta z_0}+ \varepsilon^{2-\eta} (1+\eta) + C e^{-(1+\eta) z_0 } (  \varepsilon^{2- 2\eta} e^{2 \eta z_0} +\varepsilon^{1-\eta} z_0 e^{\eta z_0}  ) \\
&  \leq & C \varepsilon z_0 e^{- \eta z_0}+ \varepsilon^{2-\eta} (1+\eta) + 2 C z_0  e^{-(1-\eta) z_0 } \varepsilon^{1-\eta}.
\end{eqnarray*}
Recalling~\eqref{eq:lemma_cond1} and that $\eta \in (0,1/2)$, $z_0 > Z_\eta$, we get that
$$2 C z_0 e^{-(1-\eta) z_0)} \varepsilon^{1-\eta} \leq \varepsilon^{1-\eta} \frac{\eta^2}{2} ,$$ 
and then
$$ \varepsilon^{1-\eta} \frac{\eta^2 }{2} \leq \varepsilon \frac{\eta^2}{4} + \varepsilon^{2-\eta} (1+ \eta)  .$$
Up to reducing $\varepsilon$, this is a contradiction and we have proved that 
$$|W  (z) | \leq  \overline{W} (z) = \varepsilon^{1-\eta} e^{ \eta z}$$
for all $z \geq Z_\eta$. The same argument also shows that
$$ | W ' (z) | \leq \overline{W} ' (z) = \eta \varepsilon^{1-\eta} e^{\eta z}$$
for all $z \geq Z_\eta$. Indeed, we know that $W' (Z_\eta) < \overline{W} ' (Z_\eta)$ and, if there exists some $z_1 > Z_\eta $ such that $W ' (z_1) > \overline{W} ' (z_1)$, then $\overline{W}- W$ must reach a positive maximum at some $z_0 \in (Z_\eta, z_1)$. We have just shown that such a maximum cannot happen.

Putting this together with~\eqref{stability_bis} and $(U-U_\varepsilon) (z) = e^{-z} W(z)$, we find for $\varepsilon$ small enough that
$$|U(z) - U_\varepsilon (z)| \leq ( \varepsilon e^{-z})^{1-\eta}, \quad |U' (z) -U_\varepsilon ' (z)| \leq (1+\eta) (\varepsilon e^{-z})^{1-\eta},$$
for all $z \geq 0$. Since~$\eta$ can be arbitrarily chosen in $(0,1/2)$, we may replace it by~$\eta/2$ in the previous inequalities, and up to reducing~$\varepsilon$ again, one finally recovers that
$$\max \{ |U(z) - U_\varepsilon (z)| ; | U'(z) -U_\varepsilon '(z)| \} \leq \left(1 + \frac{\eta}{2} \right) ( \varepsilon e^{-z} )^{1-\frac{\eta}{2}}  \leq (\varepsilon e^{-z} )^{1-\eta},$$
for all $z \geq 0$. This concludes the proof of this lemma.
\end{proof}

\subsection{The fast decay case: a supersolution}\label{sec:fast_super}

We again assume that $U_*$ has fast decay in the sense that 
\begin{equation}\label{eq:asymp00}
U_* (z) = e^{-z} + O (e^{-(2-\eta) z}),
\end{equation}
as $z \to +\infty$, for any $\eta >0$; see also~\eqref{eq:asymp}. Our purpose is now to construct a supersolution whose level sets are located around $2t -r \ln t$, where
$$r <\frac{1}{2},$$
to be chosen close to $\frac{1}{2}$.
\begin{prop}\label{fast_supersol}
For any $r \in \left( \frac{1}{4} , \frac{1}{2} \right)$, there exist $T>0$ and $\overline{u} (t,x)$ a supersolution of~\eqref{eq:main} on $[T,+\infty) \times \R$ such that:
\begin{itemize}
\item[$(i)$] for any $\lambda \in (0,1)$, there exists $X_1 >0$ such that, for any $t$ large enough,
$$\forall x < 2 t - r \ln t - X_1, \quad \overline{u} (t,x) > \lambda,$$
$$\forall x > 2 t - r \ln t + X_1, \quad \overline{u} (t,x) < \lambda ;$$
%the level set $\overline{E}_\lambda (t)$ of $\overline{u}$ is located around $2t - r \ln t$, in the sense that there exists $X_1 >0$ such that
%$$ \overline{E}_\lambda (t) \subset \left[ 2t - r \ln t - X_1, 2t - r \ln t + X_1 \right], $$
%for any $t$ large enough;
\item[$(ii)$] the function $\overline{u}$ is positive on $[T,+\infty) \times \R$ and there exists $X_2> 0 $ such that
$$\forall x \leq -X_2, \quad \overline{u} (T,x) \geq 1.$$
\end{itemize}
\end{prop}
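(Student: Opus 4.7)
The plan is to mirror the subsolution construction of Proposition~\ref{fast_subsol}, producing $\overline{u}$ as the minimum of two supersolutions: a piece $\overline{u}_1$ modeled on the traveling wave $U_*$ (handling the transition from near $1$ to near $0$), and a piece $\overline{u}_2$ modeled on the linearization at $0$ (handling the far-right tail). The key new ingredient is Lemma~\ref{lem:approx}: in the moving frame $z = x - 2t + r \ln t$ with $r < 1/2$, the naive candidate $U_*(z - h(t))$ is \emph{not} a supersolution, because the drift generates a term $(r/t) U_*' < 0$, and forcing $h'(t) \geq r/t$ would push the level sets of $\overline{u}_1$ all the way back to $2t + O(1)$. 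One therefore replaces $U_*$ by $U_{\varepsilon(t)}$ with $\varepsilon(t)$ slightly above $r/t$, so that the ODE $U_\varepsilon'' + (2-\varepsilon) U_\varepsilon' + f(U_\varepsilon) = 0$ produces a compensating $+\varepsilon U_\varepsilon'$ term, while Lemma~\ref{lem:approx} keeps $U_\varepsilon$ within $O\bigl((\varepsilon e^{-z})^{1-\eta}\bigr)$ of $U_*$ on the positive half-line and so preserves the level-set location at $2t - r \ln t + O(1)$.

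For $\overline{u}_1$, I would then set
$$\overline{u}_1(t,x) = U_{\varepsilon(t)}\bigl(x - 2t + r\ln t - h(t)\bigr) + \chi\bigl(x - 2t + r\ln t - h(t)\bigr)\, e^{f'(1)\,t/2},$$
with $\chi$ a smooth non-negative cutoff supported in $(-\infty, 0)$ with $\chi(-\infty) = 1$ (added rather than subtracted, so that $\overline{u}_1$ exceeds $1$ far to the left as required by statement~(ii)), with $h(t) = O(1)$ and $h'(t) = o(1/t)$ to be tuned for the gluing, and with $\varepsilon(t) = r/t + t^{-3/2}$. A direct computation parallel to Lemma~\ref{lem:ansatz1} should show $\overline{u}_1$ is a supersolution: the favorable term in the supersolution inequality is $(\varepsilon - r/t) U_\varepsilon' > 0$, the nonautonomous error $\dot\varepsilon\, \partial_\varepsilon U_\varepsilon$ is controlled via the H\"older-type bound of Lemma~\ref{lem:approx} on $U_\varepsilon - U_*$ and its derivative, and the linear stability $f'(1) < 0$ handles the $\chi$ correction on the region where $\overline{u}_1 \geq 1 - \delta$.

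For $\overline{u}_2$, I would work in the same self-similar variables $(\tau,y) = (\ln t, z/\sqrt t)$ with $z = x - 2t + (1/2) \ln t$ as in Section~\ref{sec:fast_sub}, and consider an ansatz analogous to~\eqref{eq:defw_subsol0} with two sign flips: a \emph{negative} Dirichlet-type coefficient (reducing the value and slope at $y = 0$) and a \emph{faster} Neumann growth $e^{\tau(1/2 + \tilde\varepsilon)}$ instead of $e^{\tau(1/2 - \varepsilon)}$, with $\tilde\varepsilon = 1/2 - r > 0$. Translating back to the $(t, x)$ variables produces an explicit $\overline{u}_2$ that is a supersolution of the linearized, then of the full nonlinear, equation in a right half-plane of the form $\{x \geq 2t + \alpha \ln t\}$ for some $\alpha$, by a computation parallel to Lemma~\ref{lem:ansatz2} (with the sign of the $C$ correction reversed so that the extra term pushes the inequality the right way).

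Finally I would glue by tuning $h(t)$ via an explicit logarithmic formula analogous to~\eqref{ht_fast1}, so that $\overline{u}_1$ and $\overline{u}_2$ cross once in a thin strip around $x = 2t + \alpha \ln t$ with $\overline{u}_1 < \overline{u}_2$ to the left and $\overline{u}_1 > \overline{u}_2$ to the right, and set
$$\overline{u}(t,x) = \begin{cases} \overline{u}_1(t,x), & x \leq 2t + \alpha\ln t - 1, \\ \min\{\overline{u}_1(t,x), \overline{u}_2(t,x)\}, & |x - 2t - \alpha\ln t| \leq 1, \\ \overline{u}_2(t,x), & x \geq 2t + \alpha\ln t + 1. \end{cases}$$
The minimum of two supersolutions is a generalized supersolution, so $\overline{u}$ is as required; statement~(i) then follows from the asymptotics of $\overline{u}_1$ at $\pm\infty$ and the uniform $o(1)$ bound on $\overline{u}_2$, while statement~(ii) follows from the $\chi$ correction at $-\infty$. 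The hardest point will be checking that $\overline{u}_1$ is actually a supersolution: the errors $\dot\varepsilon\, \partial_\varepsilon U_\varepsilon$ and the quadratic defect $f(U_\varepsilon) - f(U_*)$ must be absorbed by $(\varepsilon - r/t) U_\varepsilon'$ using only the H\"older-type bound of Lemma~\ref{lem:approx}, and this balancing, together with the matching of slopes at the gluing point, is what is expected to restrict $r$ to the interval $(1/4, 1/2)$.
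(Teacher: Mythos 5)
Your high-level plan — build $\overline{u}_1$ from a $t$-dependent perturbation $U_{\varepsilon(t)}$ of the traveling wave (invoking Lemma~\ref{lem:approx}), build $\overline{u}_2$ from a sign-flipped version of the self-similar ansatz~\eqref{eq:defw_subsol0}, and glue by a $\min$ — does match the paper's strategy in outline, but three of the steps you mark as ``direct computations'' are exactly where the paper has to do real work, and your version of each of them has a gap.

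First, and most seriously, you claim that the nonautonomous error $\dot\varepsilon\,\partial_\varepsilon U_\varepsilon$ ``is controlled via the H\"older-type bound of Lemma~\ref{lem:approx}.'' That lemma bounds $U_\varepsilon - U_*$ and $U_\varepsilon' - U_*'$ by $(\varepsilon e^{-z})^{1-\eta}$; it does not bound $\partial_\varepsilon U_\varepsilon$. Dividing the bound on $U_{\varepsilon_2}-U_{\varepsilon_1}$ by $\varepsilon_2 - \varepsilon_1$ produces an estimate that degenerates as $\varepsilon\to 0$, so there is no way to dominate $\dot\varepsilon\,\partial_\varepsilon U_\varepsilon$ by the small favorable term $(\varepsilon - r/t)U_\varepsilon'$ that you set aside for it (especially with your choice $\varepsilon = r/t + t^{-3/2}$, where the margin is only $t^{-3/2}$). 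The paper circumvents this entirely: it never estimates the size of $\partial_t U_r(\cdot;t)$, but instead proves by a sliding/comparison argument (Claim~\eqref{claim00} in the proof of Lemma~\ref{lem:fast_super_ansatz1bis}) that $t\mapsto U_r(z;t)$ is \emph{nondecreasing}, so the nonautonomous term has the favorable sign and needs no bound at all. Without some replacement for this monotonicity argument your $\overline{u}_1$ does not visibly close.

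Second, you treat $U_{\varepsilon(t)}$ as globally defined and tending to $1$ at $-\infty$, attaching a $\chi$-correction there as in Lemma~\ref{lem:ansatz1}. But $U_\varepsilon$ solves an ODE with speed $2-\varepsilon$ \emph{below} the minimal wave speed, so it cannot be a monotone profile: as Remark~\ref{new_remark} points out, it oscillates around $0$ and changes sign at some finite $Z_0(t)$. Lemma~\ref{lem:approx} is also only stated on $\{z\geq 0\}$. The paper therefore defines $\overline{u}_1$ \emph{piecewise}: on the left half it uses the genuine wave $U_* + \gamma/t$ (the additive $\gamma/t$ handles the supersolution inequality there, using the linear stability of $1$, and also gives statement~(ii) automatically since $\overline{u}_1\to 1+\gamma/t>1$ at $-\infty$), and it only invokes $U_r(\cdot;t)$ on $\{z\geq 0\}$, up to the first zero $Z_0(t)$, after verifying $Z_0(t)>(r+2)\ln t$ so that the gluing region is reached. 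Your global $U_\varepsilon$ plus $\chi$ construction has no analogue of these domain restrictions.

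Third, your $\overline{u}_2$, the two-term ansatz $e^{-y^2/8}e^{\tau(1/2+\tilde\varepsilon)} - c\, y e^{-y^2/8}e^{2\tilde\varepsilon\tau}$ with a negative Dirichlet coefficient, is not positive for all $y\geq 0$ (the bracket goes negative once $y\gtrsim e^{\tau(1/2-\tilde\varepsilon)}$), so the resulting $\overline{u}$ would violate the positivity required in statement~(ii) and the whole comparison-principle argument downstream. The paper fixes this by adding a third, $y^2 e^{-y^2/8}e^{2\tilde\varepsilon\tau}$ term (a second-Neumann-mode correction) to restore positivity, and even then $w_0$ is a supersolution only on $[0,3]$; for $y\geq 3$ the paper switches to a genuine solution $\tilde w$ of the linear equation with Neumann data, controlled by the appendix estimate~\eqref{eq:appen}. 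Your sketch would have to incorporate both of these additional ingredients to be airtight.

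So the approach is right in spirit, but the three checks you defer — bounding the $t$-derivative of the perturbed profile, the sign change and domain of $U_\varepsilon$, and the positivity of the right-hand supersolution — are precisely where the paper's proof introduces structure ($U_*+\gamma/t$ versus $U_r(\cdot;t)$ splitting, the monotonicity-in-$t$ comparison, $Z_0(t)$, the quadratic correction and merging with $\tilde w$) that your proposal is missing.
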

As before, the idea is to glue together different supersolutions. However, quite unusually, the construction of the supersolution turns out to be more complicated than the subsolution. One reason is that, if we slow down the traveling wave~$U^*$ to match the drift, it will not make a supersolution. Some perturbation is needed instead and this is where Lemma~\ref{lem:approx} is used.\\

We first introduce $\delta \in \left( 0, \frac{1}{2} \right)$ and $Z$ so that
\begin{equation}\label{delta00}
U_*(-Z) = 1 - 2\delta, \quad \mbox{ with} \ \ f ' (u) \leq \frac{f'(1)}{2} < 0 \mbox{ for } u \geq 1 - 2\delta.
\end{equation}
Up to decreasing $\delta$, we can also assume that $Z >0$ and 
\begin{equation}\label{eq:U''}
U_* '' (z) = - 2 U_* ' (z) - f (U_* (z)) < 0,
\end{equation}
for all $z \leq -Z$. This holds thanks to the fact that, as $z \to -\infty$, we have
$$U_* (z) = 1-  A e^{ \left(-1  + \sqrt{1 -  f' (1) }\right)z} + o \left(e^{\left(-1  + \sqrt{1 -  f' (1) }\right)z}  \right),$$
$$U_* ' (z) =  A \left(1 - \sqrt{1 - f' (1)}\right)  e^{\left(-1  + \sqrt{1 -  f' (1) }\right)z} + o \left(e^{\left(-1  + \sqrt{1 -  f' (1) }\right)z}  \right),$$
for some $A>0$, by standard ODE theory~\cite{CoddingtonLevinson}.

Then, for any $\gamma >0$, we define a function~$\overline{u}_1$ by,
\begin{equation}\label{def:overlineu1}
\forall t> \frac{1-\delta}{\gamma}, \, \forall x \leq 2t - r \ln t + h(t) -Z_\delta (t), \quad \overline{u}_1 (t,x) := U_*(x - 2t + r \ln t - h(t)) + \frac{\gamma}{t} ,
\end{equation}
where $h(t)$ is to be specified later but is already assumed to satisfy~\eqref{fast_subsol1}, i.e.
$$h(t) = O (1), \quad h '(t) = o(1/t) ,$$
as $t \to +\infty$, and $Z_\delta (t) $ is such that
\begin{equation}\label{eq:def_Z_delta}
U_* (-Z_\delta (t) ) = 1 - \delta - \frac{\gamma}{t} .
\end{equation}
In particular,
\begin{equation*}%\label{revised_over1}
\overline{u}_1 (t, 2t - r \ln t + h(t) - Z_\delta (t) ) = 1 - \delta .
\end{equation*}
The point $Z_\delta (t)$ is uniquely defined due to the fact that $U_* ' < 0$. The latter also implies that $Z_\delta (t)$ is an increasing function of $t$ and that
$$Z_\delta (t) > Z,$$
for any $t$ large enough, where we recall that $Z$ is such that $U_* (-Z) = 1-2\delta$ and~\eqref{eq:U''} holds for any $z \leq -Z$. Moreover,  it follows from the implicit function theorem that $Z_\delta$ is $C^1$ with respect to $t$ large enough, and that
\begin{equation}\label{claim00_add}
Z_\delta ' (t) = O \left( \frac{\gamma}{t^2} \right),
\end{equation}
as $t \to +\infty$. For later use, we also point out that $Z_\delta (t) \to Z_\delta^\infty$ as $t \to +\infty$, where 
$$U_* (-Z_\delta^\infty) = 1 -\delta .$$
As a consequence of~\eqref{claim00_add}, we even have that
\begin{equation}\label{claim00_add_useful}
Z_\delta (t) - Z_\delta^\infty = O \left( \frac{1}{t} \right),
\end{equation}
hence
\begin{equation}\label{add_again}
U_* ' (-Z_\delta (t)) - U_* ' (-Z_\delta^\infty) = O \left( \frac{1}{t}\right)
\end{equation}
as $t \to +\infty$.

The next lemma states that \eqref{def:overlineu1} defines a supersolution:
\begin{lem}\label{lem:fast_super_ansatz1}
If $h' (t) = o(1/t)$ as $t \to +\infty$, then there exist $\gamma >0$ and $T > \frac{1-\delta}{\gamma}$ such that the function~$\overline{u}_1$ defined by~\eqref{def:overlineu1} is a supersolution of \eqref{eq:main} for all $t \geq T$ and $x \leq 2t - r \ln t + h(t) -Z_\delta (t)$.
\end{lem}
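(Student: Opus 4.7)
The plan is to verify the supersolution inequality by direct substitution. Setting $z = x - 2t + r \ln t - h(t)$, the chain rule gives $\partial_t \overline{u}_1 = U_*'(z)(-2 + r/t - h'(t)) - \gamma/t^2$ and $\partial_{xx}\overline{u}_1 = U_*''(z)$, so that invoking the travelling wave identity $U_*'' = -2U_*' - f(U_*)$ collapses the parabolic operator to
$$
\partial_t \overline{u}_1 - \partial_{xx} \overline{u}_1 - f(\overline{u}_1) \;=\; U_*'(z)\!\left(\frac{r}{t} - h'(t)\right) - \frac{\gamma}{t^2} + f(U_*(z)) - f\!\left(U_*(z) + \frac{\gamma}{t}\right).
$$

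The crucial observation is that on the subdomain $z \leq -Z_\delta(t)$, the definition~\eqref{eq:def_Z_delta} of $Z_\delta(t)$ forces $U_*(z) \geq 1-\delta-\gamma/t$, so the entire interval $[U_*(z), U_*(z)+\gamma/t]$ is contained in $[1-2\delta, 1]$ provided $t$ is large enough that $\gamma/t \leq \delta$ (a small tail above $1$ can be absorbed by choosing $\delta$ slightly smaller, since $f'$ remains negative in a two-sided neighbourhood of $1$ by continuity). On that interval~\eqref{delta00} yields $f'\leq f'(1)/2<0$, so the mean value theorem provides the positive lower bound
$$
f(U_*(z)) - f\!\left(U_*(z) + \tfrac{\gamma}{t}\right) \;\geq\; \frac{|f'(1)|\,\gamma}{2t}.
$$

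To close the argument I would then observe that the two remaining, potentially negative terms are of strictly lower magnitude. Since $U_*'$ is bounded on $\mathbb{R}$ and $h'(t) = o(1/t)$ by~\eqref{fast_subsol1}, the first term satisfies $|U_*'(z)(r/t - h'(t))| \leq (r \|U_*'\|_\infty + o(1))/t$, while $\gamma/t^2$ is of order $1/t^2$. It therefore suffices to first fix $\gamma$ large enough so that $|f'(1)|\gamma/2 > r \|U_*'\|_\infty$ strictly, and then take $T$ large enough to absorb the $o(1)$ correction together with the $1/t^2$ term; the positive $f$-difference then dominates uniformly on the entire region $z \leq -Z_\delta(t)$.

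No genuine obstacle is expected here, since everything is driven by the good sign coming from the stable equilibrium at $1$. The delicate perturbation analysis of Lemma~\ref{lem:approx} is not needed at this stage precisely because $\overline{u}_1$ differs from a travelling-wave profile by an additive positive constant in the moving frame, so the travelling-wave ODE cancels exactly and no approximate ODE needs to be solved. The only care required is constant-bookkeeping and checking that $\gamma/t \leq \delta$ on the asserted domain, which is guaranteed once $T$ is taken large (comfortably larger than the implicit threshold $(1-\delta)/\gamma$ in the statement).
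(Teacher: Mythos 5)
Your proof is correct and follows essentially the same route as the paper's: expand the parabolic operator via the travelling-wave ODE, use that both $U_*(z)$ and $\overline{u}_1(t,x)=U_*(z)+\gamma/t$ lie in the region $\{u\ge 1-2\delta\}$ where $f'\le f'(1)/2<0$ to extract the favourable $|f'(1)|\gamma/(2t)$ term from the mean value theorem, then fix $\gamma$ large (so this dominates $r\|U_*'\|_\infty/t$) and $T$ even larger (to absorb the $o(1/t)$ from $h'$ and the $\gamma/t^2$ term). The only slight imprecision is the claim that $[U_*(z),U_*(z)+\gamma/t]\subset[1-2\delta,1]$ --- the right endpoint can exceed $1$ --- but this is harmless because condition~\eqref{delta00} bounds $f'$ on all of $[1-2\delta,+\infty)$, so your parenthetical workaround of shrinking $\delta$ is not even needed.
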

\begin{proof}
For any $x \leq 2t - r \ln t + h(t) -Z_\delta (t)$, by~\eqref{eq:def_Z_delta} and the monotonicity of~$U_*$ we have that~$\overline{u}_1 (t,x) \geq 1 -\delta$. Provided~$T$ is large enough, for $t \geq T$ we also get that $U_* (x-2t + r\ln t -h(t)) \geq 1 - 2 \delta$. Then thanks to~\eqref{delta00} we compute
\begin{eqnarray*}
&& \partial_t \overline{u}_1  - \partial_{xx} \overline{u}_1 - f (\overline{u}_1) \\
& \geq & \partial_t \overline{u}_1  - \partial_{xx} \overline{u}_1 - f (U_* ) - \frac{f'(1)}{2} \frac{\gamma}{t} \\
& \geq & -\frac{\gamma}{t^2}   - \frac{f'(1)}{2} \frac{\gamma}{t} + \left( \frac{r}{t}  - h'(t) \right)  U_* '\\
& \geq &  - \frac{f'(1)}{4} \frac{\gamma}{t} + \left( \frac{r}{t}  - h'(t) \right)  U_* ' ,
\end{eqnarray*}
where the last inequality holds for $t$ large enough and the function $U_*$ is evaluated at $x - 2t + r \ln t  - h(t)$. Recalling that $f '(1) < 0$, $h'(t)=o(1/t)$ and that $U_* ' $ is uniformly bounded on the whole real line (it converges to 0 as $z \to \pm \infty$), we can find $\gamma$ large enough so that the above is nonnegative, i.e. $\overline{u}_1$ is a supersolution on the wanted subdomain.
\end{proof}
Then we extend $\overline{u}_1$ on the right of the point $2t - r \ln t + h(t) - Z_\delta (t)$, by letting
\begin{equation}\label{eq:fast_super_ansatz1bis}
\forall t \geq  T, \, \forall x > 2t - r \ln t +  h(t) -Z_\delta (t), \quad \overline{u}_1 (t, x) := U_r  (x - 2 t + r \ln t - h(t) + Z_\delta (t); t),
\end{equation}
where, for any $t \geq T$, the function $U_r (\cdot; t) $ is the solution of the ODE
\begin{equation}\label{newODE}
U_r '' + \left(2 - \frac{2 r}{t} \right) U_r ' + f (U_r) = 0, 
\end{equation}
on $[0,+\infty)$, together with the boundary conditions
\begin{equation}\label{newODE_boundary}
U_r (0 ;t) = 1 - \delta,  \quad U_r ' ( 0; t) = U_* ' (-Z_\delta (t) ). 
\end{equation}
Here the prime denotes the derivative with respect to the first variable, the other variable $t$ acting as a parameter of the ODE~\eqref{newODE}. To avoid any ambiguity, from hereafter we will denote by $z$ the first variable of $U_r$, hence by $\partial_z$ the corresponding derivative.

Notice that \eqref{newODE_boundary} ensures that~$\overline{u}_1$, as defined by~\eqref{def:overlineu1} and~\eqref{eq:fast_super_ansatz1bis}, and its spatial derivative $\partial_x \overline{u}_1$, are continuous functions. \\

The function $\overline{u}_1$ will serve as the first supersolution. However, notice that $2 - \frac{2r}{t} $ is below the minimal wave speed $ 2$. Therefore (see also Remark~\ref{new_remark}), due to the positivity of~$f$ in the interval $(0,1)$ and by a phase plane analysis, there exists $Z_0 (t)$ such that
\begin{equation}\label{eq:Z0_neg_deriv00}
 \partial_z  U_r (z; t) < 0,
\end{equation}
on the interval $[ 0,  Z_0 (t) ]$, with 
\begin{equation}\label{eq:Z0_neg00}
U_r (Z_0 (t); t) = 0 .
\end{equation}
Thus, one may already expect that a second supersolution will be necessary. Before we construct it, let us check that the newly defined function~$\overline{u}_1$ is again a supersolution.
\begin{lem}\label{lem:fast_super_ansatz1bis}
If $h'(t) = o(1/t)$ as $t \to +\infty$, then there exist $\gamma >0$ and $T >0$ such that the function~$\overline{u}_1$ defined by~\eqref{def:overlineu1} and~\eqref{eq:fast_super_ansatz1bis} is a supersolution of
\eqref{eq:main} for all $t \geq T$ and $x < 2t - r \ln t +h (t) - Z_\delta (t) + Z_0 (t)$.
\end{lem}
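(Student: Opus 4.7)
Because the boundary conditions \eqref{newODE_boundary} make $\overline{u}_1$ and $\partial_x \overline{u}_1$ continuous across the interface $x = 2t - r \ln t + h(t) - Z_\delta(t)$, the function $\overline{u}_1$ is globally $C^1$ and piecewise $C^2$. Hence $\overline{u}_1$ is a supersolution of \eqref{eq:main} on the full strip if and only if the parabolic inequality holds classically on each of the two open pieces. The left piece is already covered by Lemma~\ref{lem:fast_super_ansatz1}, so the entire task is to verify the inequality on the right piece.

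On the right piece, set $\xi(t,x) := x - 2t + r \ln t - h(t) + Z_\delta(t) \in (0, Z_0(t))$, so that $\overline{u}_1(t,x) = U_r(\xi;t)$. The chain rule combined with the ODE \eqref{newODE} reduces the parabolic operator to
\begin{equation*}
\partial_t \overline{u}_1 - \partial_{xx} \overline{u}_1 - f(\overline{u}_1) = \left( -\frac{r}{t} - h'(t) + Z_\delta'(t) \right) \partial_z U_r(\xi;t) + \partial_t U_r(\xi;t).
\end{equation*}
Since $\partial_z U_r < 0$ on $(0, Z_0(t))$ by \eqref{eq:Z0_neg_deriv00}, $h'(t) = o(1/t)$, and $Z_\delta'(t) = O(1/t^2)$ by \eqref{claim00_add}, the first term on the right is bounded below by $\frac{r}{2t}\, |\partial_z U_r(\xi;t)|$ for $t$ large. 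It therefore suffices to establish
\begin{equation*}
|\partial_t U_r(\xi;t)| \leq \frac{r}{2t}\, |\partial_z U_r(\xi;t)| \qquad \text{for all } \xi \in (0, Z_0(t)).
\end{equation*}

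To estimate $\partial_t U_r$, I would differentiate \eqref{newODE}-\eqref{newODE_boundary} in $t$: the function $V := \partial_t U_r$ then solves a linear inhomogeneous ODE in $z$ with coefficients $2 - 2r/t$ and $f'(U_r)$, source $-(2r/t^2)\, \partial_z U_r$, and initial data $V(0;t) = 0$ and $\partial_z V(0;t) = -U_*''(-Z_\delta(t))\, Z_\delta'(t) = O(1/t^2)$ (using \eqref{claim00_add} and the boundedness of $U_*''$). Applying Lemma~\ref{lem:approx} to compare $U_r(\cdot;t)$ with the shifted minimal wave $U_*(\cdot - Z_\delta^\infty)$, which solves the $\varepsilon = 0$ ODE with the same initial data up to an $O(1/t)$ correction by \eqref{claim00_add_useful}-\eqref{add_again}, yields uniform control on $U_r$, $\partial_z U_r$ and $f'(U_r)$. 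A Gronwall-type estimate for the equation satisfied by $V$ then produces a bound on $|V(z;t)|$ whose decay in $z$ matches that of $\partial_z U_r \sim e^{-z}$ (coming from the asymptotic \eqref{eq:asymp00}), which closes the required inequality for $t$ large.

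The main obstacle is precisely this last uniform estimate on $\partial_t U_r$ over the interval $[0, Z_0(t)]$, whose length grows with $t$: the approximation in Lemma~\ref{lem:approx} becomes loose already at scales $z \sim \ln t$, while $Z_0(t)$ may well be substantially larger. In the region near the zero $Z_0(t)$, where both $U_r$ and $\partial_z U_r$ have become very small and the trajectory in the $(U_r, \partial_z U_r)$ plane is close to spiralling into the lower half-plane, the direct comparison with $U_*$ no longer provides the leading behavior, and a careful phase-plane analysis of \eqref{newODE} combined with the smallness of both the source and the initial data of the linearized equation for $V$ should be required to push the estimate all the way up to $Z_0(t)$.
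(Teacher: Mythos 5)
Your reduction via the chain rule and the ODE~\eqref{newODE} is exactly right: on the right piece the parabolic inequality boils down to
\[
\partial_t \overline{u}_1 - \partial_{xx}\overline{u}_1 - f(\overline{u}_1)
= \partial_t U_r(\xi;t) + \left(-\frac{r}{t} - h'(t) + Z_\delta'(t)\right)\partial_z U_r(\xi;t).
\]
But at this point you take a wrong turn. You aim for the \emph{two-sided} bound $|\partial_t U_r| \leq \frac{r}{2t}|\partial_z U_r|$, which is much stronger than what is needed and — as you yourself concede — runs into an unresolved obstacle near $Z_0(t)$. In fact, the asymmetry of the situation is the whole point: since $\partial_z U_r < 0$ on $(0,Z_0(t))$, the second term is already $\geq \frac{r}{2t}|\partial_z U_r| > 0$ for $t$ large, so one only needs to know that $\partial_t U_r$ does not spoil this sign, i.e.\ that $\partial_t U_r \geq 0$. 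No quantitative control of $\partial_t U_r$ is required.

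The paper establishes precisely this sign, by showing (Claim~\eqref{claim00}) that $t \mapsto U_r(z;t)$ is increasing for $z \in (0,Z_0(t_1))$. The argument is not a Gronwall estimate on $V = \partial_t U_r$ but a sliding comparison: the claim holds near $z=0$ because $t\mapsto \partial_z U_r(0;t) = U_*'(-Z_\delta(t))$ is negative and increasing (here one uses~\eqref{eq:U''} and the monotonicity of $Z_\delta$). If the claim failed at some first point $z_1$, one slides $U_r(\cdot;t_1)$ to the left until it touches $U_r(\cdot;t_2)$ from above at an interior tangency $z_2$, and then compares second derivatives using the ODEs: since $U_r(\cdot;t_1)$ and $U_r(\cdot;t_2)$ satisfy~\eqref{newODE} with drifts $2 - 2r/t_1 < 2 - 2r/t_2$ and the common first derivative at the tangency is negative, one obtains $\partial_{zz}U_r(z_2-S^*;t_1) - \partial_{zz}U_r(z_2;t_2) < 0$, contradicting the tangency from above. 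This phase-plane/sliding argument closes the proof on the whole interval $(0,Z_0(t))$ at once, sidestepping entirely the region near $Z_0(t)$ that you identify as problematic. Your proposal, as written, does not close the lemma: the monotonicity-in-$t$ observation is the missing idea, and without it your route requires a quantitative estimate you have not (and likely cannot straightforwardly) establish up to $Z_0(t)$.
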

\begin{proof}
We have already proved that $\overline{u}_1$ is a supersolution for $t \geq T$ and $x \leq 2 t - r \ln t + h(t) -Z_\delta (t)$, which was Lemma~\ref{lem:fast_super_ansatz1}. Then, since both $\overline{u}_1$ and $\partial_x \overline{u}_1$ are continuous at $x = 2 t - r \ln t + h(t) -Z_\delta (t)$, it is enough to check that $(t,x) \mapsto U_r (x-2t + r \ln t - h(t) + Z_\delta (t); t)$ satisfies the wanted differential inequality for $0 < x - 2t + r \ln t - h(t) + Z_\delta (t) < Z_0 (t)$.

Let us briefly consider $t$ as a parameter in the ODE~\eqref{newODE}satisfied by $U_r (\cdot; t)$. By the standard regularity theory of ODEs~\cite{Gronwall}, the function $t \mapsto U_r (\cdot; t)$ admits a derivative. Furthermore, we claim that, for any $T \leq t_1 < t_2$, 
\begin{equation}\label{claim00}
\forall z \in (0, Z_0 (t_1) ), \quad U_r (z ; t_1) < U_r (z ;  t_2).
\end{equation}
To check this, first recall that $Z_\delta (t)$ is an increasing function of $t$ and that it is bounded from below by $Z$ for $t \geq T$, up to increasing~$T$; see~\eqref{eq:def_Z_delta} and the subsequent discussion. Recalling also \eqref{eq:U''} and that $\partial_z U_r  (0;t) = U_* ' (-Z_\delta (t))$, we get that
%\begin{equation}\label{claim00_eq1}
$$t \mapsto \partial_z U_r  (0;t)$$
%\end{equation}
is a negative and increasing function. Since $U_r (0;t_1)  = U_r (0;t_2) = 1-\delta$, it follows that the wanted inequality \eqref{claim00} holds on a right neighborhood of $z =0$.

Now proceed by contradiction and assume that \eqref{claim00} does not hold. In that case there must exist $z_1 \in (0, Z_0 (t_1))$ such that
$$U_r (\cdot; t_1) < U_r (\cdot; t_2)$$
in the interval $(0,z_1)$, and $0 < U_r (z_1 ;t_1) = U_r (z_1 ;t_2)$. In particular~$U_r (\cdot ; t_2) >0$ on $(0,z_1]$ hence $z_1 \in (0, Z_0 (t_2))$.

From the monotonicity property~\eqref{eq:Z0_neg_deriv00}, we get
$$U_r (0;t_1) > U_r (z_1 ;t_2).$$
Then we define
$$S^* := \inf \{ S \geq 0 \, | \ U_r (\cdot - S; t_1) \geq U_r (\cdot; t_2) \ \mbox{ in $(S, z_1)$ } \} \in (0, z_1).$$
By construction, we have that $U_r (\cdot -S^* ;t_1) - U_r (\cdot;t_2) \geq 0 $ in $[ S^*,z_1 ]$, and that there exists $z_2 \in [ S^* ,z_1 ]$ such that 
$$U_r (z_2-S^* ;t_1) - U_r (z_2;t_2) =0.$$
Furthermore, using again the monotonicity property~\eqref{eq:Z0_neg_deriv00} for both $U_r (\cdot; t_1)$ and $U_r (\cdot; t_2)$, we have $U_r (0; t_1)  = U_r (0;t_2 )> U_r (S^*; t_2)$ and $U_r (z_1 -S^* ;t_1 ) > U_r (z_1; t_1) = U_r (z_1;t_2)$. Thus $z_2 \in (S^* ,z_1)$ and
$$\partial_{z} U_r (z_2 - S^* ; t_1) = \partial_{z} U_r (z_2 ; t_2) , \quad \partial_{zz} U_r (z_2 - S^* ;t_1) \geq \partial_{zz} U_r (z_2;t_2).$$
However, from the ODEs~\eqref{newODE} solved by $U_r (\cdot; t_1)$ and $U_r (\cdot;t_2)$, and the fact that $t_1 < t_2$, one may now check that
\begin{eqnarray*}
\partial_{zz} U_r  (z_2 -S^*;t_1) -   \partial_{zz} U_r  (z_2;t_2) & = & \frac{2r}{t_1} \partial_z   U_r (z_2 - S^*;t_1) - \frac{2r}{t_2} \partial_z U_r (z_2,t_2) \\
& = & \left( \frac{2r}{t_1}  - \frac{2r}{t_2} \right) \partial_z U_r (z_2,t_2) \\
& < & 0.
\end{eqnarray*}
We have reached a contradiction and this proves Claim~\eqref{claim00}.

It now follows from Claim~\eqref{claim00} that
$$\partial_t U_r (z;t) \geq 0,$$
for all $t \geq T$ and $z \in (0,Z_0(t))$.
We can now compute, for $t >0$ and $x -2t +r \ln t - h(t) + Z_\delta (t) \in (0,Z_0(t))$,
\begin{eqnarray*}
&& \partial_t \overline{u}_1  - \partial_{xx} \overline{u}_1 - f (\overline{u}_1) \\
& = & \partial_t U_r  - \partial_{zz} U_r  - \left( 2 - \frac{r}{t}  + h' (t) - Z_\delta ' (t) \right) \partial_z U_r  - f (U_r) \\
& \geq &  \left( - \frac{r}{t} - h' (t)  + Z_\delta ' (t) \right) \partial_z U_r  \\
& \geq & 0,
\end{eqnarray*}
where $U_r$ and its derivatives are evaluated at $(x - 2t + r \ln t - h(t) + Z_\delta (t) ,t)$ and the last inequality holds for large times, thanks to \eqref{claim00_add}, \eqref{eq:Z0_neg_deriv00} and our assumption that~$h' (t) = o(1/t)$ as $t \to +\infty$.
\end{proof}

As we already mentioned, the function $U_r (z;t)$ (hence $\overline{u}_1$) changes sign, which is why a second supersolution will be needed. Before we proceed, we need a better understanding of $U_r (z;t)$ as $z \to +\infty$, or more precisely around $(r+1) \ln t$ where we will match it with our second supersolution. First recall that~$U_r (\cdot ; t)$ solves~\eqref{newODE} and~\eqref{newODE_boundary}. Therefore, by~\eqref{add_again} and Lemma~\ref{lem:approx} with $\varepsilon = \frac{2r}{t}$, $U = U_* (\cdot - Z_\delta^\infty)$ and $U_\varepsilon = U_r ( \cdot ;t)$, we get for any small $\eta >0$, any $z \geq 0$ and any $t$ large enough that
\begin{equation}\label{revised_decay0}
|U_r (z;t) - U_* (z- Z_\delta^\infty )| \leq \left( \frac{2r}{t} e^{-z} \right)^{1-\eta}.
\end{equation}
In particular this gives
\begin{equation}\label{eq:ansatz1_decay1}
\sup_{z \geq (r+1) \ln t + Z_\delta^\infty - 2} |U_r  ( z ;t) - U_*(z - Z_\delta^\infty ) | = O \left(  t^{- (r + 2)(1-\eta) }\right),
\end{equation}
as $t \to +\infty$, with $\eta>0$ arbitrarily small. Moreover, recall from~\eqref{eq:Z0_neg_deriv00}-\eqref{eq:Z0_neg00} that $Z_0 (t)$ is the smallest positive point where $U_r (\cdot ;t)$ equals~0. Then using again~\eqref{revised_decay0}, we get first that $Z_0 (t) \to +\infty$ as $t \to +\infty$, and then also that
$$U_* (Z_0 (t) - Z_\delta^\infty) \leq  \left( \frac{2r}{t} e^{-Z_0 (t)} \right)^{1-\eta}.$$
Due also to~\eqref{eq:asymp00}, we have
$$
\left( \frac{2r}{t} e^{-Z_0 (t)} \right)^{1-\eta} \geq e^{-(Z_0 (t) - Z_\delta^\infty)} - K e^{-(2-\eta) (Z_0 (t) - Z_\delta^\infty)},
$$
for some $K>0$ and $\eta >0$ arbitrarily small, from which one can infer that
\begin{equation}\label{eq:ansatz1_decay2}
Z_0 (t) > (r+2) \ln t  ,
\end{equation}
for $t$ large enough.\\

Let us now construct the second supersolution. As in the construction of the subsolution, we use some combination of Neumann and Dirichlet eigenfunctions of a linear operator obtained by some change of variables in an appropriate moving frame. 

Let us recall the equation
\begin{equation}\label{eq:rewritten}
\partial_\tau w = \partial_{yy} w - \left( \frac{y^2}{16} - \frac{3}{4} \right) w - \frac{1}{2} e^{-\frac{\tau}{2}} \left( \partial_y w - \frac{y}{4} w \right),
\end{equation}
which is equivalent to \eqref{eq:main} in the moving frame centered at $2t- \frac{1}{2} \ln t$; see the computation leading to~\eqref{eq:nonautonomous} in Subsection~\ref{sec:fast_sub}. Notice that, plugging the first Neumann eigenfunction $e^{-\frac{y^2}{ 8}}$ in the right-hand term, then the nonautonomous term $\partial_y w - \frac{y}{4} w $ is negative which is the wrong sign when looking for a supersolution of~\eqref{eq:rewritten}. Therefore, here we need to proceed more carefully.

We first take the following combination:
\begin{equation}\label{eq:defw_supersol0}
w_0 (\tau, y ) := e^{-\frac{y^2}{8}} e^{\tau \left( \frac{1}{2} + \varepsilon \right)} -  2 y e^{-\frac{y^2}{8}}  e^{ 2 \varepsilon \tau} +  y^2 e^{-\frac{y^2}{8}} e^{2 \varepsilon \tau}.
\end{equation}
Here we fix
$$\varepsilon = \frac{1}{2} -r \in \left(0 , \frac{1}{2} \right),$$
to be made arbitrarily small.

The above combination~\eqref{eq:defw_supersol0} shares some similarities with the one used in our subsolution; see~\eqref{eq:defw_subsol0}. In particular, the first two functions $e^{-\frac{y^2}{8}}$, $y e^{-\frac{y^2}{8}}$ are the first eigenfunctions of the elliptic operator $\mathcal{L} w := \partial_{yy}w - \left(\frac{y^2}{16} - \frac{3}{4} \right) w$, posed on the positive half-line respectively with a Neumann and a Dirichlet boundary condition at~$0$, and the corresponding eigenvalues are respectively $1/2$ and $0$. However, in order to get a supersolution, we instead slightly increased the exponential growth in time of the Neumann component. Furthermore, since it is crucial to make the second supersolution steeper than~$\overline{u}_1$, we put a negative constant in front of the Dirichlet component. We also made the exponential growth in time of the Dirichlet component ($2\varepsilon$ instead of~$0$) even faster relatively to the Neumann part ($\varepsilon + 1/2$ instead of $1/2$), which will eventually ensure that the Dirichlet component is large enough to dictate the slope around the matching point of both supersolutions. Finally, the third term, which is modeled on the second Neumann eigenfunction of~$\mathcal{L}$, ensures the positivity of~$w_0$. More precisely, there exists $\tau_1 \geq 0$ such that, for any $\tau \geq \tau_1$ and $y \geq 0$, 
%\begin{equation}\label{ansatz2_positive}
$$e^{ \tau \left( \frac{1}{2} - \varepsilon \right)}  -  2 y + y^2 > 0 ,$$
%\end{equation}
hence $w_0$ is positive on $[\tau_1 , +\infty) \times \R_+$.

Let us now check that $w_0$ is a supersolution of~\eqref{eq:rewritten} for $\tau \geq \tau_1$ (up to increasing~$\tau_1$) and $y \in [0,3]$. For convenience and since the equation is linear, we write $w_0$ as $w_1 + w_2 + w_3$, each $w_i$ denoting the $i$-th term of the sum in the right-hand side of~\eqref{eq:defw_supersol0}. We now compute the equation for each component separately.

First,
\begin{eqnarray*}
&& \partial_\tau w_1 - \partial_{yy} w_1 + \left( \frac{y^2}{16} - \frac{3}{4} \right) w_1 + \frac{1}{2} e^{-\frac{\tau}{2}} \left( \partial_y w_1 - \frac{y}{4} w_1 \right) \\
& = & \varepsilon w_1 + \frac{1}{2} e^{- \frac{\tau}{2}} \left( \partial_y w_1  - \frac{y}{4} w_1 \right) \\
& = & \varepsilon e^{-\frac{y^2}{8}} e^{\tau \left( \frac{1}{2} + \varepsilon \right) }  - \frac{y}{4} e^{-\frac{y^2}{8}} e^{\varepsilon \tau}\\
& \geq & \varepsilon e^{-\frac{y^2}{8}} e^{\tau \left( \frac{1}{2} + \varepsilon \right) }   - (1+ y^3) e^{-\frac{y^2}{8}} e^{2\varepsilon \tau}.
\end{eqnarray*}
Next,
\begin{eqnarray*}
&& \partial_\tau w_2 - \partial_{yy} w_2 + \left( \frac{y^2}{16} - \frac{3}{4} \right) w_2 + \frac{1}{2} e^{-\frac{\tau}{2}} \left( \partial_y w_2 - \frac{y}{4} w_2 \right) \\
& = & 2\varepsilon w_2 +  \frac{1}{2} e^{-\frac{\tau}{2}} \left( \partial_y w_2  - \frac{y}{4} w_2 \right)  \\
%& = &  2 \varepsilon w_2 + e^{-\frac{y^2}{8}} e^{-\frac{\tau}{2}} e^{2 \varepsilon \tau} \left( \frac{y^{2}}{2} -1 \right ) \\
& =& - 4 \varepsilon y e^{-\frac{y^2}{8}} e^{2 \varepsilon \tau} + e^{-\frac{y^2}{8}} e^{-\frac{\tau}{2}} e^{2 \varepsilon \tau} \left( \frac{y^{2}}{2} -1 \right ) \\
& \geq &  - (4 \varepsilon y +1 ) e^{-\frac{y^2}{8}} e^{2\varepsilon \tau} \\ 
& \geq & -2 (1+y^3) e^{-\frac{y^2}{8}} e^{2 \varepsilon \tau}.
\end{eqnarray*}
Finally,
\begin{eqnarray*}
&& \partial_\tau w_3 - \partial_{yy} w_3 + \left( \frac{y^2}{16} - \frac{3}{4} \right) w_3 + \frac{1}{2} e^{-\frac{\tau}{2}} \left( \partial_y w_3 - \frac{y}{4} w_3 \right) \\
& = &  2 \varepsilon w_3 + \frac{1}{2} w_3  - 2 e^{-\frac{y^2}{8}} e^{2 \varepsilon \tau} +  \frac{1}{2} e^{-\frac{\tau}{2}} \left( \partial_y w_3  - \frac{y}{4} w_3 \right) \\
& = &   \left( 2 \varepsilon +  \frac{1}{2} \right) y^2 e^{-\frac{y^2}{8}} e^{2 \varepsilon \tau} - 2 e^{-\frac{y^2}{8}} e^{2 \varepsilon \tau} +  e^{-\frac{y^2}{8}} e^{-\frac{\tau}{2}} e^{2\varepsilon \tau} \left(  y - \frac{y^3}{4}   \right)\\
& \geq &  - \left(2 + \frac{y^3}{4} \right) e^{-\frac{y^2}{8}} e^{2 \varepsilon \tau} .
\end{eqnarray*}
Putting all the above together, we find that
\begin{eqnarray*}
& &\partial_\tau w_0 -\partial_{yy} w_0 + \left( \frac{y^2}{16} - \frac{3}{4} \right) w_0 + \frac{1}{2} e^{-\frac{\tau}{2}} \left( \partial_y w_0 - \frac{y}{4} w_0 \right)  \\
& \geq & \varepsilon e^{-\frac{y^2}{8}} e^{\tau \left( \frac{1}{2} + \varepsilon \right) } -  K_1   (1 + y^3) e^{-\frac{y^2}{8}} e^{2 \varepsilon \tau} \\
& > &  0,
\end{eqnarray*}
where $K_1$ is a positive constant which does not depend on $y \geq 0$ and $\tau \geq 0$, and the last inequality holds on the interval $y \in [0,3]$ for all $\tau \geq \tau_1$ (up to increasing~$\tau_1$). Unfortunately, looking at the negative sign of the single $y^3$-order term in the previous computations, one may observe that $w_0$ cannot be a supersolution on the whole right half-line~$\{y \geq 0\}$. Therefore, we need to merge it with yet another function. One may also notice that this $y^3$-order term comes from the inclusion of~$w_3$ in the definition of~$w_0$, and try to remove it to solve this issue. However, if we remove~$w_3$ then the function~$w_0$ is no longer positive so that an additional step is needed either way. Moreover, by adding this third term, we also make the decay of $w_0$ slower as $y \to +\infty$, which actually makes this additional step easier.

This leads us to also introduce $\tilde{w}$ which solves \eqref{eq:rewritten}, i.e.
$$\partial_\tau \tilde{w} -\partial_{yy} \tilde{w} + \left( \frac{y^2}{16} - \frac{3}{4} \right) \tilde{w} + \frac{1}{2} e^{-\frac{\tau}{2}} \left( \partial_y \tilde{w} - \frac{y}{4} \tilde{w} \right) = 0,$$
for $\tau >0$ and $y >0$, together with the Neumann boundary condition
$$ \partial_y \tilde{w} (\tau, 0) = 0,$$
for $\tau >0$, and the initial data
$$\tilde{w} (0, y) =  e^{-\frac{y^2}{8}},$$
for $y >0$. Proceeding similarly as in the proof of~\cite[Lemma 2.2]{HNRR}, one may check that there exists $W_1>0$, and for any bounded interval $[0,L]$ there exists $K (L) >0$, such that
\begin{equation}\label{eq:appen}
 \left| \tilde{w}  (\tau ,y) -  W_1   e^{-\frac{y^2}{8}}e^{\frac{\tau}{2}} \right| \leq  K(L)  ,
\end{equation}
for any $\tau \geq 1$ and $y \in [0,L]$. For the sake of completeness, we include the details in Appendix~\ref{appen}. By the strong maximum principle, the function~$\tilde{w}$ is positive. We also notice, for later use, that $ e^{\tau } e^{\frac{y^2 }{8}}$ is a supersolution of~\eqref{eq:rewritten}. and thus, for any $\tau \geq 0$ and $y \geq 0$,
\begin{equation}\label{fast_supersol_ansatz2_thing2}
0 < \tilde{w} (\tau  , y) \leq  e^{\tau } e^{\frac{y^2 }{8}}.
\end{equation}

Next we define, for $\tau \geq 0$,
\begin{equation}\label{def:revised_supersol_truc}
\overline{w} (\tau, y)  := \left\{
\begin{array}{ll}
\displaystyle w_0 (\tau,y) & \mbox{ if } \ 0 \leq y \leq 1, \vspace{3pt}\\
\displaystyle \min \left\{ w_0 (\tau, y) , \frac{1}{W_1} \tilde{w} (\tau , y) \times e^{\varepsilon \tau } \right\} & \mbox{ if }\  1 <  y < 3,\vspace{3pt}\\
\displaystyle  \frac{1}{W_1}  \tilde{w} (\tau , y) \times e^{\varepsilon \tau} & \mbox{ if } \ y \geq 3.
\end{array}
\right.
\end{equation}
By definition, this function is positive but coincides with $w_0$ when $y \leq 1$, which in the original variables writes as $x - 2t + \frac{1}{2} \ln t \leq \sqrt{t}$. In particular we will be able to compute the resulting supersolution explicitely at the matching point with $\underline{u}_1$, in a similar manner as we did in the proof of Proposition~\ref{fast_subsol}.

To check that it is a supersolution of~\eqref{eq:rewritten}, we must look at which of the two functions realize the minimum at $y=1$ and $y=3$. First, by~\eqref{eq:defw_supersol0} we have for $\tau$ large enough that
$$w_0 ( \tau,  1 ) - e^{-\frac{1}{ 8}} e^{ \tau \left( \frac{1}{2}  + \varepsilon \right)} = - e^{-\frac{1}{ 8}} e^{2 \varepsilon \tau} < - \frac{ K (3)}{W_1} e^{\varepsilon \tau} .$$
Using~\eqref{eq:appen}, we get 
$$w_0 ( \tau, 1) <  \frac{1}{W_1} \tilde{w} (\tau, 1) e^{\varepsilon \tau },$$
hence
$$\overline{w} (\tau,1)  = w_0 (\tau, 1),$$
for any $\tau$ large enough.

On the other hand, at $y=3$, we have by~\eqref{eq:appen} that
$$\frac{1}{W_1} \tilde{w} (\tau , 3) e^{\varepsilon \tau } \leq e^{-\frac{9}{8}} e^{\tau \left( \frac{1}{2} +\varepsilon \right)} + \frac{ K(3)}{W_1} e^{\varepsilon \tau},$$
while
$$w_0 (\tau,3 ) =e^{-\frac{9}{8}} e^{ \tau\left( \frac{1}{2} +\varepsilon \right)}  + 3 e^{-\frac{9}{8}}  e^{2 \varepsilon \tau}  .$$
Thus, we find that 
$$\overline{w} (\tau, 3) = \frac{1}{W_1} \tilde{w} (\tau ,3) e^{\varepsilon \tau},$$ for all~$\tau$ large enough.
Notice that the third term in the definition of~$w_0$ is what ensured that~$w_0 (\tau,\cdot)$ eventually always intersects $\frac{1}{W_1} \tilde{w} (\tau,y) \times e^{\varepsilon \tau}$ in the interval~$[1,3]$.

We infer that the function $\overline{w} (\tau,y)$ is continuous for $\tau$ large enough. Moreover, $w_0 (\tau,y)$ is a supersolution of~\eqref{eq:rewritten} on the domain $[\tau_1, +\infty) \times [0,3]$, and $\frac{1}{W_1} e^{\varepsilon \tau } \tilde{w} (\tau , y)$ is a supersolution on the whole domain $[0, +\infty) \times [0, +\infty)$ (by a straightforward computation). We finally conclude, up to increasing~$\tau_1$, that $\overline{w} (\tau,y)$ is a supersolution of~\eqref{eq:rewritten} on $[\tau_1 , +\infty) \times [0,+\infty)$.\\

%
%Going back to the original linearized problem in a moving frame and summing up the above, we have proved the following lemma:
%\begin{lem}\label{lem:fast_super_ansatz21}
%For any $\varepsilon \in \left( 0,  \frac{1}{2} \right)$, there exist $T >0$ and a positive function~$\tilde{u}$ on $[T, +\infty) \times [0,+\infty)$ such that
%%\begin{equation}\label{lem:fast_super_ansatz21_plus}
%$$\tilde{u} (t,z) := t^\varepsilon \left[ 1 - 2 \frac{z}{t^{1-\varepsilon}} +  \frac{z^2}{t^{3/2 - \varepsilon }}  \right] e^{-z} e^{-\frac{z^2}{4t} },
%$$
%%\end{equation}
%for any $t \geq T$ and $z \in [0, \sqrt{t}]$, and
%$$\partial_t \tilde{u} - \partial_{zz} \tilde{u}  - \left( 2 - \frac{1}{2t } \right) \partial_z \tilde{u}  - \tilde{u}  \geq 0,$$
%as well as
%\begin{equation}\label{eq:lem:fast_super_ansatz21_upper}
%\tilde{u} (t,z) \leq t^{2 \varepsilon + \frac{1}{2}} e^{-z},
%\end{equation}
%for any $t \geq T$ and $z \geq 0$.
%\end{lem}

Going back to the original problem, we are in position to prove the following lemma, which is the last one before gluing together our supersolutions.
\begin{lem}\label{lem:fast_super_ansatz22}
For any $\varepsilon \in \left( 0 , \frac{1}{4} \right)$, there exist $C>0$ and $T>0$ such that the function
$$\overline{u}_2 (t,x) := \left( 1 - \frac{C}{t^{1/4}} \right) \tilde{u} \left(t,x -2t + \frac{1}{2} \ln t \right),$$
is a supersolution of \eqref{eq:main} in the subdomain $t \geq T$ and $x \geq 2t + \ln t - 2$, where~$\tilde{u}$ is a positive function which satisfies that
\begin{equation}\label{eq:explicit_rev}
\tilde{u} (t,z) = t^\varepsilon \left[ 1 - 2 \frac{z}{t^{1-\varepsilon}} +  \frac{z^2}{t^{3/2 - \varepsilon }}  \right] e^{-z} e^{-\frac{z^2}{4t} },
\end{equation}
for any $t \geq T$ and $z \in [0, \sqrt{t}]$, as well as
\begin{equation}\label{eq:lem:fast_super_ansatz21_upper}
\tilde{u} (t,z) \leq t^{2 \varepsilon + \frac{1}{2}} e^{-z},
\end{equation}
for any $t \geq T$ and $z \geq 0$.
\end{lem}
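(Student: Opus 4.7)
The plan is to define $\tilde u(t,z)$ by running the chain of changes of variables from Subsection~\ref{sec:fast_sub} in reverse, but applied to $\overline w$ rather than $\underline w$. Concretely I would set
$$\tilde u(t, z) := \frac{1}{\sqrt{t}}\, e^{-z}\, e^{-z^2/(8t)}\, \overline w\bigl(\ln t,\, z/\sqrt{t}\bigr).$$
Each substitution in the sequence $u = e^{-z} v$, self-similar rescaling $(\tau, y) = (\ln t, z/\sqrt{t})$, and $v = w\, e^{-y^2/8} e^{-\tau/2}$ preserves the supersolution property, so since $\overline w$ is a supersolution of~\eqref{eq:rewritten} on $[\tau_1, +\infty) \times [0, +\infty)$, the function $\tilde u$ satisfies
$$\partial_t \tilde u - \partial_{zz} \tilde u - \left(2 - \frac{1}{2t}\right) \partial_z \tilde u - \tilde u \geq 0,$$
for all $t \geq e^{\tau_1}$ and $z \geq 0$.

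Next I would verify the explicit formula~\eqref{eq:explicit_rev}. For $z \in [0, \sqrt{t}]$ one has $y \in [0,1]$, hence $\overline w = w_0$; substituting the definition~\eqref{eq:defw_supersol0}, merging the two $e^{-y^2/8}$ factors into $e^{-z^2/(4t)}$, and collecting powers of $t$ reproduces~\eqref{eq:explicit_rev} by a purely algebraic calculation analogous to the one leading to~\eqref{tildeu_new}. The upper bound~\eqref{eq:lem:fast_super_ansatz21_upper} is then checked by splitting the half-line according to~\eqref{def:revised_supersol_truc}: on $z \in [0, \sqrt{t}]$ the explicit formula directly yields $\tilde u \leq 2\, t^{\varepsilon}\, e^{-z}$, the bracket in~\eqref{eq:explicit_rev} being uniformly bounded when $\varepsilon < 1/2$; on $z \in [\sqrt{t}, 3\sqrt{t}]$ one bounds $\overline w \leq w_0$ and uses the uniform estimate of the polynomial prefactor of $w_0$ for $y \in [1,3]$; and on $z \geq 3\sqrt{t}$ one uses $\overline w = W_1^{-1}\, \tilde w\, e^{\varepsilon \tau}$ together with~\eqref{fast_supersol_ansatz2_thing2}, which cancels the $e^{-z^2/(8t)}$ factor and leaves $\tilde u \leq W_1^{-1}\, t^{1/2+\varepsilon}\, e^{-z}$, bounded in turn by $t^{2\varepsilon + 1/2}\, e^{-z}$ for $t$ large.

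To conclude I would set $\overline u_2(t, x) := (1 - C t^{-1/4})\, \tilde u\bigl(t,\, x - 2t + \tfrac{1}{2}\ln t\bigr)$ and compute in the $(t,x)$ variables. Differentiating the prefactor contributes an extra positive term $\frac{C}{4\, t^{5/4}}\, \tilde u$ to $\partial_t \overline u_2$; combined with the linear inequality above and the Taylor estimate $f(u) \leq u + \frac{1}{2}\|f''\|_\infty u^2$ (which holds since $f'(0) = 1$), the nonlinear supersolution inequality reduces to the pointwise bound
$$\tilde u(t, z) \leq \frac{C}{2\, \|f''\|_\infty\, (1 - C t^{-1/4})^2}\, t^{-5/4}.$$
In the subdomain $x \geq 2t + \ln t - 2$, i.e. $z \geq \frac{3}{2} \ln t - 2$, the explicit formula yields $\tilde u(t, z) = O\bigl(t^{\varepsilon - 3/2}\bigr)$ (for $z \leq \sqrt t$; for larger $z$ the decay is much faster and the inequality is trivial). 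Since $\varepsilon < 1/4$ we have $t^{\varepsilon - 3/2} = o(t^{-5/4})$, so the inequality holds for $C$ and $T$ large enough.

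The main obstacle is the tight matching of exponents: the hypothesis $\varepsilon < 1/4$ is precisely what makes the value of $\tilde u$ at the left boundary of the subdomain (of order $t^{\varepsilon - 3/2}$) small enough to be absorbed by the $O(t^{-5/4})$ slack created by the prefactor $1 - C t^{-1/4}$. Some bookkeeping is also required to check that the cross-terms produced by the chain rule through $z = x - 2t + \frac{1}{2}\ln t$ combine correctly with the linear supersolution inequality for $\tilde u$, rather than leaving stray negative contributions of the same order as the gain.
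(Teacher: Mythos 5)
Your proposal is correct and follows essentially the same route as the paper: defining $\tilde u$ by undoing the chain of substitutions ($u=e^{-z}v$, self-similar variables, $v = w\,e^{-y^2/8}e^{-\tau/2}$) applied to $\overline w$, reading off~\eqref{eq:explicit_rev} from $\overline w = w_0$ on $y\in[0,1]$, obtaining~\eqref{eq:lem:fast_super_ansatz21_upper} from~\eqref{def:revised_supersol_truc} and~\eqref{fast_supersol_ansatz2_thing2}, and then absorbing the quadratic error from $f$ with the $\frac{C}{4}t^{-5/4}\tilde u$ gain produced by the prefactor $1-Ct^{-1/4}$, using the restriction $z\ge \frac{3}{2}\ln t - 2$ together with $\varepsilon<1/4$ to make $\tilde u = O(t^{\varepsilon-3/2}) = o(t^{-5/4})$ there. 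The only cosmetic differences are that you split the upper bound into three regions $[0,\sqrt t]$, $[\sqrt t,3\sqrt t]$, $[3\sqrt t,\infty)$ while the paper treats $y\le 1$ and $y>1$ uniformly, and that you keep the sharper Taylor constant $\frac{1}{2}\|f''\|_\infty$ where the paper uses $\|f''\|_\infty$; neither affects the argument.
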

%
%Recalling~\eqref{fast_supersol_ansatz2_thing2} and the explicit formula for $w$, we also notice that 
%%\begin{equation}\label{eq:new_new_new}
%$$\overline{w} (\tau,y) \leq  e^{(1+2 \varepsilon) \tau} e^{\frac{y^2}{8}},$$
%%\end{equation}
%for all $y \geq 0$ and (up to increasing $\tau_1$ again) any $\tau \geq \tau_1 $.\\
\begin{proof}
We define
$$\tilde{u} (t,z) = e^{-z}  e^{-\frac{y^2}{8}} e^{-\frac{\tau}{2}} \times \overline{w} (\tau,y), $$
where $\tau = \ln t$, $y = \frac{z}{\sqrt{t}}$ and the (positive) function~$\overline{w}$ was introduced in~\eqref{def:revised_supersol_truc}. In particular, recalling also~\eqref{eq:defw_supersol0}, we immediately get an explicit formula for $\tilde{u} (t,z)$ when $0 \leq z \leq \sqrt{t}$ which is precisely~\eqref{eq:explicit_rev}. 

Moreover, it follows from~\eqref{def:revised_supersol_truc} together with~\eqref{fast_supersol_ansatz2_thing2} that there exists some $C>0$ such that
$$\overline{w} (\tau,y) \leq C  e^{(1+ \varepsilon) \tau} e^{\frac{y^2}{8}},$$
for all $\tau \geq 0$ and  $y > 1$. Up to increasing the constant~$C$, it is straightforward from~\eqref{eq:defw_supersol0} that the same inequality is also satisfied for $y\in [0,1]$. In terms of $\tilde{u}$, we infer that
$$\tilde{u} (t,z) \leq C t^{ \varepsilon + 1/2} e^{-z},$$
for any $t \geq 1$ and $z \geq 0$. Choosing any~$T$ large enough, we get~\eqref{eq:lem:fast_super_ansatz21_upper} for all~$t\geq T$.

Next, it follows from the discussion preceding Lemma~\ref{lem:fast_super_ansatz22} that~$\overline{w}$ is a supersolution of~\eqref{eq:rewritten} for $\tau \geq 0$ and $y \geq 0$, which is equivalent to 
$$\partial_t \tilde{u} - \partial_{zz} \tilde{u}  - \left( 2 - \frac{1}{2t } \right) \partial_z \tilde{u}  - \tilde{u}  \geq 0,$$
for any $t \geq 1$ and $z \geq 0$. Letting $K = \| f'' \|_\infty$, we compute
\begin{eqnarray*}
&& \partial_t \overline{u}_2 - \partial_{xx} \overline{u}_2 -  f(\overline{u}_2) \\
& \geq &  \partial_t \overline{u}_2 - \partial_{xx} \overline{u}_2 -  \overline{u}_2 -  K \overline{u}_2^2 \\
& \geq & \frac{C}{4 t^{5/4}} \tilde{u}   - K \left( 1 - \frac{C}{t^{1/4}} \right)^2 \tilde{u}^2 \\
& \geq & \tilde{u} \left( \frac{C}{4 t^{5/4}} - K \tilde{u}\right),
\end{eqnarray*}
where $\tilde{u}$ is evaluated at $(t,x -2t + \frac{1}{2 } \ln t)$.

Now, for all $x - 2t + \frac{1}{2} \ln t \in \left[ \frac{3}{2} \ln t - 2 ,  \sqrt{t} \right]$, we can use the explicit formula~\eqref{eq:explicit_rev} for~$\tilde{u}$. In particular, we find that 
\begin{eqnarray*}
\sup_{ \left[ \frac{3}{2} \ln t - 2 ,  \sqrt{t} \right]} \tilde{u} ( t,\cdot)  & \leq &  t^\varepsilon \left[1 + \frac{t}{t^{3/2 -\varepsilon}} \right] e^{2-\frac{3}{2} \ln t }\\
& = & O (t^{\varepsilon - \frac{3}{2}})
\end{eqnarray*}
as $t \to +\infty$. Therefore, thanks to $0 < \varepsilon  < \frac{1}{4}$ and up to increasing $T$, we get that
$$\partial_t \overline{u}_2 - \partial_{xx} \overline{u}_2 - f (\overline{u}_2) \geq 0,$$
for all $t \geq T$ and $2t  + \ln t - 2 \leq x \leq 2t +  \sqrt{t} - \frac{1}{2} \ln t $.

Then, for $  x \geq 2t +  \sqrt{t} - \frac{1}{2} \ln t$, from~\eqref{eq:lem:fast_super_ansatz21_upper} we get that 
$$\tilde{u} \left( t,x -2t + \frac{1}{2} \ln t \right) \leq t^{2 \varepsilon + \frac{1}{2}} e^{- \sqrt{t}}, $$
and the wanted differential inequality is again satisfied for large enough times. Finally we have proved Lemma~\ref{lem:fast_super_ansatz22}.
\end{proof}
Let us now proceed with the proof of Proposition~\ref{fast_supersol} and the ``matching'' argument of the two supersolutions $\overline{u}_1$ (recall Lemmas~\ref{lem:fast_super_ansatz1} and~\ref{lem:fast_super_ansatz1bis}) and $\overline{u}_2$ (see Lemma~\ref{lem:fast_super_ansatz22}). This matching occurs in a bounded neighborhood of $2t + \ln t$ (with respect to the original non moving frame), and it again shares some similarities with the construction of the subsolution. We choose
%\begin{equation}\label{ht_fast2}
$$h(t) = \ln \left( 1 - \frac{C}{t^{1/4}} \right) + \ln \left( 1 - \frac{3 \ln t}{ t^{1-\varepsilon}} + \frac{9 (\ln t)^2}{4 t^{3/2 - \varepsilon}}  \right) - \frac{9 (\ln t)^2}{16 t} ,
$$
%\end{equation}
so that
$$e^{h(t)} =   \left( 1 - \frac{C}{t^{1/4}} \right) \left( 1 - \frac{3 \ln t}{ t^{1-\varepsilon}} + \frac{9 (\ln t)^2}{4 t^{3/2 - \varepsilon}}  \right) e^{- \frac{9 (\ln t)^2}{16 t} }.
$$
As before, we find that $h (t) \to 0$ as well as $h '(t) = o(1/t)$ as $t \to +\infty$. Thus, Lemmas~\ref{lem:fast_super_ansatz1} and~\ref{lem:fast_super_ansatz1bis} apply.

First we point out that the asymptotics~\eqref{eq:asymp00} of the traveling wave can be extended to its first derivative, i.e. we also have
$$U_* ' (z) = - e^{-z} + O (e^{-(2-\eta) z}),$$
as $z \to +\infty$. Thanks to these asymptotics, using also~\eqref{claim00_add_useful} and~\eqref{eq:ansatz1_decay1} and taking $\varepsilon = \frac{1}{2} - r\in \left( 0, \frac{1}{4} \right)$, we find that
\begin{eqnarray*}
\overline{u}_1 (2t + \ln t - 1) & = &  U_r ( (r+1) \ln t -1 - h(t) + Z_\delta (t); t) \\
 & = &  U_* ( (r+1)\ln t -1 - h(t) + Z_\delta (t) - Z_\delta^\infty ) + O \left(\frac{1}{t^{ r+2 - \frac{1}{2} \varepsilon}}\right) \\
 & = &  U_* ( (r+1)\ln t -1 - h(t)   ) + O \left(\frac{1}{t^{ r+2 - \frac{1}{2} \varepsilon}}\right) \\
& =&  \frac{e^1 e^{h(t)} }{t^{r+1}}  +  O \left(\frac{1}{t^{ r+2 - \frac{1}{2} \varepsilon}}\right) \\
& =&  \frac{e^1 e^{h(t)} }{t^{3/2 - \varepsilon}}  +  O \left(\frac{1}{t^{ \frac{5}{2} - \frac{3}{2} \varepsilon}}\right)
\end{eqnarray*}
as $t \to +\infty$. %Notice that we also used the fact that $U_* ' (z) \sim - e^{-z}$ as $z \to +\infty$, which is obtained simultaneously with\eqref{eq:asymp00} by standard ODE theory~\cite{CoddingtonLevinson}. 
On the other hand,
\begin{eqnarray*}
\overline{u}_2 (t, 2t + \ln t -1)& =&  t^\varepsilon  \left( 1 - \frac{C}{t^{1/4}} \right) \frac{e^{1}}{t^{3/2}}    e^{- \frac{( \frac{3}{2} \ln t - 1)^2 }{ 4t}}   \\
& & \qquad  \times \left[ 1  - 2 \frac{ \frac{3}{2} \ln t -1 }{t^{1-\varepsilon}} +  \frac{\left( \frac{3}{2} \ln t -1 \right)^2}{t^{3/2 - \varepsilon} }\right]  \\
& = &    \frac{e^{1} e^{h(t)} }{t^{3/2 - \varepsilon }}    e^{ \frac{3 \ln t}{4t} -\frac{ 1 }{ 4t}}   \\
&& \qquad + \frac{e^1}{t^{3/2 - \varepsilon}}\left( 1 - \frac{C}{t^{1/4}} \right) \left[ \frac{2}{t^{1-\varepsilon}} +    \frac{-3 \ln t +1} {t^{3/2 - \varepsilon}} \right] e^{- \frac{\left( \frac{3}{2} \ln t -1 \right)^2 }{ 4t}} \\
& > & \frac{e^{1} e^{h(t)}}{t^{3/2 - \varepsilon}}  + \frac{ e^{1}}{t^{5/2 - 2 \varepsilon}} ,
\end{eqnarray*}
where as usual the inequality holds at large times. Notice that we used the explicit formula~\eqref{eq:explicit_rev} from Lemma~\ref{lem:fast_super_ansatz22} for $\overline{u}_2$, since $0 \leq \frac{3}{2} \ln t - 1 \leq  \sqrt{t}$ for large enough $t$.

It follows that
$$\overline{u}_2 (t, 2t + \ln t -1) > \overline{u}_1 (t , 2t + \ln t -1).$$
By similar computations, we get
$$\overline{u}_1  ( 2t  + \ln t + 1 ) = \frac{e^{-1} e^{h(t)}}{t^{3/2 - \varepsilon}} + O \left(\frac{1}{t^{\frac{5}{2} - \frac{3}{2} \varepsilon}}\right),$$
and
\begin{eqnarray*}
\overline{u}_2 (t, 2t +  \ln t +1) & =&  t^\varepsilon  \left( 1 - \frac{C}{t^{1/4}} \right) \frac{e^{-1}}{t^{3/2}}    e^{- \frac{\left(\frac{3}{2} \ln t + 1 \right)^2 }{4t}}   \\
& & \qquad  \times \left[ 1  - 2 \frac{ \frac{3}{2} \ln t +1}{t^{1-\varepsilon}} +  \frac{\left(\frac{3}{2} \ln t + 1 \right)^2}{t^{3/2 - \varepsilon} }\right]  \\
& = &    \frac{e^{-1} e^{h(t)} }{t^{3/2 - \varepsilon }}    e^{- \frac{3 \ln t}{4t }} e^{ - \frac{1}{ 4t}}   \\
&& \qquad + \frac{e^{-1}}{t^{3/2 - \varepsilon}}\left( 1 - \frac{C}{t^{1/4}} \right) \left[ - \frac{2}{t^{1-\varepsilon}} +  \frac{3 \ln t +1} {t^{3/2 - \varepsilon}} \right] e^{- \frac{\left(\frac{3}{2} \ln t + 1 \right)^2 }{4t}} \\
& < &  \frac{e^{-1} e^{h(t)}}{t^{3/2 - \varepsilon}} - \frac{e^{-1}}{ t^{5/2 - 2 \varepsilon}} .
\end{eqnarray*}
Thus $\overline{u}_2 (t , 2t + \ln t + 1) < \overline{u}_1 (t , 2t + \ln t +1)$ for any large enough~$t$.

We have finally obtained the following (generalized) supersolution, for all $t \geq T$ and~$x \in \R$:
$$\overline{u} (t,x) = \left\{\begin{array}{ll}
 \overline{u}_1 (t,x) & \mbox{ if } x \leq 2t + \ln t -1  , \vspace{3pt}\\
 \min \{ \overline{u}_1 (t,x)  ,\overline{u}_2 (t,x) \}  & \mbox{ if } 2t +  \ln t -1 < x < 2t + \ln t + 1, \vspace{3pt}\\
\overline{u}_2 (t,x) & \mbox{ if } x \geq 2t + \ln t  +1  .
\end{array}
\right.$$
Furthermore, it follows from the definition of $\overline{u}_1$ in \eqref{def:overlineu1} that there exists $X_2 >0$ such that $\overline{u} (T,x) \geq 1$ for any $x \leq - X_2$. Next, from the fact that $\overline{u}_1$ is also defined by~\eqref{eq:fast_super_ansatz1bis}, where $h(t)$ and $Z_\delta (t)$ are bounded, and $U_r$ is positive on the left of the point~$Z_0(t)$ with~\eqref{eq:ansatz1_decay2}, we infer that $\overline{u}_1$ is positive for $x \leq 2t + \ln t +1$. Since $\overline{u}_2$ is also positive by construction, we conclude that~$\overline{u}$ satisfies statement~$(ii)$ of Proposition~\ref{fast_supersol}.

Next, it follows from the definition of~$\overline{u}_1$ in~\eqref{def:overlineu1} and the fact that $h(t)$, $Z_\delta (t)$ are bounded, that for any $\lambda \in (0,1)$, there exists $X_1 >0$ such that
$$\forall x < 2t - r \ln t - X_1, \quad \overline{u} (t,x) >\lambda.$$
Lastly, by construction~$U_r (\cdot ; t)$ converges locally uniformly to (a shift of) $U_*$ as $t \to +\infty$; indeed recall~\eqref{revised_decay0}. Then, by~\eqref{eq:Z0_neg_deriv00} and~\eqref{eq:ansatz1_decay2}, we also have that~$\partial_z U_r (z;t) <0$ for any $0 \leq z \leq (r+2) \ln t$. From the definition of~$\overline{u}_1$ in~\eqref{eq:fast_super_ansatz1bis}, together with the fact that $\overline{u}_2 (t,x)$ goes to $0$ as $t \to +\infty$ uniformly with respect to $x \geq 2t + \ln t - 1$ thanks to \eqref{eq:lem:fast_super_ansatz21_upper}, we infer that for any $\lambda \in (0,1)$, there exists $X_1 >0$ such that
$$\forall x > 2t - r \ln t + X_1, \quad \overline{u} (t,x)< \lambda.$$
Therefore statement $(i)$ of Proposition~\ref{fast_supersol} holds true too. This concludes the proof of Proposition~\ref{fast_supersol}.

\subsection{The slow decay case: a supersolution}\label{sec:slow_super}

For any $f$ satisfying the monostable assumption~\eqref{eq:monostable}, it is possible to find a KPP type nonlinearity~$\tilde{f}$ such that $\tilde{f} ' (0) = f' (0)$ and $\tilde{f} (s) \leq f(s)$ for any $s \geq 0$. In particular, the lower estimate on the position of the level sets in statement~$(i)$ of Theorem~\ref{th:main} immediately follows from a comparison principle and the celebrated result of~\cite{Bramson,HNRR}. We refer to Section~\ref{sec:comparison} for the details.

Therefore, in the slow decay case it is enough to construct a supersolution. In this section we assume (up to some shift) that
\begin{equation}\label{ass:slowdecay_super}
U_* (z) = (z + A) e^{-z} + O ( e^{-(2-\eta) z} ), \quad U_* ' (z) = - (z+A -1)e^{-z}  + O (e^{-(2-\eta)z},
\end{equation} 
as $z \to +\infty$, where $\eta >0$ is arbitrarily small. Notice that, since we are shifting the front to make the constant $B$ in~\eqref{eq:asymp} equal to $1$, the constant $A \in \R$ is now also fixed. Then we prove the following:
\begin{prop}\label{slow_supersol}
There exist $T>0$ and $\overline{u} (t,x)$ a supersolution of~\eqref{eq:main} on $[T,+\infty) \times \R$ such that:
\begin{itemize}
\item[$(i)$] for any $\lambda \in (0,1)$, there exists $X_1 >0$ such that, for any $t$ large enough,
$$\forall x < 2 t - \frac{3}{2} \ln t - X_1, \quad \overline{u} (t,x) > \lambda,$$
$$\forall x > 2 t - \frac{3}{2} \ln t + X_1, \quad \overline{u} (t,x) < \lambda ;$$
%the level set $\overline{E}_\lambda (t)$ of $\overline{u}$ is located around $2t - r \ln t$, in the sense that there exists $X_1 >0$ such that
%$$ \overline{E}_\lambda (t) \subset \left[ 2t - r \ln t - X_1, 2t - r \ln t + X_1 \right], $$
%for any $t$ large enough;
\item[$(ii)$] the function $\overline{u}$ is positive on $[T,+\infty) \times \R$ and there exists $X_2 > 0 $ such that
$$\forall x \leq -X_2 , \quad \overline{u} (T,x) \geq 1.$$
\end{itemize}
\end{prop}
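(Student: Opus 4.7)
The plan is to follow the gluing strategy of Proposition~\ref{fast_supersol}, with adjustments dictated by the slow decay asymptotics $U_*(z) = (z+A) e^{-z} + O(e^{-(2-\eta)z})$: the drift is now the KPP value $\frac{3}{2}\ln t$, and in the self-similar analysis it is the Dirichlet eigenfunction $y e^{-y^2/8}$ that must play the dominant role (as in~\cite{HNRR}), rather than the Neumann eigenfunction as in Subsection~\ref{sec:fast_super}. The supersolution $\overline{u}$ is again obtained by taking the minimum of a left piece $\overline{u}_1$ and a right piece $\overline{u}_2$ in a bounded window around some matching point.

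For the left piece, Lemmas~\ref{lem:fast_super_ansatz1} and~\ref{lem:fast_super_ansatz1bis} are stated for arbitrary $r > 0$ and any $h$ with $h'(t) = o(1/t)$; taking $r = 3/2$ I would set
$$\overline{u}_1(t,x) := U_*\bigl(x - 2t + \tfrac{3}{2}\ln t - h(t)\bigr) + \tfrac{\gamma}{t}$$
on $\{x \leq 2t - \frac{3}{2}\ln t + h(t) - Z_\delta(t)\}$, and extend it to the right by $U_{3/2}(x - 2t + \frac{3}{2}\ln t - h(t) + Z_\delta(t); t)$, where $U_{3/2}(\,\cdot\,;t)$ solves~\eqref{newODE} at slowed speed $2 - 3/t$ with boundary data matching $U_*(\cdot - Z_\delta(t))$ at $0$. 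Applying Lemma~\ref{lem:approx} with $\varepsilon = 3/t$ gives $|U_{3/2}(z;t) - U_*(z - Z_\delta^\infty)| \leq (3 e^{-z}/t)^{1-\eta}$ uniformly in $z \geq 0$, so $\overline{u}_1$ carries the profile $(z+A)e^{-z}$ in the moving frame up to a subleading error. For the right piece, I would pass to $z = x - 2t + \frac{3}{2}\ln t$ and then to the self-similar variables $\tau = \ln t$, $y = z/\sqrt{t}$ via $u = e^{-z} v$, $v = \sqrt{t}\, e^{-y^2/8}\, w$, which reduces the linearized equation to
$$\partial_\tau w = \mathcal{L} w - \tfrac{3}{2\sqrt{t}}\bigl(\partial_y w - \tfrac{y}{4} w\bigr), \qquad \mathcal{L} w := \partial_{yy} w - \bigl(\tfrac{y^2}{16} - \tfrac{3}{4}\bigr)w.$$
The target profile $(z+A)e^{-z} e^{-z^2/(4t)}$ corresponds in $w$-variables to $y e^{-y^2/8} + A t^{-1/2} e^{-y^2/8}$, whose first (Dirichlet, eigenvalue $0$) term is dominant and whose second is an $O(t^{-1/2})$ Neumann corrector producing the additive constant $A$. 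As in Lemma~\ref{lem:fast_super_ansatz22}, one must add a $y^2 e^{-y^2/8}$-type positivity corrector in the spirit of~\eqref{eq:defw_supersol0}, together with a multiplicative prefactor $(1 - Ct^{-\alpha})$ for some $C,\alpha > 0$, so that the resulting function $\overline{u}_2(t,x)$ is a genuine supersolution of the nonlinear equation~\eqref{eq:main}.

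The matching step then proceeds as in the calculation following~\eqref{ht_fast1}: one chooses $h(t) = O(t^{-\alpha}) + O((\ln t)^2/t)$ with $h'(t) = o(1/t)$ so that $\overline{u}_1(t,\cdot)$ and $\overline{u}_2(t,\cdot)$ cross exactly twice in a bounded window around the matching point (which now lies at bounded distance from $2t$, since both pieces behave like $(z+A)e^{-z}$ at that scale), and taking their minimum on the overlap yields a generalized supersolution. Properties~$(i)$ and~$(ii)$ then follow from the monotonicity of $U_*$ and the Gaussian decay of $\overline{u}_2$. \emph{The main obstacle} will be the construction of $\overline{u}_2$ as an actual supersolution. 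In the fast decay case the dominant Neumann mode was free to come with either sign, which made adding a steeper Dirichlet corrector easy. Here the dominant Dirichlet mode is positive, which forces the nonautonomous term $\frac{3}{2\sqrt{t}}(\partial_y w - \frac{y}{4}w)$ to enter with the wrong sign; moreover the natural Neumann corrector $A t^{-1/2} e^{-y^2/8}$ is itself a weak subsolution. Turning the residual positive without distorting the $(z+A)e^{-z}$ matching profile at $z = O(1)$ will require a careful combination of higher-order Hermite-like modes of $\mathcal{L}$ paired with a prefactor slightly less than $1$, and is the technical heart of the argument.
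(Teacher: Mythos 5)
Your high-level plan (glue a left traveling-wave-type piece with a right self-similar piece, with the Dirichlet mode dominant) matches the paper, and you have correctly diagnosed where the difficulty lies. However, the key step — the construction of $\overline{u}_2$ — is precisely the part you leave unresolved, and the route you sketch is not the one the paper takes. You try to build a supersolution of the $w$-equation from an explicit linear combination of eigenfunctions (Dirichlet, Neumann, and a $y^2e^{-y^2/8}$ corrector), in parallel with the fast-decay construction~\eqref{eq:defw_supersol0}, and you correctly observe that the nonautonomous term $-\frac{3+2\varepsilon}{2}e^{-\tau/2}(\partial_y w - \frac{y}{4}w)$ then acts with the wrong sign on the dominant Dirichlet mode. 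You flag this as "the technical heart" and propose to fix it with "higher-order Hermite-like modes paired with a prefactor slightly less than $1$," but you do not actually produce such a combination, so this is a gap.

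The paper sidesteps this issue entirely. Rather than hand-crafting an explicit ansatz, it defines $\overline{w} := \frac{\beta}{W_1}\tilde{w}$ where $\tilde{w}$ is the \emph{actual solution} of the nonautonomous equation~\eqref{eq:w} with Dirichlet boundary condition at $y=0$ and initial data $ye^{-y^2/8}$, relying on~\eqref{eq:slow_westim_rev} (from \cite[Lemma 2.2]{HNRR}) for its large-time asymptotics and on the upper bound~\eqref{eq:slow_westim_rev2}. Because $\tilde{w}$ is an exact solution there is no sign to check; the only remaining work for $\overline{u}_2$ is the $(1-Ct^{-1/4})$ prefactor absorbing the quadratic nonlinearity, exactly as in Lemma~\ref{lem:fast_super_ansatz22}. (Note that even in the fast-decay case the explicit ansatz $w_0$ is only used on $[0,3]$ and glued to the exact solution beyond — the explicit-ansatz route fails at large $y$ there too.)

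Two further discrepancies. First, you keep the right-piece moving frame at $z = x-2t+\frac{3}{2}\ln t$; the paper instead slows it to $z = x - 2t + \frac{3+2\varepsilon}{2}\ln t$ with $\varepsilon>0$ small. This $\varepsilon$-shift, together with a prefactor $\beta$ chosen in the narrow window~\eqref{eq:beta_last}, is what makes the two matching inequalities ($\overline{u}_2 > \overline{u}_1$ at $2t+\ln t$ and $\overline{u}_2 < \overline{u}_1$ at $2t+\frac{3}{2}\ln t$) hold simultaneously; without it there is no slack in the coefficients $\frac{5}{2}$ vs.\ $\frac{5+2\varepsilon}{2}\beta$ and $3$ vs.\ $(3+\varepsilon)\beta$. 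Second, your claim that the matching point "now lies at bounded distance from $2t$" is incorrect: the paper matches on the moving window $[2t+\ln t, 2t+\frac{3}{2}\ln t]$, which drifts to $+\infty$; the values being compared there are of order $t^{-5/2}\ln t$ and $t^{-3}\ln t$, not $O(1)$. Finally, you keep the drift function $h(t)$ in $\overline{u}_1$; the paper sets $h\equiv 0$ in the slow-decay case, the compensating freedom being supplied by $\beta$ and $\varepsilon$ instead.
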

The general strategy remains the same as in Section~\ref{sec:fast_super}, and in particular the left supersolution is constructed in the same way. More precisely, we define (using the same notation for simplicity)
$$\overline{u}_1 (t,x) = \left\{
\begin{array}{ll}
U_* (x - 2t + \frac{3}{2} \ln t  ) + \frac{\gamma}{t} & \mbox{ if } t>0, \ x \leq 2t - \frac{3}{2} \ln t   -Z_\delta (t),  \vspace{3pt}\\
U_{\frac{3}{2}} (x - 2t + \frac{3}{2} \ln t   + Z_\delta (t) ; t) & \mbox{ if } t >0, \ x > 2t - \frac{3}{2} \ln t    -Z_\delta (t),
\end{array}
\right.
$$
where we recall that $\gamma >0$,  $Z_\delta (t)$ is such that $U_* (-Z_\delta (t)) = 1-\delta - \frac{\gamma}{t}$, $U_r (\cdot; t)$ solves the ODE~\eqref{newODE} and~\eqref{newODE_boundary}. Notice that, compared with Section~\ref{sec:fast_super}, here we choose $r= 3/2$, so that eventually we will recover the exact logarithmic drift, and also $h( t) \equiv 0$.
\begin{lem}\label{lem:slow_ansatz1}
There exist $\gamma >0$ and $T>0$ so that~$\overline{u}_1$ is a positive supersolution of~\eqref{eq:main} for $t \geq T$ and $x < 2t -\frac{3}{2} \ln t   -  Z_\delta (t) + Z_0 (t)$.

Moreover, the function $Z_0 (t)$ satisfies, for any $t \geq T$,
$$Z_0 (t) > \frac{7}{2} \ln t ,$$
and there holds
$$\sup_{ x \geq 2t + \ln t  + Z_\delta^\infty - Z_\delta (t) -2} \left|\overline{u}_1 (t, x ) - U_* \left(x  -2t + \frac{3}{2} \ln t  + Z_\delta (t) - Z_\delta^\infty \right) \right| = O \left( t^{- \frac{7}{2} (1-\eta)}  \right),$$
as $t \to +\infty$, with $\eta >0$ arbitrarily small and 
$$
Z_\delta (t) - Z_\delta^\infty = O \left( \frac{1}{t} \right).
$$
\end{lem}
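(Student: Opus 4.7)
\medskip

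\noindent\textbf{Proof plan.} The plan is to specialize the constructions from Section~\ref{sec:fast_super} to $r = \frac{3}{2}$ and $h \equiv 0$: for this choice, $\overline{u}_1$ is given by the very same formulas as~\eqref{def:overlineu1} and~\eqref{eq:fast_super_ansatz1bis}. The hypothesis $h'(t) = o(1/t)$ required by Lemmas~\ref{lem:fast_super_ansatz1} and~\ref{lem:fast_super_ansatz1bis} is trivially satisfied, and the proofs of those two lemmas never use the precise $+\infty$-asymptotics of $U_*$---they only rely on $f'(1) < 0$, on the monotonicity of $U_*$, and on the sliding argument yielding Claim~\eqref{claim00}. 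The supersolution property on $\{t \geq T,\ x < 2t - \frac{3}{2}\ln t - Z_\delta(t) + Z_0(t)\}$ therefore transfers verbatim. Positivity is then immediate: on the left piece from $U_* > 0$ and $\gamma/t > 0$, and on the right piece from $U_{3/2}(z;t) > 0$ for $z \in (0, Z_0(t))$ by the very definition of $Z_0$ in~\eqref{eq:Z0_neg_deriv00}--\eqref{eq:Z0_neg00}.

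Next, the estimate $Z_\delta(t) - Z_\delta^\infty = O(1/t)$ follows from the implicit function theorem applied to $U_*(-Z_\delta(t)) = 1 - \delta - \gamma/t$, exactly as in the derivation of~\eqref{claim00_add_useful}; only the $C^1$-regularity and strict monotonicity of $U_*$ near $-Z_\delta^\infty$ are used, which hold equally in the slow decay regime. Consequently $U_*'(-Z_\delta(t)) - U_*'(-Z_\delta^\infty) = O(1/t)$, so that the hypotheses of Lemma~\ref{lem:approx} are satisfied with $U := U_*(\cdot - Z_\delta^\infty)$, $U_\varepsilon := U_{3/2}(\cdot;t)$ and $\varepsilon := 3/t$; note that the slow decay asymptotics~\eqref{ass:slowdecay_super} are precisely of the form~\eqref{approx_eq1}. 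The lemma then yields, for any $\eta \in (0, 1/2)$ and for $t$ large enough,
$$|U_{3/2}(z;t) - U_*(z - Z_\delta^\infty)| \leq (3/t)^{1-\eta}\, e^{-(1-\eta) z}, \quad z \geq 0.$$

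The key new input is the lower bound $Z_0(t) > \frac{7}{2}\ln t$. Evaluating the previous inequality at $z = Z_0(t)$ and using $U_{3/2}(Z_0(t); t) = 0$ gives $U_*(Z_0(t) - Z_\delta^\infty) \leq (3/t)^{1-\eta} e^{-(1-\eta)Z_0(t)}$. The slow decay asymptotics~\eqref{ass:slowdecay_super} force the left-hand side to be at least of order $Z_0(t)\, e^{-Z_0(t)}$, so that after taking logarithms one obtains $\eta\, Z_0(t) \geq (1-\eta)\ln t + \ln Z_0(t) + O(1)$. Choosing $\eta < 2/9$ in Lemma~\ref{lem:approx} (which is allowed by the arbitrariness of $\eta$) makes $(1-\eta)/\eta > 7/2$, and therefore $Z_0(t) > \frac{7}{2}\ln t$ for $t$ large. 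For the uniform bound on $\overline{u}_1(t,x) - U_*(x - 2t + \frac{3}{2}\ln t + Z_\delta(t) - Z_\delta^\infty)$, the change of variable $z = x - 2t + \frac{3}{2}\ln t + Z_\delta(t)$ transforms the range $x \geq 2t + \ln t + Z_\delta^\infty - Z_\delta(t) - 2$ into $z \geq \frac{5}{2}\ln t + Z_\delta^\infty - 2$, and the displayed estimate then gives an upper bound $(3/t)^{1-\eta} e^{-(1-\eta)(5 \ln t/2 + Z_\delta^\infty - 2)} = O(t^{-7(1-\eta)/2})$, which is exactly the claimed decay. I expect the main obstacle to be precisely the contradiction argument bounding $Z_0(t)$ from below: the extra $(z + A)$ factor coming from the slow decay regime introduces a $\ln Z_0(t)$ correction in the log inequality which must be absorbed against the main $(1-\eta)\ln t$ term by tuning $\eta$ small but strictly positive, in contrast to the fast decay situation of Section~\ref{sec:fast_super}.
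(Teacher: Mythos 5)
Your proposal is correct and is essentially the argument the paper intends: the paper's own proof is a one-line remark that one should repeat the constructions of Section~\ref{sec:fast_super} with $r=3/2$ and $h\equiv 0$, referring in particular to~\eqref{claim00_add_useful}, \eqref{eq:ansatz1_decay1} and~\eqref{eq:ansatz1_decay2}, and you have correctly carried out that specialization in detail. In particular you are right that Lemmas~\ref{lem:fast_super_ansatz1} and~\ref{lem:fast_super_ansatz1bis} depend only on $f'(1)<0$, monotonicity of $U_*$ and the sliding argument~\eqref{claim00}, so they transfer verbatim; that the slow decay asymptotics~\eqref{ass:slowdecay_super} are precisely of the form~\eqref{approx_eq1} required by Lemma~\ref{lem:approx} applied with $\varepsilon=3/t$; and that the extra factor $z+A$ only strengthens the lower bound on $Z_0(t)$, so that the same threshold $\eta<1/(r+3)=2/9$ as in the fast decay case yields $Z_0(t)>\tfrac{7}{2}\ln t$. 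The final change of variables and the evaluation of the bound at $z=\tfrac{5}{2}\ln t + Z_\delta^\infty - 2$ give exactly the claimed $O(t^{-\frac72(1-\eta)})$.
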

\begin{proof}
The proof is the same as in Section~\ref{sec:fast_super} and thus we omit the details. We refer in particular to~\eqref{claim00_add_useful}, \eqref{eq:ansatz1_decay1} and~\eqref{eq:ansatz1_decay2}.\end{proof}

Let us then focus the discussion on the right supersolution. We first place ourselves in the moving frame with a logarithmic drift, more precisely in the moving frame around $2t - \left( \frac{3}{2} + \varepsilon \right) \ln t$ with $\varepsilon >0$ to be made arbitrarily small. Of course this coincides with the expected drift when $\varepsilon =0$, but we need a slight gap with the position of the level sets of~$\overline{u}_1$ in order to later match our left and right supersolutions. We obtain the linearized equation
$$\partial_t u = \partial_{zz}  u + \left( 2 - \frac{3 + 2 \varepsilon}{2t} \right)  \partial_z u + u.$$
Letting $u= e^{-z} v$ and switching to the self-similar variables $\tau = \ln t$ and $y = \frac{z}{\sqrt{t}}$, we get
$$\partial_\tau v = \partial_{yy} v +  \frac{y}{2} \partial_y v + \frac{3 +2 \varepsilon}{2} v - \frac{3 + 2 \varepsilon}{2} e^{- \frac{\tau}{2}} \partial_y v .$$
Then we let
$$v = w \times e^{-\frac{y^2}{8}} e^{ \frac{ \tau}{2}} e^{\varepsilon \tau},$$
so that
\begin{equation}\label{eq:w}
\partial_\tau w = \partial_{yy} w - \left(\frac{y^2}{16} -\frac{3}{4} \right)w  - \frac{3+ 2 \varepsilon}{2} e^{-\frac{\tau}{2}} \left(\partial_y w - \frac{y}{4} w\right).
\end{equation}
Let us now define $\tilde{w}$ the solution of \eqref{eq:w} on the right half-line $\{ y > 0 \}$, together with the Dirichlet boundary condition
$$ \tilde{w} (\tau,0)=0 , $$
for any $ \tau >0$, and the initial data
$$ \tilde{w} (0, y)= y e^{-\frac{y^2}{8}},$$
for $y >0$. We point out that this initial condition is none other than the first Dirichlet eigenfunction of the autonomous part, which is the same as in Sections~\ref{sec:fast_sub} and~\ref{sec:fast_super}; see~\eqref{eq:nonautonomous}. It is a byproduct of the proof of~\cite[Lemma~2.2]{HNRR} that there exists some $W_1 >0$, and for any bounded interval $[0,L]$ there exists some $K(L) >0$, such that
\begin{equation}\label{eq:slow_westim_rev} \left| \tilde{w} (\tau, y) -  W_1 y e^{- \frac{y^2}{8}} \right|  \leq K (L) y e^{-\frac{\tau}{2}},
\end{equation}
for any $\tau \geq 1$ and $y \in [0,L]$. We also refer to Appendix~\ref{appen} for a similar proof of the corresponding result in the case of a Neumann boundary condition. We further point out that $ e^{\tau} e^{y^2/8}$ is a supersolution of~\eqref{eq:w}, thus by the comparison principle, we get that
\begin{equation}\label{eq:slow_westim_rev2}
\tilde{w} (\tau , y) \leq e^\tau e^{y^2 /8},
\end{equation}
for any $\tau \geq 0$ and $y \geq 0$.

Now let us denote
$$\overline{w} (\tau, y ) := \frac{\beta}{W_1}  \tilde{w} (\tau,y),$$
%$$\overline{w} (\tau, y ) := \frac{\beta}{W_1} e^{\varepsilon \tau} \tilde{w} (\tau,y),$$
with 
\begin{equation}\label{eq:beta_last}
\frac{1}{1 + \frac{2 \varepsilon}{5}} <   \beta < \frac{1}{1 + \frac{\varepsilon}{3}}.
\end{equation}
%\begin{equation}\label{eq:beta_last}
%1 - \frac{2\varepsilon}{5} < \beta < 1 - \frac{\varepsilon}{3}.
%\end{equation}
Notice that $\overline{w}$ still solves~\eqref{eq:w}. The choice of the factor~$\beta$ slightly less than~$1$ will ensure that the spatial derivative of the right supersolution is slightly less than the spatial derivative of the left supersolution~$\overline{u}_1$. In particular it will be crucial when merging the two together into a generalized supersolution.
%It can be checked that $\overline{w}$ is a supersolution of~\eqref{eq:w}. 
Equipped with $\overline{w}$ and going back to the original problem in the moving frame of the expected drift, we find a function $\tilde{u} (t,z)$ such that
$$\partial_t \tilde{u} - \partial_{zz} \tilde{u} - \left( 2 -  \frac{3+ 2\varepsilon}{2t} \right) \partial_z \tilde{u} - \tilde{u} \geq 0.$$
More precisely we can now prove the following lemma:
\begin{lem}\label{lem:slow_ansatz2}
For any $\varepsilon \in \left( 0 , \frac{1}{4}  \right)$, there exist $C>0$ and $T>0$ such that the function
$$\overline{u}_2 (t,x) := \left( 1 - \frac{C}{t^{1/4}} \right) \tilde{u} \left(t,x -2t + \frac{3 + 2 \varepsilon}{2} \ln t \right),$$
is a supersolution of \eqref{eq:main} in the subdomain $t \geq T$ and $x \geq 2t $, where~$\tilde{u}$ is a positive function which satisfies that for any $L>0$, there exists $K(L)>0$ such that
\begin{equation}\label{slow_ansatz_estim}
\left| \tilde{u} (t,z) - \beta t^\varepsilon  z e^{-z} e^{-\frac{z^2}{4t}}  \right| \leq \textcolor{blue}{\frac{ \beta K(L)}{W_1}} \frac{z}{t^{1/2 - \varepsilon}}  e^{-z},
\end{equation}
for any $t \geq e^1$ and $z \in [0,L \sqrt{t}]$, as well as
\begin{equation}\label{slow_ansatz_upperbound}
\tilde{u} (t,z) \leq \frac{\beta}{W_1} t^{\varepsilon+ 3/2} e^{-z}, 
\end{equation}
for any $t \geq 1$ and $z \geq 0$.
\end{lem}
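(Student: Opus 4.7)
My plan is to mirror the argument of Lemma~\ref{lem:fast_super_ansatz22}, but with a noticeably simpler construction since in the slow-decay regime the asymptotic profile of $U_*$ is already encoded by the first Dirichlet eigenfunction $y e^{-y^2/8}$ alone; no mixing of Dirichlet and Neumann components is required. Unwinding the chain of substitutions that led to equation~\eqref{eq:w}, I would set
$$\tilde{u}(t,z) := e^{-z}\, e^{-z^2/(8t)}\, \sqrt{t}\, t^{\varepsilon}\, \overline{w}(\ln t, z/\sqrt{t}) = \frac{\beta}{W_1}\, e^{-z}\, e^{-z^2/(8t)}\, \sqrt{t}\, t^{\varepsilon}\, \tilde{w}\!\left(\ln t,\tfrac{z}{\sqrt{t}}\right).$$
Positivity on $(0,\infty)\times(0,\infty)$ follows from the strong maximum principle applied to $\tilde{w}$, which starts from a nonnegative nontrivial datum under homogeneous Dirichlet conditions at $y=0$. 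Inserting \eqref{eq:slow_westim_rev} into this formula and using $e^{-z^2/(8t)} \leq 1$ delivers the two-sided estimate \eqref{slow_ansatz_estim} after cancellation of the Gaussian factors, while \eqref{eq:slow_westim_rev2} immediately yields \eqref{slow_ansatz_upperbound} since there the factors $e^{-z^2/(8t)}$ and $e^{y^2/8}$ cancel exactly.

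Next, since $\overline{w}$ solves \eqref{eq:w} exactly, reversing the substitution chain shows that $\tilde{u}$ is a classical solution of the linearized equation
$$\partial_t \tilde{u} - \partial_{zz}\tilde{u} - \left(2 - \tfrac{3 + 2\varepsilon}{2t}\right)\partial_z \tilde{u} - \tilde{u} = 0,$$
for $t$ large and $z \geq 0$. Passing to the moving frame $z = x - 2t + \frac{3+2\varepsilon}{2}\ln t$ and bounding $f(u) \leq u + K u^2$ with $K = \|f''\|_\infty$, an elementary calculation analogous to the one in Lemma~\ref{lem:fast_super_ansatz22} gives
$$\partial_t \overline{u}_2 - \partial_{xx}\overline{u}_2 - f(\overline{u}_2) \geq \tilde{u}\!\left(\frac{C}{4 t^{5/4}} - K\tilde{u}\right),$$
the leading gain $\frac{C}{4 t^{5/4}}\tilde{u}$ arising precisely from differentiating the prefactor $1 - C t^{-1/4}$ in time.

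The only remaining point, and the one on which everything hinges, is to check that the bracket above is nonnegative on the subdomain $x \geq 2t$, i.e.\ $z \geq \frac{3+2\varepsilon}{2}\ln t$. Fixing any $L > 0$, on the window $z \in [\tfrac{3+2\varepsilon}{2}\ln t,\, L\sqrt{t}]$ the estimate \eqref{slow_ansatz_estim} is available, and since $z \mapsto z e^{-z}$ is decreasing for $z > 1$, both the principal part and the error term are maximized at the left endpoint, yielding
$$\tilde{u}(t,z) \leq O\!\left(t^{\varepsilon - (3+2\varepsilon)/2}\ln t\right) = O(t^{-3/2}\ln t).$$
For $z \geq L\sqrt{t}$, the upper bound \eqref{slow_ansatz_upperbound} gives $\tilde{u}(t,z) \leq \frac{\beta}{W_1} t^{\varepsilon + 3/2} e^{-L\sqrt{t}}$, which is superexponentially small. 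The hypothesis $\varepsilon < 1/4$ then ensures $K\tilde{u} = O(t^{-3/2}\ln t) \ll \frac{C}{4 t^{5/4}}$ for $t$ large, so up to enlarging $T$ the bracket is nonnegative throughout the relevant subdomain, which finishes the proof. The parameter $\beta$, constrained by \eqref{eq:beta_last}, plays no role in this step and is reserved for the subsequent matching argument with $\overline{u}_1$.
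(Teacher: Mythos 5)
Your proof is correct and follows essentially the same route as the paper's: unwind the substitution chain to define $\tilde{u}$ from $\overline{w}=\frac{\beta}{W_1}\tilde{w}$, deduce \eqref{slow_ansatz_estim} and \eqref{slow_ansatz_upperbound} from \eqref{eq:slow_westim_rev}--\eqref{eq:slow_westim_rev2}, then absorb the quadratic error via the gain $\frac{C}{4t^{5/4}}\tilde{u}$ from the prefactor. The only (harmless) variation is that you work with the exact window $z\geq\frac{3+2\varepsilon}{2}\ln t$ and thereby get the slightly sharper bound $\tilde{u}=O(t^{-3/2}\ln t)$ without invoking $\varepsilon<\tfrac14$ at that step, whereas the paper relaxes to $z\geq\frac{3}{2}\ln t$ and then needs $\varepsilon<\tfrac14$ to conclude $\tilde{u}=o(t^{-5/4})$.
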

\begin{proof}
We proceed as in the proof of Lemma~\ref{lem:fast_super_ansatz22}. First, as we outlined above, we introduce
$$\tilde{u} (t,z) = e^{-z} e^{-\frac{y^2}{8}} e^{\frac{\tau}{2}} e^{\varepsilon \tau} \times \overline{w} (\tau,y),$$
where $\tau = \ln t$, $y = \frac{z}{\sqrt{t}}$, and $\overline{w} (\tau,y) = \frac{\beta}{W_1} \tilde{w} (\tau,y)$ with~$\tilde{w}$ the (positive) solution of~\eqref{eq:w} with Dirichlet boundary condition $\tilde{w} (\cdot,0) \equiv 0$. Then \eqref{slow_ansatz_estim} and \eqref{slow_ansatz_upperbound} immediately follow respectively from~\eqref{eq:slow_westim_rev} and~\eqref{eq:slow_westim_rev2}.

Next, by construction we have that~$\tilde{u}$ satisfies
$$\partial_t \tilde{u} - \partial_{zz} \tilde{u}  - \left( 2 - \frac{3+ 2\varepsilon}{2t } \right) \partial_z \tilde{u}  - \tilde{u}  \geq 0,$$
for any $t \geq 1$ and $z \geq 0$. Letting $K = \| f'' \|_\infty$, we get
\begin{eqnarray*}
 \partial_t \overline{u}_2 - \partial_{xx} \overline{u}_2 -  f(\overline{u}_2) & \geq & \tilde{u} \left( \frac{C}{4 t^{5/4}} - K \tilde{u}\right),
\end{eqnarray*}
where $\tilde{u}$ is evaluated at $(t,x -2t + \frac{3 + 2 \varepsilon }{2 } \ln t)$.

Now, for all $z =x-2t + \frac{3+2\varepsilon}{2} \ln t \in \left[ \frac{3}{2} \ln t, \sqrt{t} \right]$, we can use~\eqref{slow_ansatz_estim} to get that 
\begin{eqnarray*}
\tilde{u} (t,z) & \leq & \beta t^\varepsilon z e^{-z} e^{-\frac{z^2}{4t}} + \frac{\beta K(1)}{W_1} \frac{z}{t^{1/2 -\varepsilon}} e^{-z} \\
 & \leq & \beta \left( t^\varepsilon + \frac{K(1)}{W_1 t^{1/2 - \varepsilon}} \right) z e^{-z} \\
& \leq & \beta \left( t^\varepsilon + \frac{K(1)}{W_1 t^{1/2 - \varepsilon}} \right) \frac{3}{2  t^{3/2}}\ln t  = o \left( t^{-5/4} \right),
\end{eqnarray*}
as $t \to +\infty$. A similar estimate holds for $x - 2t + \frac{3+2\varepsilon}{2} \ln t \geq  \sqrt{t}$ thanks to \eqref{slow_ansatz_upperbound}. It follows that, up to increasing $T$,
$$\partial_t \overline{u}_2 - \partial_{xx} \overline{u}_2 - f (\overline{u}_2) \geq 0 ,$$
on the subdomain $t \geq T$ and $x \geq 2t$. Lemma~\ref{lem:slow_ansatz2} is proved. \end{proof}

Let us now try to match the supersolutions $\overline{u}_1$ and $\overline{u}_2$ in order to conclude the proof of Proposition~\ref{slow_supersol}. We first compute, using \eqref{ass:slowdecay_super} and Lemma~\ref{lem:slow_ansatz1}, that
\begin{eqnarray*}
\overline{u}_1 (t, 2t + \ln t) &=& U_* \left( \frac{5}{2} \ln t + Z_\delta (t) -Z_\delta^\infty  \right) +   O \left( t^{- \frac{7}{2}(1-\eta) } \right)\\
& = &   \frac{ \frac{5}{2}  \ln t   + A}{t^{5/2}}   + O \left( t^{-\frac{7}{2}(1-\eta) }   \right) \\
& = & \frac{ 5 \ln t}{2 t^{5/2 }}   + O \left( \frac{1}{t^{5/2}}\right),
\end{eqnarray*}
as $t \to +\infty$, where $\eta >0$ can be made arbitrarily small. 
%In particular, since $\varepsilon = \frac{3}{2} - r$, we get, as $t \to +\infty$,
%\begin{eqnarray*}
%\overline{u}_1  (t, 2t + \ln t) & = & \frac{ \left(\frac{5}{2} - \varepsilon \right)  \ln t   }{t^{5/2 - \varepsilon} } + o \left( t^{\varepsilon - 5/2} \ln t  \right) .
%\end{eqnarray*}
On the other hand, by~\eqref{slow_ansatz_estim} and the definition of $\overline{u}_2$ in Lemma~\ref{lem:slow_ansatz2},
\begin{eqnarray*}
\overline{u}_2 (t,2t + \ln t) &= &  \left( 1 - \frac{C}{t^{1/4}} \right) \tilde{u} \Big( t,\frac{5 + 2 \varepsilon}{2} \ln t \Big) \\
& = & \left( 1 - \frac{C}{t^{1/4}} \right)\times  \beta  t^\varepsilon \frac{5 + 2\varepsilon}{2}  \ln t  \times   t^{-\varepsilon - 5/2}e^{-\frac{(5+ 2\varepsilon)^2 (\ln t)^2}{16t}} + O \left( \frac{\ln t}{t^3} \right)\\
& = & \frac{5 + 2 \varepsilon}{2} \beta \frac{\ln t}{t^{5/2}} + o \left( \frac{\ln t}{t^{5/2}} \right),
\end{eqnarray*}
as $t \to +\infty$. From the left inequality in~\eqref{eq:beta_last}, we have that $\frac{5 +2 \varepsilon}{2} \beta > \frac{5}{2}$,
and then we get that $\overline{u}_2 (t,2t + \ln t ) > \overline{u}_1 (t,2t + \ln t)$ for any $t$ large enough.

Next, we compute 
\begin{eqnarray*}
\overline{u}_1 \left( t, 2t + \frac{3}{2} \ln t \right) &=& U_* \left(3 \ln t + Z_\delta (t) - Z_\delta^\infty \right) +   O \left(  t^{-\frac{7}{2} (1-\eta)}  \right)\\
& = &  \frac{ 3  \ln t   }{t^{3} } + O \left( \frac{1}{t^3}  \right) ,
\end{eqnarray*}
as $t \to +\infty$. On the other hand,
\begin{eqnarray*}
\overline{u}_2 \left( t,2t + \frac{3}{2} \ln t \right) &= &  \left( 1 - \frac{C}{t^{1/4}} \right) \tilde{u} ( t, (3+ \varepsilon) \ln t) \\
& = & \frac{(3+ \varepsilon) \beta  \ln t }{  t^{3 }} + o \left( \frac{\ln t}{t^{3}} \right) ,
\end{eqnarray*}
as $t \to +\infty$. Using~\eqref{eq:beta_last} again, we find that $(3+\varepsilon) \beta < 3 $ and $\overline{u}_2 (t, 2t + \frac{3}{2} \ln t) < \overline{u}_1 (t,2t + \frac{3}{2} \ln t)$ for any $t$ large enough.

We conclude that
$$\overline{u} (t,x) = \left\{\begin{array}{ll}
 \overline{u}_1 (t,x) & \mbox{ if } x \leq 2t  + \ln t  , \vspace{3pt}\\
 \min \{ \overline{u}_1 (t,x)  ,\overline{u}_2 (t,x) \}  & \mbox{ if } 2t + \ln t < x < 2t + \frac{3}{2} \ln t , \vspace{3pt}\\
\overline{u}_2 (t,x) & \mbox{ if } x \geq  2t + \frac{3}{2} \ln t  ,
\end{array}
\right.$$
defines a positive supersolution of \eqref{eq:main} for $t \geq T$ and $x \in \R$. Moreover, the function $\overline{u}$ satisfies the required properties $(i)$ and $(ii)$ of Proposition~\ref{slow_supersol}. Since the argument is the same as in the proof of Proposition~\ref{fast_supersol}, we omit the details. This ends the proof of Proposition~\ref{slow_supersol}.

\section{Concluding the proof of Theorem~\ref{th:main}}\label{sec:comparison}

We are now in a position to end the proof of Theorem~\ref{th:main}. Recall that we assume, without loss of generality and up to some rescaling, that $f'(0)= 1$. 

\subsection{The slow decay case}\label{sec:slowcomp_rev}

We first check statement $(i)$ of Theorem~\ref{th:main}, which is the slow decay case. First notice that there exists some $K>0$ large enough such that
$$\forall s \geq 0, \quad   f (s) \geq \tilde{f} (s) := s - Ks^2.$$
In particular, applying the comparison principle, we get that $u (t,x)$ the solution of \eqref{eq:main}-\eqref{eq:ini0} satisfies
$$u \geq \tilde{u},$$
where $\tilde{u}$ solves
$$\partial_t \tilde{u} = \partial_{xx} \tilde{u} + \tilde{f} (\tilde{u}),$$
with $\tilde{u} (t=0, \cdot) \equiv u_0$. Since $\tilde{f}$ is of the KPP type, we can apply the well-known result from~\cite{Bramson,HNRR} and, together with a comparison principle, we conclude that for any $\lambda \in (0, 1/K)$,
\begin{equation}\label{eq:slow_lower0}
\exists X(\lambda) >0, \quad E_\lambda (t) \subset \left[  2 t - \frac{3}{2} \ln t - X (\lambda) , +\infty \right),
\end{equation}
where $E_\lambda (t)$ is the $\lambda$-level set of $u(t,\cdot)$. 

Now consider $\lambda \in [ 1/K, 1)$ and let us again check~\eqref{eq:slow_lower0}. We proceed by contradiction and assume that there exist sequences $t_n \to +\infty$ and $x_n \in \mathbb{R}$ such that
$$u (t_n, x_n ) = \lambda,$$
and
$$x_n - 2t + \frac{3}{2} \ln t \to - \infty.$$
By standard parabolic estimates, we can extract a subsequence such that $u(t+t_n, x+x_n)$ converges to some entire in time solution~$u_\infty$ of \eqref{eq:main}. Moreover $u_\infty (0,0) = \lambda$ and, from the fact that~\eqref{eq:slow_lower0} holds true for any $ \lambda \in (0, 1/ K)$, we know that
$$ \frac{1}{K} \leq u_\infty \leq 1 ,$$
in $\mathbb{R}^2$. By comparison with the solutions of the ODE $\partial_t v = f(v)$ with $v(0) = \frac{1}{K}$, one may conclude that $u_\infty \equiv 1$, a contradiction. The lower estimate on the position of the $\lambda$-level set is proved.\\

Let us now turn to the upper estimate, and let $\overline{u}$ be the supersolution provided by Proposition~\ref{slow_supersol}. We claim that there exists $X>0$ such that
%\begin{equation}\label{cla_comp00}
$$
u (0, x ) \leq \overline{u} (T,x - X ),
$$
%\end{equation}
for all $x \in \mathbb{R}$. Indeed, it is sufficient to take $X=  X_0 + X_2$, where $X_0$ comes from~\eqref{eq:ini} and $X_2$ from statement~$(ii)$ of Proposition~\ref{slow_supersol}. More precisely, for $x \geq X_0$ we have $u_0 (x) =0 \leq \overline{u} (T,x-X)$, and for $x \leq X_0$ then $x  - X\leq - X_2$ and $\overline{u} (T,x-X)\geq 1 \geq u_0 (x)$.

Thus, by the parabolic comparison principle, for any $t \geq 0$ and $x \in \mathbb{R}$ we have
$$u (t, x) \leq \overline{u} (t + T,x - X).$$
It now follows from statement $(i)$ of Proposition~\ref{slow_supersol} that, for any $\lambda \in (0,1)$, there exists~$X_1$ such that
$$E_\lambda (t) \subset \left( -\infty,  2 (t+T) - \frac{3}{2} \ln (t+T) + X_1  + X\right].$$
This concludes the proof of statement~$(i)$ of Theorem~\ref{th:main}.

\subsection{The fast decay case}

We now turn to statement~$(ii)$ of Theorem~\ref{th:main}. We let $\varepsilon >0$ be arbitrarily small and $\underline{u}$, $\overline{u}$ be the sub and supersolution from Propositions~\ref{fast_subsol} and~\ref{fast_supersol}, respectively with $r \in \left ( \frac{1}{2} , \frac{1+\varepsilon}{2}\right)$ and $r \in \left( \frac{1- \varepsilon}{2}, \frac{1}{2} \right)$.

Similarly as before, it is enough to prove that there exists $X>0$ large enough such that
\begin{equation}\label{eq:comp_final_claim1}
 u (0,x)  \leq \overline{u} (T , x - X),
\end{equation}
as well as $t_0 >0$ such that
\begin{equation}\label{eq:comp_final_claim2}
\underline{u} (T, x  +  X) \leq u (t_0, x ), 
\end{equation}
for all $x \in \mathbb{R}$. Regarding~\eqref{eq:comp_final_claim1}, it is again enough to choose $X\geq X_0 + X_2$, where $X_0$ comes from~\eqref{eq:ini} and $X_2$ from statement~$(ii)$ of Proposition~\ref{fast_supersol}. 

Let us then check~\eqref{eq:comp_final_claim2}. First, recall from~\eqref{eq:ini} that 
$$\liminf_{x \to -\infty} u_0 (x) >0.$$
It follows that
$$\liminf_{x \to - \infty} u(t,x) \geq v(t),$$
for any $t>0$, where $v$ solves $v' = f(v)$ together with $v(0) = \liminf_{x \to -\infty} u_0 (x) >0$, hence 
$$\lim_{t \to +\infty} \liminf_{x \to -\infty} u(t,x) = 1.$$
In particular, by statement~$(ii)$ of Proposition~\ref{fast_subsol} we can choose $t_0 > T$ such that
$$\liminf_{x \to -\infty} u(t_0,x) > \sup_{x \in \mathbb{R}} \underline{u} (T,x),$$
and up to increasing $X_0$ we can assume that
\begin{equation}\label{last_comp1}
\inf_{x \leq -X_0 +1 } u (t_0, x) > \sup_{x \in \mathbb{R}} \underline{u} (T,x).
\end{equation}
Furthermore, there exists $K>0$ large enough such that
$$\partial_t u \geq \partial_{xx} u - Ku,$$
and by the comparison principle,
$$u (t,x) \geq \frac{e^{-Kt}}{\sqrt{4 \pi t}} \int_{\mathbb{R}} u_0 (y)e^{-\frac{(x-y)^2}{4t} }  dy$$
for any $t >0$ and $x \in \mathbb{R}$. Next, using again \eqref{eq:ini} and the fact that
$$\int_z^{+\infty} e^{-s^2} ds \sim \frac{e^{-z^2}}{2z}$$
as $z \to +\infty$, we get that
$$u(t_0,x) \geq \frac{e^{-Kt_0}}{\sqrt{4 \pi t_0}}  \int_{-\infty}^{-X_0} \left( \inf_{z \leq - X_0}u_0 (z ) \right) e^{-\frac{(x-y)^2}{4t_0}} dy \geq \frac{\delta}{x+ X_0} e^{-\frac{(x +X_0)^2}{4t_0}},$$ 
for some $\delta >0$ and any $x \geq -X_0 +1$. On the other hand, from statement~$(ii)$ of Proposition~\ref{fast_subsol}, we have that
$$\exists X_2 >0, \quad \limsup_{x \to +\infty} \frac{(x- X_2) \underline{u} (T,x)}{e^{-(x- X_2)^2 /4T}} < \delta .$$
Thus, for $X \geq X_0 + X_2$ large enough, we have that
$$\underline{u} (T,x+X) \leq \frac{\delta}{x+ X-X_2} e^{-\frac{(x + X- X_2)^2}{4T}}  \leq u (t_0,x),$$
for any $x \geq - X_0 +1$; notice that we used the fact that $t_0 >T$. Together with \eqref{last_comp1}, we get \eqref{eq:comp_final_claim1}.

Finally, with \eqref{eq:comp_final_claim1} and \eqref{eq:comp_final_claim2} in hand, the wanted estimates on $E_\lambda (t)$ follow by the parabolic comparison principle. We omit the details since the argument proceeds almost exactly as in Section~\ref{sec:slowcomp_rev}. Theorem~\ref{th:main} is proved.

\appendix

\section{Appendix: proof of \eqref{eq:appen}}\label{appen}

Here we want to prove the following result.
\begin{prop}\label{prop:app}
Fix a nonnegative and nontrivial function $\tilde{w}_0$ such that
$$\tilde{w}_0 \in L^2 ((0,+\infty)), \quad  [ y \mapsto y \tilde{w}_0 (y) ] \in L^2 ((0,+\infty)).$$
Let also $\tilde{w}$ solve
$$\partial_\tau \tilde{w} -\partial_{yy} \tilde{w} + \left( \frac{y^2}{16} - \frac{3}{4} \right) \tilde{w} + \frac{1}{2} e^{-\frac{\tau }{2}} \left( \partial_y \tilde{w} - \frac{y}{4} \tilde{w} \right) = 0,$$
for $\tau > 0 $ and $y >0$, together with the Neumann boundary condition
$$ \partial_y \tilde{w} (\tau, 0) = 0,$$
for $\tau > 0$, and the initial data
$$\tilde{w} (0, y) = \tilde{w}_0 ,$$
for $y >0$.

Then there exists $W_1 >0$, and for any bounded interval $[0,L]$ there exists $K (L) >0$, such that
$$ \left| \tilde{w}  (\tau,y) -  W_1 e^{-\frac{y^2}{8}}e^{\frac{\tau}{2}}  \right| \leq K(L)   ,$$
for any $\tau \geq 1$, and $y \in [0,L]$.
\end{prop}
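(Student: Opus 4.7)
The strategy is to rescale out the dominant Neumann mode and reduce the claim to a convergence-to-constant statement for an Ornstein--Uhlenbeck semigroup. Set
\[
G(\tau, y) \,:=\, e^{-\tau/2}\,e^{y^2/8}\,\tilde{w}(\tau, y).
\]
A direct computation shows that $G$ satisfies
\[
\partial_\tau G \;=\; \partial_{yy} G - \tfrac{y}{2}\,\partial_y G + e^{-\tau/2}\!\left(\tfrac{y}{4}\,G - \tfrac{1}{2}\,\partial_y G\right),
\]
with Neumann condition $\partial_y G(\tau, 0) = 0$. Since $e^{-y^2/8}$ is the first Neumann eigenfunction of $\mathcal{L} := -\partial_{yy} + y^2/16 - 3/4$ (with eigenvalue $-1/2$), the proposition is equivalent to showing that $G(\tau, y) \to W_1$ uniformly on $[0, L]$ with rate $O(e^{-\tau/2})$; multiplying back by $e^{\tau/2}\,e^{-y^2/8}$ and using that $e^{-y^2/8}$ is bounded on $[0, L]$ then yields the $K(L)$ bound.

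The plan is to work in the Gaussian-weighted Hilbert space $H := L^2((0,\infty), d\mu)$ with $d\mu = \pi^{-1/2}\,e^{-y^2/4}\,dy$. The autonomous generator $\mathcal{A} := -\partial_{yy} + (y/2)\,\partial_y$ is self-adjoint on $H$ under the Neumann condition, with discrete spectrum $\{0, 1, 2, \ldots\}$, kernel spanned by the constants $\mathbf{1}$, and spectral gap $1$ (these facts follow from the conjugation $\mathcal{A} = M_f^{-1}\mathcal{L} M_f + 1/2$ with $f(y) = e^{-y^2/8}$, which maps Neumann eigenfunctions of $\mathcal{L}$ to those of $\mathcal{A}$). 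Decompose $G = \bar G(\tau)\,\mathbf{1} + g(\tau, \cdot)$ with $\bar G(\tau) := \int_0^\infty G\,d\mu$ and $g \perp \mathbf{1}$. Projecting the PDE onto $\mathbf{1}$ by integration against $d\mu$, the bulk terms cancel by self-adjointness of $\mathcal{A}$ (which annihilates $\mathbf{1}$), and after integration by parts the perturbation terms combine so that only the boundary value at $y = 0$ survives, giving the scalar ODE
\[
\dot{\bar G}(\tau) \;=\; \frac{e^{-\tau/2}}{2\sqrt{\pi}}\,G(\tau, 0).
\]

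Next I show that $g$ decays at rate $e^{-\tau/2}$. A crude a priori bound $\|\tilde w(\tau, \cdot)\|_{L^2(dy)} \leq C\,e^{\tau/2}$, available from the spectral resolution of $\mathcal{L}$ on $L^2((0,\infty), dy)$ (the top Neumann eigenvalue being $-1/2$), transfers under the change of variables to the uniform bound $\|G(\tau, \cdot)\|_H \leq C$, and parabolic smoothing for the perturbed OU semigroup upgrades this to $\|G(\tau, \cdot)\|_{H^1(\mu)} \leq C$ for $\tau \geq 1$. The equation for $g$ reads $\partial_\tau g = -\mathcal{A} g + e^{-\tau/2}\,(I - P)\,\mathcal{B}[G]$ with $\mathcal{B} := (y/4) - (1/2)\,\partial_y$ and $P$ the projection onto $\mathbf{1}$. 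An energy estimate in $H$, combined with the spectral gap $\langle \mathcal{A} g, g\rangle_H \geq \|g\|_H^2$ and Cauchy--Schwarz, yields
\[
\tfrac{1}{2}\,\tfrac{d}{d\tau}\,\|g\|_H^2 \;\leq\; -\,\|g\|_H^2 + C\,e^{-\tau/2}\,\|g\|_H,
\]
whence $\|g(\tau, \cdot)\|_H = O(e^{-\tau/2})$ by a Gronwall-type argument. Standard interior parabolic regularity then promotes this to a pointwise bound $\sup_{y \in [0, L]} |g(\tau, y)| \leq C(L)\,e^{-\tau/2}$ for $\tau \geq 1$.

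Inserting this pointwise control of $g(\tau, 0)$ back into the scalar ODE gives $G(\tau, 0) = \bar G(\tau) + O(e^{-\tau/2})$, and $\bar G$ is a priori bounded since $|\bar G| \leq \|G\|_H \leq C$. Hence $\dot{\bar G}(\tau) = O(e^{-\tau/2})$, integration yields a finite limit $W_1 := \lim_{\tau \to \infty} \bar G(\tau)$ with $|\bar G(\tau) - W_1| \leq C\,e^{-\tau/2}$, and combining $|G(\tau, y) - W_1| \leq |g(\tau, y)| + |\bar G(\tau) - W_1| = O(e^{-\tau/2})$ uniformly on $[0, L]$ concludes the proof. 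The main subtlety is the circular coupling between the decay of $g$ and the boundedness of $\bar G$: one first needs a crude uniform bound to seed the energy argument, which then refines to exponential decay of $g$, which finally closes the ODE for $\bar G$. That the Ornstein--Uhlenbeck spectral gap $1$ strictly dominates the perturbation decay rate $1/2$ is essential for the iteration to close without loss in $\tau$.
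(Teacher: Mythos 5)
Your argument is correct and, after the conjugation $G = e^{y^2/8}\phi$ that intertwines the Ornstein--Uhlenbeck generator $\mathcal{A}$ on $L^2(d\mu)$ with the operator $M_1$ on flat $L^2(dy)$ used in the paper, it is essentially the same proof: decompose into ground state plus orthogonal remainder, derive a scalar ODE for the projection from the boundary term at $y=0$, get $O(e^{-\tau/2})$ decay of the remainder from an energy estimate plus the spectral gap $1 > 1/2$, and finish with parabolic regularity. One point worth tightening: the crude a priori bound $\|G(\tau,\cdot)\|_H \leq C$ does not follow from the spectral resolution of $\mathcal{L}$ alone, since the non-self-adjoint perturbation $e^{-\tau/2}\mathcal{B}$ could a priori push the solution above the top eigenvalue; one needs the same energy computation applied to the full solution, which in the paper yields $\partial_\tau \|\phi(\tau,\cdot)\|_{L^2}^2 \leq e^{-\tau/2}\|\phi(\tau,\cdot)\|_{L^2}^2$ and hence uniform boundedness by Gronwall. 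Also, to close the ODE for $\bar G$ you only need $G(\tau,0)$ bounded, which already follows from the uniform $L^2$ bound plus parabolic regularity without first invoking the decay of $g$; the paper establishes the boundedness of $\phi(\tau,0)$ at that earlier stage, so the dependency chain is slightly more linear than your ``circular coupling'' remark suggests.
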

\begin{proof}
First we write
$$\phi (\tau, y)= e^{-\frac{\tau}{2}}  \tilde{w} (\tau,y),$$
which solves
$$\partial_\tau \phi = \partial_{yy}  \phi -  \left( \frac{y^2}{16} - \frac{1}{4} \right) \phi - \frac{1}{2} e^{-\frac{\tau }{2}} \left( \partial_y \phi - \frac{y}{4} \phi \right)  .$$
Similarly as in \cite{HNRR}, we define
$$M_1 \phi := - \partial_{yy} \phi + \left(\frac{y^2}{16} - \frac{1}{4}  \right) \phi,$$
on the space of functions $\phi \in H^1 ((0,+\infty))$ with $y\phi \in L^2 ((0,+\infty))$, and we rewrite the equation as
\begin{equation}\label{eq:app_phi}
\partial_\tau \phi + M_1 \phi = -\frac{1}{2} e^{-\frac{\tau }{2}} \left(\partial_y \phi - \frac{y}{4} \phi \right).
\end{equation}
Actually, accounting for the Neumann boundary condition, we should understand $M_1$ in the appropriate weak formulation, that is
$$\langle M_1  \phi , \psi \rangle  = \int_0^{+\infty} \left( \partial_y \phi  \, \partial_y \psi + \left(\frac{y^2}{16} - \frac{1}{4 } \right) \phi  \psi \right) dy, $$
for any $\psi \in H^1 ((0,+\infty))$ with $y \psi \in L^2 ((0,+\infty))$. Hereafter $\langle \cdot, \cdot \rangle$ denotes the usual $L^2((0,+\infty))$ scalar product. The null space of $M_1$ is generated by the first unit eigenfunction
$$e_0 (y) := \frac{1}{(2\pi)^{1/4}} e^{-\frac{y^2}{8}},$$
while the second eigenvalue is 1, which is associated with the eigenfunction $(e^{-\frac{y^2}{4}})'' e^{\frac{y^2}{8}}$, i.e. $\left( \frac{y^2}{4} - \frac{1}{2} \right) e^{-\frac{y^2}{8}}$. Higher order eigenfunctions are also given by Hermite polynomials and they form an orthonormal basis of $L^2 ((0,+\infty))$. It follows that the quadratic form
$$Q_1 (\psi) := \langle M_1 \psi, \psi \rangle =  \int_0^{+\infty} \left( (\partial_y \psi)^2 + \left(\frac{y^2}{16} - \frac{1}{4 }\right) \psi^2\right) dy, $$
satisfies
\begin{equation}\label{eq:app_Qperp}
 Q_1 (\psi) \geq \| \psi \|_{L^2}^2 \qquad \text{in $e_0^{\perp}$}.
\end{equation}
We point out that, for conciseness, we choose to denote $\| \cdot \|_{L^2}$ instead  of $\| \cdot\|_{L^2 ((0,+\infty))}$. Multiplying~\eqref{eq:app_phi} by $\phi$ and integrating by parts, we find
\begin{eqnarray*}
\partial_\tau \| \phi (\tau, \cdot) \|_{L^2}^2 + 2 Q_1 (\phi (\tau,\cdot)) &=&  e^{-\frac{\tau }{2}} \int_0^{+\infty} \left( \frac{y}{4}\phi (\tau,y)^2  -\phi (\tau,y) \partial_y \phi (\tau,y) \right) dy. 
\end{eqnarray*}
Using that
$$\frac{y}{4} \phi (\tau,y)^2 = \left( \frac{y}{2\sqrt{2}} \phi (\tau,y ) \right) \left( \frac{\phi (\tau,y)}{\sqrt{2}} \right) \leq \frac{y^2}{16} \phi (\tau,y)^2 + \frac{ \phi (\tau,y)^2}{4},$$
and
$$- \phi (\tau,y) \partial_y \phi (\tau,y) = \left( - \frac{\phi (\tau,y) }{\sqrt{2}} \right) \left( \sqrt{2} \partial_y \phi (\tau,y) \right) \leq \frac{\phi (\tau,y)^2}{4} + (\partial_y \phi (\tau,y))^2,$$
we get
\begin{eqnarray*}
\partial_\tau \| \phi (\tau, \cdot) \|_{L^2}^2 + 2 Q_1 (\phi (\tau,\cdot)) 
& \leq & e^{-\frac{\tau }{2}} \int_0^{+\infty} \bigg( \frac{y^2}{16}\phi (\tau,y)^2 + \frac{1}{4} \phi(\tau,y)^2   \\
& &  \hspace{1cm} + \frac{1}{4} \phi (\tau,y)^2 + (\partial_y \phi (\tau,y))^2 \bigg) dy \\
& \leq & e^{-\frac{\tau }{2}} \left( Q_1 (\phi (\tau,\cdot))  +  \|  \phi (\tau,\cdot) \|_{L^2}^2 \right).
\end{eqnarray*}
Thus
\begin{eqnarray*}
\partial_\tau \| \phi (\tau, \cdot ) \|_{L^2}^2 & \leq  &  e^{-\frac{\tau }{2}}  \| \phi (\tau,\cdot)\|_{L^2}^2 ,
\end{eqnarray*}
and $\| \phi (\tau,\cdot)\|_{L^2}$ is bounded uniformly with respect to $\tau \geq 0$. By parabolic regularity, so is $\phi (\tau,0)$.

Now set
$$\phi (\tau,y) = \langle e_0 , \phi  (\tau,\cdot) \rangle e_0 (y) + \tilde{\phi} (\tau,y), \quad \text{ with } \tilde{\phi} (\tau,\cdot) \in e_0^{\perp}.$$
Then $\phi_1 (\tau) := \langle e_0 , \phi (\tau,\cdot) \rangle$ satisfies 
\begin{eqnarray*}
\phi'_1 (\tau) & = &  \langle e_0 , \partial_\tau \phi \rangle \\
& = & \langle   e_0 , -M_1 \phi \rangle -\frac{1}{2} e^{-\frac{\tau}{2}} \langle e_0 , \partial_y \phi - \frac{y}{4} \phi \rangle \\
& = & -\langle M_1  e_0 , \phi \rangle -\frac{1}{2} e^{-\frac{\tau}{2}} \langle e_0 , \partial_y \phi - \frac{y}{4} \phi \rangle \\
& = & -\frac{1}{2} e^{-\frac{\tau}{2}} \langle e_0 , \partial_y \phi - \frac{y}{4} \phi \rangle \\
& =  & \frac{1}{2}e^{-\frac{\tau}{2}} \left( \langle \partial_y e_0 + \frac{y}{4} e_0 , \phi (\tau, \cdot) \rangle + e_0 (0) \phi (\tau,0) \right) \\
& = & \frac{e^{-\frac{\tau }{2}}}{2(2\pi)^{1/4}} \phi (\tau,0).
\end{eqnarray*}
Due to the time-uniform bound on~$\phi (\tau,0)$, we get that $\phi'_1$ is integrable. It is also positive and therefore there exists 
\begin{equation}\label{app:lim0}
\phi_1^\infty := \lim_{\tau \to +\infty} \phi_1 (\tau) >0,
\end{equation}
(notice indeed that $\phi_1 (0)$ is positive by the fact that $\tilde{w}_0$ is nonnegative and nontrivial). Moreover, there also exists $K_1 >0 $ such that
\begin{equation}\label{app:first_estim}
| \phi_1 (\tau) - \phi_1^\infty | = \int_\tau^{+\infty} \frac{e^{-\frac{s}{2}}}{2 (2 \pi)^{1/4}} \phi (s,0) ds <  K_1 e^{-\frac{\tau}{2}},
\end{equation}
for all $\tau \geq 0$.

On the other hand, $\tilde{\phi} (\tau,y)$ satisfies, together with the Neumann boundary condition,
\begin{eqnarray*}\partial_\tau \tilde{\phi} + M_1 \tilde{\phi}
& = & \partial_\tau \phi + M_1 \phi - \phi'_1 (\tau) e_0 \\
 & =  & - \frac{1}{2} e^{-\frac{\tau}{2}} \left(  \partial_y \tilde{\phi} - \frac{y}{4} \tilde{\phi} + \langle e_0, \phi (\tau,\cdot) \rangle \left( \partial_y e_0 - \frac{y}{4} e_0 \right) + \frac{1}{(2\pi)^{1/4}} \phi (\tau,0 ) e_0  \right).
\end{eqnarray*}
We multiply by $\tilde{\phi}$ and integrate to get:
\begin{eqnarray*}
\partial_\tau \| \tilde{\phi} (\tau, \cdot) \|_{L^2}^2 + 2 Q_1 (\tilde{\phi} (\tau,\cdot)) 
& = &  -  e^{-\frac{\tau }{2}}  \Big[  \langle \tilde{\phi} (\tau,\cdot), \partial_y \tilde{\phi} (\tau,\cdot) \rangle - \langle \frac{y}{4} \tilde{\phi} (\tau,\cdot) , \tilde{\phi} (\tau,\cdot) \rangle \\
& & \ +  \langle e_0, \phi (\tau,\cdot) \rangle \langle  \partial_y e_0 - \frac{y}{4} e_0 , \tilde{\phi}(\tau,\cdot) \rangle  \\
& & + \frac{1}{ (2\pi)^{1/4}} \phi (\tau,0) \langle e_0 , \tilde{\phi} (\tau,\cdot) \rangle \Big].
\end{eqnarray*}
Similarly as before, we have
\begin{eqnarray*}
- \langle \tilde{\phi} (\tau,\cdot), \partial_y \tilde{\phi} (\tau, \cdot) \rangle + \langle \frac{y}{4} \tilde{\phi} (\tau,\cdot), \tilde{\phi} (\tau,\cdot)\rangle  & = & \int_0^{+\infty} \left( \frac{y}{4} \tilde{\phi} (\tau,y)^2 - \tilde{\phi} (\tau, y) \partial_y \tilde{\phi} (\tau,y)\right) dy\\
& \leq & \int_0^{+\infty} \bigg( \frac{y^2}{16} \tilde{\phi} (\tau,y)^2 + \frac{1}{4} \tilde{\phi} (\tau,y)^2 \\
& & \hspace{1cm} + \frac{1}{4} \tilde{\phi} (\tau, y)^2 + ( \partial_y \tilde{\phi} (\tau,y))^2 \bigg) dy\\
& \leq & Q_1 (\tilde{\phi} (\tau,\cdot)) + \| \tilde{\phi} (\tau,\cdot) \|_{L^2}^2.
\end{eqnarray*}
Thus, and using also the fact that $\phi (\tau, 0)$ and the $L^2$-norm of $\phi (\tau,\cdot)$ are bounded uniformly with respect to time,
\begin{eqnarray*}
\partial_\tau \| \tilde{\phi} (\tau, \cdot) \|_{L^2}^2 + 2 Q_1 (\tilde{\phi} (\tau,\cdot)) 
&  \leq  &  e^{-\frac{\tau}{2}} \left(  Q_1 (\tilde{\phi} (\tau,\cdot)) + \| \tilde{\phi} (\tau,\cdot) \|_{L^2}^2   +  C  \| \tilde{\phi} (\tau,\cdot) \|_{L^2} \right),
\end{eqnarray*}
for some constant $C>0$. Then, recalling that $\tilde{\phi} \in e_0^\perp$ and \eqref{eq:app_Qperp}, we get
\begin{eqnarray*}
\partial_\tau \| \tilde{\phi} (\tau,\cdot) \|_{L^2}^2  & \leq&  ( e^{-\frac{\tau}{2}} -2) Q_1 (\tilde{\phi} (\tau,\cdot)) +   e^{-\frac{\tau}{2}} \| \tilde{\phi} (\tau,\cdot) \|_{L^2}^2  +  C  e^{-\frac{\tau}{2}} \| \tilde{\phi} (\tau,\cdot) \|_{L^2} \\
 & \leq &  ( 2 e^{-\frac{\tau}{2}} -2) \| \tilde{\phi} (\tau,\cdot) \|_{L^2}^2  +  C  e^{-\frac{\tau}{2}} \| \tilde{\phi} (\tau,\cdot) \|_{L^2} .
\end{eqnarray*}
Notice that $\psi (\tau) := C ' e^{-\tau}$ satisfies that
$$\partial_\tau \psi \geq (2 e^{-\frac{\tau}{2}} -2) \psi + C e^{-\frac{\tau}{2}} \sqrt{\psi},$$
for $C' >0$ large enough. It follows by a comparison argument that, for any $\tau \geq 0$,
$$\| \tilde{\phi} (\tau,0)\|_{L^2} \leq  \sqrt{C'}  e^{-\frac{\tau}{2}} ,$$
for some $C' >0$. Recall that by $L^2$-norm we refer here to the $L^2 ((0,+\infty))$-norm. By parabolic regularity, we get on any bounded interval $[0,L]$ and for any $\tau \geq 1$ that
$$\| \tilde{\phi} (\tau,\cdot) \|_{L^\infty ([0,L]) } \leq K_2 (L) e^{-\frac{\tau}{2}},$$
for some $K_2 (L) >0$. Thus, for any $\tau \geq 0$ and $y \in [0,L]$,
$$| \phi (\tau,y) - \phi_1^\infty e_0 (y) | \leq | \phi_1 (\tau) - \phi_1^\infty | e_0 (y) + K_2 (L) e^{-\frac{\tau}{2}},$$
where we recall from~\eqref{app:lim0} that $\phi_1^\infty = \lim_{\tau \to +\infty} \langle e_0 , \phi (\tau,\cdot) \rangle$. Using also~\eqref{app:first_estim}, the fact that $e_0 (y) \leq \frac{1}{(2 \pi)^{1/4}}$ and letting 
$$K (L) = \frac{K_1}{(2\pi)^{1/4}} + K_2 (L),$$
 we get
$$| \phi (\tau,y) - \phi_1^\infty e_0 (y) | \leq  K (L) e^{-\frac{\tau}{2}}.$$
Finally, rewriting the previous inequality in terms of $\tilde{w} (\tau,y) = e^{\frac{\tau}{2}} \phi (\tau,y)$, replacing~$e_0$ by its definition $\frac{1}{(2\pi)^{1/4}} e^{-\frac{y^2}{8}}$ and denoting $W_1 = \frac{ \phi_1^\infty}{(2 \pi)^{1/4}}$ , we obtain the wanted conclusion. Proposition~\ref{prop:app} is proved.
\end{proof}

\section*{Acknowledgements}

The author is grateful to Maolin Zhou for raising the issue considered in this article, and also for fruitful discussions throughout completion of this manuscript. Part of this work was carried out under the framework of ANR-21-CE40-0008 ``Indyana''.

\end{document}